\newtheorem{thm}{Theorem}[section]
\newtheorem{thm*}{Theorem}
\newtheorem{lem}[thm]{Lemma}
\newtheorem{cor}[thm]{Corollary}
\newtheorem*{conj}{Leinster-Willerton conjecture}
\newtheorem{prop}[thm]{Proposition}
\theoremstyle{remark}
\newtheorem{remark}[thm]{Remark}
\newtheorem{example}[thm]{Example}
\theoremstyle{definition}
\newtheorem{deef}[thm]{Definition}
\newtheorem{problem}{Problem}
\newcommand{\R}{\mathbb{R}}
\newcommand{\C}{\mathbb{C}}
\newcommand{\N}{\mathbb{N}}
\newcommand{\rd}{\mathrm{d}}
\renewcommand{\epsilon}{\varepsilon}
\newcommand{\e}{\mathrm{e}}
\title{The magnitude and spectral geometry}
\author{Heiko Gimperlein, Magnus Goffeng, Nikoletta Louca}
\address{Heiko Gimperlein\newline
\indent Leopold-Franzens-Universit\"{a}t Innsbruck\newline
\indent Technikerstraße 13 \newline
\indent 6020 Innsbruck\newline
\indent Austria \newline
\newline
\indent Nikoletta Louca\newline
\indent Maxwell Institute for Mathematical Sciences and \newline
\indent Department of Mathematics, Heriot-Watt University\newline
\indent Edinburgh EH14 4AS\newline
\indent United Kingdom\newline
\newline
\indent Magnus Goffeng,\newline
\indent Centre for Mathematical Sciences\newline 
\indent Lund University\newline 
\indent Box 118, SE-221 00 Lund\newline 
\indent Sweden\newline}
\subjclass[2010]{}
\keywords{51F99 (primary), 58J50, 52A39, 58J40, 31C12 (secondary)}
\email{heiko.gimperlein@uibk.ac.at, nl24@hw.ac.uk, magnus.goffeng@math.lth.se}
\begin{document}
\maketitle

\begin{abstract}
We study the geometric significance of Leinster's notion of magnitude for a smooth manifold with boundary of arbitrary dimension, motivated by open questions for the unit disk in $\mathbb{R}^2$. For a large class of distance functions, including embedded submanifolds of Euclidean space and Riemannian manifolds satisfying a technical condition, we show that the magnitude function is well defined for $R\gg 0$ and admits a meromorphic continuation to sectors in $\C$. We obtain an asymptotic variant of the convex magnitude conjecture by Leinster and Willerton: In the limit $R \to \infty$ the magnitude function admits an asymptotic expansion, which determines the volume, surface area and integrals of generalized curvatures. Lower-order terms are computed by black box computer algebra. As a consequence, we initiate the study of magnitude analogues to classical questions in spectral geometry. 
\end{abstract}

\section{Introduction}

The notion of \emph{magnitude} of an enriched category, and specifically for a compact metric space, was introduced by Leinster \cite{leinster} to capture the ``essential size'' of an object. The magnitude has been shown to generalize the cardinality of a set and the Euler characteristic, and it is even closely related to measures of the diversity of a biological system. See \cite{leinmeck} for an overview.

For a finite metric space $(X,\rd)$, we say that $w : X \to \R$ is a weight function provided that $\sum_{y \in X} \mathrm{e}^{-\rd(x,y)}w(y) =1$ for all $x \in X$. Given a weight function $w$, the magnitude of a finite metric space $X$ is defined as $\mathrm{mag}(X) := \sum_{x \in X} w(x)$. The magnitude $\mathrm{mag}(X)$ is independent of the choice of $w$. More generally, for a compact, positive definite metric space $(X,\rd)$ as in \cite{meckes1}, the magnitude is defined as
\begin{equation}\label{supdefinition}\mathrm{mag}(X) := \sup\{\mathrm{mag}(\Xi) : \Xi \subset X\ \ \text{finite}\}\ .\end{equation}

This article finds a geometric origin of magnitude for smooth, compact manifolds with boundary $X$ with a suitable distance function $\rd$. In particular, it provides a framework for the analysis and explicit computations of magnitude when $X \subset \mathbb{R}^n$ is a compact domain, a long-standing problem when $n$ is even. The framework combines classical results in semiclassical analysis with current results for pseudodifferential boundary problems. As an application, we initiate the study of magnitude analogues to classical questions in spectral geometry and prove an asymptotic variant of the Leinster-Willerton conjecture.\\

Instead of the magnitude of an individual space $(X,\rd)$, it proves fruitful to study the function $\mathcal{M}_X(R) := \mathrm{mag}(X,R\cdot \rd)$ for $R>0$. 
For a compact, convex subset $X\subset \mathbb{R}^n$, Leinster and Willerton \cite{leinwill} conjectured a surprising relation between magnitude and classical objects in convex geometry, the intrinsic volumes $V_k(X)$:

\begin{conj}
Suppose $X\subset \mathbb{R}^n$ is compact and convex. Then 
\begin{align*}
\mathcal{M}_X(R) =& \frac{1}{n! \omega_n} \mathrm{vol}_n(X)\ R^n +  \frac{1}{2(n-1)! \omega_{n-1}} \mathrm{vol}_{n-1}(\partial X)\ R^{n-1} + \cdots + 1 \\ 
=& \sum_{k=0}^n\frac{1}{k! \omega_k} V_k(X)\ R^k \ .
\end{align*}
Here, $\omega_k$ is the volume of the $k$-dimensional unit ball. 
\end{conj}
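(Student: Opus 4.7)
The plan is to realise $\mathcal{M}_X(R)$ analytically as an integral of the weight distribution $h_R$ characterised by $\int_X \e^{-R|x-y|}h_R(y)\,\rd y = 1$ for $x \in X$, so that $\mathcal{M}_X(R) = \int_X h_R\,\rd x$. Since $\e^{-R|\cdot|}$ is, up to constants, the Green's function of a Bessel-potential operator of order $n+1$ on $\R^n$, this recasts the conjecture as a pseudodifferential boundary value problem on $X$ to which the semiclassical calculus developed in the body of the paper applies.

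First I would combine the meromorphic continuation and the semiclassical expansion announced in the introduction to write, as $R \to \infty$,
\[
\mathcal{M}_X(R) \;=\; \sum_{k=0}^{N} c_k(X)\,R^{\,n-k} \;+\; O\bigl(R^{\,n-N-1}\bigr),
\]
where each $c_k(X)$ is an integral of a universal polynomial in the metric on $X$ and, for $k\geq 1$, in the second fundamental form of $\partial X$. The abstract already identifies $c_0$ with $\tfrac{1}{n!\omega_n}\mathrm{vol}_n(X)$ and $c_1$ with $\tfrac{1}{2(n-1)!\omega_{n-1}}\mathrm{vol}_{n-1}(\partial X)$.

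Second, for convex $X$, I would identify each remaining $c_k$ with an intrinsic volume by a Hadwiger-type argument: continuity, rigid-motion invariance, and the valuation-like additivity inherited from inclusion-exclusion in the semiclassical limit force $c_k$ to be a linear combination of $V_0,\dots,V_n$. Comparison with the closed-form evaluations known on intervals and on odd-dimensional Euclidean balls (Barcel\'o--Carbery) then pins down the constants as $\tfrac{1}{k!\omega_k}$.

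The hardest step, and the reason the full conjecture remains open, is to show that the expansion \emph{terminates} at $k=n$, i.e.\ that $\mathcal{M}_X(R)$ is a polynomial of degree $n$ in $R$ for convex $X$. This requires exploiting convexity in a global way that the local symbol expansion does not see; the natural route is to express $\mathcal{M}_X(R)$ as a Laplace transform of the volume of the parallel bodies $X_t = \{x \in \R^n : \mathrm{dist}(x,X) \leq t\}$, for which Steiner's formula forces polynomial dependence on $t$ and hence the exact rational structure in $R$ predicted by the conjecture. Making this link rigorous at all orders is what I expect to be the principal obstacle; without it one recovers precisely the asymptotic variant of the conjecture that the paper sets out to prove.
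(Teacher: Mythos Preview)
The statement you are trying to prove is not a theorem of the paper at all: it is the \emph{Leinster--Willerton conjecture}, and the paper records in the introduction that it has been \emph{disproved}. Barcel\'o and Carbery computed that $\mathcal{M}_{B_5}$ is a rational function of $R$, not a polynomial, so for the unit ball in $\R^5$ the conjectured identity fails. No proof strategy can succeed, and the paper makes no attempt to supply one; it proves instead an \emph{asymptotic} version (the expansion $\mathcal{M}_X(R)\sim \frac{1}{n!\omega_n}\sum_j c_j(X)R^{n-j}$) together with the identification of the first few $c_j$.

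Beyond this basic point, two specific steps in your outline are independently flawed. First, your Hadwiger-type argument for identifying $c_k$ with $\tfrac{1}{k!\omega_k}V_k$ already breaks at $k=3$: the paper (and \cite{gimpgoff4th}) shows that $c_3(X)$ is a nonzero multiple of the Willmore energy $\int_{\partial X}H^2\,\rd S$, which is \emph{not} an intrinsic volume and not a valuation on convex bodies in the Hadwiger sense. The asymptotic inclusion--exclusion you invoke holds only modulo $O(R^{-\infty})$ and does not upgrade to the exact additivity that Hadwiger's theorem requires. Second, your proposed route to termination---writing $\mathcal{M}_X(R)$ as a Laplace transform of $t\mapsto \mathrm{vol}_n(X_t)$ and invoking Steiner's formula---has no basis: there is no such representation of the magnitude. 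The operator $\mathcal{Z}_X(R)$ involves $\e^{-R|x-y|}$ integrated over $X\times X$, not over parallel bodies, and the weight distribution $u_R$ is supported in $X$ and highly singular at $\partial X$; there is no known transform taking the Steiner polynomial to $\mathcal{M}_X(R)$. Indeed, precisely because $\mathcal{M}_{B_5}(R)$ is non-polynomial while the Steiner polynomial of $B_5$ is a degree-$5$ polynomial, any such identity is impossible.
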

This formula would imply an inclusion-exclusion principle $\mathcal{M}_X + \mathcal{M}_Y = \mathcal{M}_{X\cup Y}+\mathcal{M}_{X\cap Y}$ for magnitude, a fundamental property of  the Euler characteristic and the other intrinsic volumes. See \cite{barcarbs, leinwill} for further motivation. 

The Leinster-Willerton conjecture is easily verified for an interval $X \subset \mathbb{R}$, where $\mathcal{M}_X(R) = \frac{\mathrm{Length}(X)}{2} R + \chi(X)$. More generally, it holds for convex bodies $X\subset \mathbb{R}^n$ with the $\ell_1$, instead of the Euclidean norm \cite{leinmeck}. 
In dimension $n=5$, Barcel\'o and Carbery \cite{barcarbs} gave a counterexample to the original formulation of the Leinster-Willerton conjecture: $\mathcal{M}_{B_5}$ is a rational function, not a polynomial. In odd dimensions their work related magnitude to a differential boundary problem, which allowed two of the authors \cite{gimpgoff} to prove a corrected, asymptotic formulation of the conjecture. It identified classical geometric invariants beyond $V_k(X)$, which are encoded by $\mathcal{M}_{X}$ when $X \subset \mathbb{R}^n$ is a compact, smooth domain, $n$ odd: The magnitude function $\mathcal{M}_X$ extends to a meromorphic function and admits an asymptotic expansion $$\mathcal{M}_X(R) \sim \frac{1}{n! \omega_n} \sum_{j=0}^\infty c_j(X)\ R^{n-j}\ \  \text{ for $R \to +\infty,$}$$ in the sense that for any $N\in \N$, 
$\mathcal{M}_X(R)-\sum_{j=0}^N c_j(X)R^{n-j} =\mathcal{O}(R^{n-N-1})$ as $R\to +\infty$. Furthermore, for $j=0,1,2$ there exist $\gamma_{n,j} \in \mathbb{Q}$ independent of $X$ such that $c_j(X) = \gamma_{n,j} V_j(X)$ for convex $X$. The fourth term, $c_3(X)$, is by the work  \cite{gimpgoff4th} proportional to the Willmore energy $\int_{\partial X} H^2$, a geometric quantity not predicted by the Leinster-Willerton conjecture.

In this article we extend the results of \cite{gimpgoff} to arbitrary dimensions and to geometric settings, using a new, unified approach. For the unit disk $B_2 \subset \mathbb{R}^2$ we address long-standing questions about the form of $\mathcal{M}_{B_2}$: We find 
$$\mathcal{M}_{B_2}(R) = \frac{1}{2}R^2 + \frac{3}{2}R  + \frac{9}{8}+O(R^{-1}),$$ 
and additionally that $\mathcal{M}_{B_2}(R)$ is not a polynomial. More generally, our approach leads to an algorithm to calculate higher $c_j(X)$, which has been implemented in python. We here state the result in the simplest setting of smooth, compact, planar domains.

\begin{thm}
\label{diskthm}
Let $X \subset \mathbb{R}^2$ be a smooth, compact domain.
\begin{enumerate}
\item[a)] $\mathcal{M}_X$ admits a meromorphic continuation to $\mathbb{C}\setminus \{0\}$.
\item[b)] There exists an asymptotic expansion
\begin{equation*}
\mathcal{M}_X(R)\ \sim \ \frac{1}{2\pi} \sum_{j=0}^\infty c_j(X) R^{2-j} \quad \mbox{as $R \to +\infty$}.
\end{equation*}
\item[c)] The first three coefficients are given by
\begin{align*}
&c_0(X)=\textnormal{Area}(X),\  c_1(X)=\frac{3}{2} \textnormal{Perim}(\partial X),\  c_2(X)=\frac{9}{8} \int_{\partial X} H\, \rd S\ ,
\end{align*} 
\end{enumerate}
where $H$ is the mean curvature and $\textnormal{Perim}(\partial X)=\int_{\partial X}  \rd S$ denotes the perimeter of the boundary. 
\end{thm}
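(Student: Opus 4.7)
The plan is to realize $\mathcal{M}_X(R)$ as an observable of an elliptic pseudodifferential boundary problem with semiclassical parameter $R^{-1}$, and to extract assertions (a)--(c) from a semiclassical parametrix construction. The starting point is the observation that in dimension two, the Schwartz kernel $e^{-R|x-y|}$ coincides, up to an explicit constant, with the fundamental solution of a power of $R^2-\Delta$, namely $P_R := c_2 R^{-2}(R^2-\Delta)^{3/2}$ on $\R^2$, in parallel with the odd-dimensional situation of \cite{gimpgoff} where an analogous differential operator appears. Following Barcel\'o--Carbery, the magnitude function is then encoded by a distributional boundary problem: one seeks $u \in L^2(\R^2)$ satisfying $u|_X = 1$ and $P_R u = 0$ on $\R^2 \setminus X$, after which $\mathcal{M}_X(R) = \int_X P_R u \, \rd x$ (interpreted suitably). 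Equivalently, this translates into a boundary problem for $P_R$ on $X$ with a Dirichlet-type trace condition on $\partial X$.

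The operator $(R^2-\Delta)^{3/2}$ is elliptic in the semiclassical calculus with parameter $R^{-1}$, and belongs to a Boutet de Monvel-type algebra adapted to half-integer powers of second-order elliptic operators. I would construct a semiclassical parametrix for $P_R$ with the boundary condition above, uniformly in $R$, by combining the interior parametrix with normal-direction boundary corrections. The trace of the resulting resolvent-type operator then yields, by standard heat/zeta-type arguments, both a meromorphic continuation of $\mathcal{M}_X(R)$ to $\C\setminus\{0\}$, proving (a), and a full asymptotic expansion as in (b), where each coefficient $c_j(X)$ is the integral over $X$ and $\partial X$ of a locally computable polynomial expression in the semiclassical symbols and the extrinsic curvatures of $\partial X$.

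For the explicit values in (c), the interior principal symbol yields the bulk term $c_0(X) = \textnormal{Area}(X)$, matching the leading term predicted by Leinster--Willerton. The next coefficients arise from the semiclassical boundary calculus: $c_1(X)$ is obtained from the leading-order boundary symbol through a one-dimensional reduction normal to $\partial X$, yielding $\tfrac{3}{2}\textnormal{Perim}(\partial X)$, while $c_2(X)$ requires the subprincipal correction that picks up the second fundamental form of $\partial X$, yielding $\tfrac{9}{8}\int_{\partial X} H\,\rd S$. Concretely these reduce to residues of Mellin transforms of the symbol, evaluated by explicit integrals of rational functions of $\sqrt{1+\xi^2}$, and lend themselves to the computer algebra implementation mentioned in the abstract.

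The main obstacle is that $(R^2-\Delta)^{3/2}$ is genuinely pseudodifferential in even dimension, in contrast with \cite{gimpgoff} where the odd-dimensional analogue is differential with a local boundary condition. One must identify the correct Boutet de Monvel class containing $P_R$, verify the transmission condition across $\partial X$, and check that the semiclassical parametrix has symbol expansions with meromorphic dependence on $R$ in order to obtain (a). Once this pseudodifferential framework is in place, the extraction of the individual $c_j$ becomes an algorithmic exercise in semiclassical symbol manipulation that can be automated, which is what makes the computer-algebra computation of higher-order terms feasible.
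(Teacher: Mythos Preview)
Your overall strategy --- realize $\mathcal{M}_X(R)$ through an elliptic pseudodifferential problem with parameter, build a parametrix adapted to the boundary, and read off $c_j$ from local symbol computations --- is the same in spirit as the paper's. But the specific machinery you propose has a genuine obstruction, and the paper circumvents it by working on the other side of the duality.

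The obstruction is precisely the one you flag at the end: $(R^2-\Delta)^{3/2}$ does \emph{not} satisfy the transmission condition at $\partial X$, so the classical Boutet de Monvel calculus does not contain it. This is not a technicality you can ``verify'' away; for half-integer order the symbol $|\xi|^{3}$ fails the parity requirement across $\xi_n=0$, and solutions of the associated Dirichlet-type problem are not smooth up to the boundary but only H\"ormander $\mu$-regular (they behave like $x_n^{\mu-1/2}$ near $\partial X$). A Boutet de Monvel-type parametrix with the standard transmission property therefore does not exist, and the ``heat/zeta-type'' route to meromorphic continuation you sketch does not go through as written.

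The paper instead works directly with the \emph{negative-order} integral operator $\mathcal{Z}_X(R)$, with principal symbol $n!\omega_n(R^2+|\xi|^2)^{-3/2}$, viewed as a map $\dot H^{-3/2}(X)\to \overline H^{3/2}(X)$. Part (a) then follows from the analytic Fredholm theorem: $R\mapsto \mathcal{Z}_X(R)$ is a holomorphic Fredholm family on $\C\setminus\{0\}$ (since $e^{-R|x-y|}$ is entire in $R$ and the operator is elliptic with parameter), invertible for large real $R$ by a G\aa rding-type estimate, hence $\mathcal{Z}_X(R)^{-1}$ is meromorphic. No trace expansion is needed for (a). For (b) and (c), the boundary parametrix is obtained not through Boutet de Monvel but through a Wiener--Hopf factorization of the full symbol in the normal variable $\xi_n$, in the style of Eskin and H\"ormander and using Grubb's recent $\mu$-transmission framework for fractional Laplacians. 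The factors $q_\pm$ live in a mixed-regularity symbol class (decay in $\xi'$ improves only in $\xi'$, not in $\xi_n$), and their inverses $w_\pm$ give the boundary contribution $\int_{\partial X} B_{\rd^2,j}\,\rd S$ to each $c_j$. The explicit constants $3/2$ and $9/8$ come from this symbol calculus applied to $q_0=(R^2+|\xi|^2)^{-3/2}$ in adapted boundary coordinates, not from Mellin residues of the positive-order operator.

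In short: your proposal would work if $(R^2-\Delta)^{3/2}$ had the transmission property, but it does not; the paper's fix is to invert the order-$(-3)$ operator via Wiener--Hopf factorization in Grubb's $\mu$-transmission setting, and to get meromorphy from analytic Fredholm theory rather than from a trace expansion.
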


\begin{remark}
\label{computersays}
The computer code described in Appendix \ref{appecode} leads to $c_3(X) ={\gamma} \int_{\partial X} H^2\, \rd S$, $c_4(X) = {\delta} \int_{\partial X} H^3 \, \rd S$, where $0\neq \gamma, \delta \in \mathbb{Q}$ are constants independent of $X$. In particular, for a smooth, compact domain $X \subset \mathbb{R}^2$, the magnitude function $\mathcal{M}_X$ is not a polynomial. The code is available at \href{http://www.hw.ac.uk/~hg94}{this link}.

The fact that $c_j(X)\propto \int_{\partial X} H^{j-1}\rd S$ for $j=1,2,3$ in odd dimensions (see \cite{gimpgoff4th}), and for $j=1,2,3,4$ in dimension $n=2$, makes it natural to ask if $c_j(X)\propto \int_{\partial X} H^{j-1}\rd S$ for any $n>1$, $j>0$ and any smooth domain $X\subseteq \R^n$. 
\end{remark}

Theorem \ref{diskthm} is a special case of Theorem \ref{Rntheorem}, stated below for Euclidean domains. 
Subsection \ref{subseconasus} discusses extensions of Theorem \ref{diskthm} to manifolds with boundary, with geometric implications the content of Section \ref{sec:geometry}.

The starting point for this work is a reformulation of magnitude by Willerton \cite{will} and Meckes \cite{meckes1, meckes}. Willerton \cite{will} extended the notion of a weight vector to the integral operator ${\mathcal{Z}}_X(R)$ on the space $M(X)$ of finite Borel measures on $X$:  
$$\mathcal{Z}_X(R):M(X)\to C(X), \quad \mathcal{Z}_X(R)\mu(x):=\frac{1}{R}\int_X\mathrm{e}^{-R\mathrm{d}(x,y)}\mathrm{d}\mu(y)\ .$$
A \emph{weight measure} $\mu_R$ is a solution to the equation 
$$R\mathcal{Z}_X(R)\mu_R=1\ .$$ 
If $(X,R\cdot\mathrm{d})$ is positive definite and admits a weight measure, Meckes \cite{meckes1} showed that $\mathcal{M}_X(R)=\mu_R(X)$.
We shall show that for a domain $X\subset \mathbb{R}^n$ the operator $\mathcal{Z}_X(R)$ is a pseudodifferential operator and therefore extends to the Sobolev space $\dot{H}^{-\frac{n+1}{2}}(X)$ of distributions supported in $X$. The equation $R\mathcal{Z}_X(R) u_R =1$ on $X$ admits a unique distributional solution $u_R\in \dot{H}^{-\frac{n+1}{2}}(X)$. By relating this approach to Meckes's work \cite{meckes}, we conclude that $\mathcal{M}_X(R)=\langle u_R,1\rangle_X$.

This pseudodifferential framework allows to use methods from semiclassical analysis to study $\mathcal{M}_X$. A key ingredient is the construction of an approximate inverse to $\mathcal{Z}_X$, based on recent advances for pseudodifferential boundary problems \cite{g3}. The approach extends to subsets of manifolds with a distance function, subject to appropriate technical assumptions.

We note that after the completion of this article related questions have been addressed for generic finite metric spaces \cite{ohararec,roffyoshi}.

\subsection{Acknowledgements}
We thank Magnus Fries, Daniel Grieser, Gerd Grubb and Mark Meckes for useful comments on this article and Tony Carbery, Daniel Grieser, Gerd Grubb, Tom Leinster, Rafe Mazzeo, Mark Meckes, Niels Martin M\o ller, Grigori Rozenblum, Jan-Philip Solovej and Simon Willerton for fruitful discussions.

MG was supported by the Swedish Research Council Grant VR 2018-0350. NL was supported by The Maxwell Institute Graduate School in Analysis and its Applications, a Centre for Doctoral Training funded by the UK Engineering and Physical Sciences Research Council (Grant EP/L016508/01), the Scottish Funding Council, Heriot-Watt University and the University of Edinburgh.

\section{Overview and main results}
\label{sec:resandz}

Let $X$ be  a compact manifold with boundary equipped with a distance function $\rd$ and a volume density $\rd y$. Meckes's abstract capacity-theoretic approach to the  magnitude function $\mathcal{M}_X$ in \cite{meckes} relies on the family of integral operators
\begin{align}\mathcal{Z}_X(R)u(x):=\frac{1}{R} \int_X \e^{-R\rd(x,y)}u(y)\rd y, \quad R\in \C\setminus \{0\},\label{magequaq}
\end{align}
to study $\mathcal{M}_X(R)=R^{-1}\langle \mathcal{Z}_X(R)^{-1}1,1\rangle_X$. Explicit calculations of magnitude have avoided the solution of this integral equation, based on reformulations available for certain homogeneous manifolds \cite{will} and odd-dimensional Euclidean domains \cite{barcarbs,gimpgoff,gimpgoff4th,willoddb}. 

In this article we study the magnitude function in the general setting of a manifold $X$ with boundary endowed with a distance function $\rd$, by making Meckes's approach explicit. We require that $\rd^2(x,y)$ is a smooth function in a small neighborhood of the diagonal $x=y$. The operator $\mathcal{Z}_X(R)$ then turns out to be a parameter-dependent pseudodifferential operator on $X$, up to an error term which is often negligible. This allows to adapt methods from semiclassical analysis and recent developments for pseudodifferential boundary value problems to study  the inverse $\mathcal{Z}_X(R)^{-1}$ and the magnitude function $\mathcal{M}_X(R)=R^{-1}\langle \mathcal{Z}_X(R)^{-1}1,1\rangle_X$ in terms of the geometry of $(X,\rd)$.

The following theorem illustrates our results for the geometric content of the magnitude function. We here state them for the geodesic distance function of a Riemannian metric. See Remark \ref{mrandsmr} or \cite[Section 3]{gimpgofflouc} for a discussion of the properties (MR) and (SMR). 

\begin{thm}
\label{introthm}
Let $X$ be a compact $n$-dimensional Riemannian manifold with boundary equipped with its geodesic distance $\rd$. Let $g$ denote its Riemannian metric and $\rd x$ its associated density. Assume that the geodesic distance has property $(MR)$. Then there exists $R_0> 0$ such that $(X, R\cdot \rd)$ is a positive definite metric space for all $R>R_0$. Its magnitude function  $\mathcal{M}_X$ admits an asymptotic expansion
$$\mathcal{M}_X(R)\ \sim \ \frac{1}{n!\omega_n}\sum_{j=0}^\infty c_j(X) R^{n-j},$$
as $R\to \infty$. The geometric content and structure of the coefficients $c_j$ are as follows:
\begin{itemize}
\item $c_0(X)=\mathrm{vol}_n(X,g)$ 
\item $c_1(X)=\frac{n+1}{2}\mathrm{vol}_{n-1}(\partial X,g)$
\item If $n\geq 2$, $c_2(X)=\frac{n+1}{6}\int_X s\mathrm{d}x+\frac{(n+1)^2(n-1)}{8}\int_{\partial X} H\mathrm{d}S$, where $s$ is the scalar curvature and $H$ the mean curvature of the boundary.
\item For $j\geq 4$, the coefficient $c_j(X)=c_j(X^\circ)+c_j(X,\partial X)$ where $c_j(X^\circ )$ can be computed as an integral over $X$ of a universal polynomial in covariant derivatives of the curvature of total degree $\leq j$ and $c_j(X,\partial X)$ is an integral over $\partial X$ of a universal polynomial in covariant derivatives of the curvature  as well as the second fundamental form of $\partial X$ of total degree $\leq j$. Moreover, $c_j(X^\circ )=0$ for odd $j$.
\end{itemize}
If the distance function has property (SMR) on a sector $\Gamma\subseteq \C$, e.g.~if $\rd^2$ is smooth and $\Gamma=\C\setminus \{0\}$, then $\mathcal{M}_X$ extends meromorphically to $\Gamma$.
\end{thm}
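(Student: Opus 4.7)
The plan is to leverage the pseudodifferential structure of $\mathcal{Z}_X(R)$ emphasised in the introduction, and to reduce the computation of $\mathcal{M}_X(R)=R^{-1}\langle \mathcal{Z}_X(R)^{-1}1,1\rangle_X$ to a parametrix construction in a parameter-dependent Boutet de Monvel-type calculus. Under hypothesis (MR), the squared geodesic distance $q(x,y):=\rd^2(x,y)$ is smooth near the diagonal with Hessian $2g$. Locally in geodesic normal coordinates centred at $x$ one has $\rd(x,y)=|y|$, and using the Fourier transform identity
\[
\e^{-R|y|}\;=\;c_n\,R\int_{\R^n}(R^2+|\xi|^2)^{-\frac{n+1}{2}}\e^{iy\cdot\xi}\,\rd\xi,
\]
one identifies the Schwartz kernel of $R\mathcal{Z}_X(R)$ as an oscillatory integral with a classical parameter-dependent symbol of order $-(n+1)$, whose leading part is $c_n'(g^{ij}\xi_i\xi_j+1)^{-(n+1)/2}$. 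The contribution from a region bounded away from the diagonal, where $\rd$ is only continuous, is exponentially small in $R$ and lies in the residual class of the calculus.

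Second, using the calculus for parameter-dependent pseudodifferential boundary problems of \cite{g3}, one constructs a parametrix for $R\mathcal{Z}_X(R)\colon \dot{H}^{-(n+1)/2}(X)\to C(X)$ uniformly in $R$. Applying this parametrix to the constant function $1$ and pairing with $1$ produces an asymptotic expansion
\[
\mathcal{M}_X(R)\;\sim\;\frac{1}{n!\omega_n}\sum_{j=0}^\infty c_j(X)\,R^{n-j},
\]
in which each coefficient splits as $c_j(X)=c_j(X^\circ)+c_j(X,\partial X)$: the interior part coming from the interior symbol expansion, the boundary part from the Poisson and singular Green components of the parametrix. A standard invariant-theory argument, in the spirit of Gilkey's heat-kernel analysis, forces $c_j(X^\circ)$ to be the integral over $X$ of a universal polynomial in covariant derivatives of the curvature, and $c_j(X,\partial X)$ to be the integral over $\partial X$ of a universal polynomial also involving the second fundamental form, both of total degree at most $j$. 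The vanishing $c_j(X^\circ)=0$ for odd $j$ then follows from a parity statement for the Taylor expansion of $q(x,y)$ in geodesic normal coordinates.

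The explicit form of $c_0,c_1,c_2$ requires only the first few terms of the Taylor expansion of $q(x,y)$. The leading symbol pairing yields $c_0(X)=\mathrm{vol}_n(X,g)$. Working in boundary normal coordinates near $\partial X$, the second fundamental form enters the next-order boundary symbol and produces the mean curvature $H$ after tracing; in interior normal coordinates the Ricci tensor appears at second order in the expansion of $q$, and its trace gives the scalar curvature $s$. The rational prefactors $\tfrac{n+1}{2}$, $\tfrac{n+1}{6}$ and $\tfrac{(n+1)^2(n-1)}{8}$ are then pinned down by specialising to a half-space and to Euclidean balls, and matching with the results of \cite{gimpgoff, gimpgoff4th}.

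Finally, positive definiteness of $(X,R\cdot\rd)$ for $R\gg 0$ follows because the leading symbol of $R\mathcal{Z}_X(R)$ is a strictly positive scalar, so the operator is positive modulo lower-order corrections and remains positive for large $R$; by Meckes's characterisation \cite{meckes1} this lifts to positive definiteness of the metric space. For the meromorphic extension under (SMR), every symbol in the parametrix depends holomorphically on $R\in\Gamma$, and analytic Fredholm theory yields a meromorphic inverse on $\Gamma$, hence a meromorphic $\mathcal{M}_X$. The main obstacle I anticipate is the uniform control of the boundary parametrix in the parameter $R$: one must verify that the full symbolic expansion is genuinely semiclassical in $1/R$ and that the transmission-type conditions at $\partial X$ produce boundary contributions consistent with the half-space model, and this bookkeeping, rather than the identification of any individual coefficient, appears to be the technical heart of the argument.
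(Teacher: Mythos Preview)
Your outline is essentially correct and parallels the paper's strategy: localise $\mathcal{Z}_X$ to a parameter-elliptic pseudodifferential operator $Q_X$ of order $-(n+1)$, obtain positive definiteness for large $R$ via a G\aa rding-type argument on the principal symbol, build a parametrix whose interior and boundary contributions produce the asymptotic expansion, and invoke invariant theory for the structural form of the $c_j$. The meromorphic extension under (SMR) via analytic Fredholm theory is also the paper's argument.

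The main divergence is in the boundary analysis. You invoke a Boutet de Monvel-type calculus with Poisson and singular Green terms, whereas the paper proceeds by an explicit Wiener--Hopf factorisation of the full symbol of $Q^\partial$ near $\partial X$ in the spirit of Eskin and H\"ormander: one writes $q^\partial \sim q_-^\partial \# q_+^\partial$ with $q_\pm^\partial$ holomorphic in $\pm\mathrm{Im}(\xi_n)>0$, inverts each factor separately by a standard parametrix recursion, and reads off the boundary densities $B_{\rd^2,j}$ from the resulting symbols $w_{\pm,j}$. This factorisation lives in a calculus of \emph{mixed-regularity} symbols (improving only in $\xi'$, not in $\xi_n$), which is what actually delivers the uniform semiclassical control in $R$ that you flag as the anticipated obstacle. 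The $\mu$-transmission framework of \cite{g3} you cite is conceptually close, but the paper's implementation is the more elementary Wiener--Hopf route with explicit partial-fraction decompositions; the payoff is a concrete recursion amenable to computer algebra (used for $c_3,c_4$). Two small corrections: the mapping property is $\dot{H}^{-(n+1)/2}(X)\to \overline{H}^{(n+1)/2}(X)$ rather than into $C(X)$; and under (MR) alone the off-diagonal remainder $L(R)$ need not decay exponentially in $R$---exponential decay requires off-diagonal smoothness of $\rd$, while (MR) only guarantees the weaker operator-norm smallness needed for the argument.
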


Theorem \ref{introthm} is a special case of Theorem \ref{symbcorboundary} and Theorem \ref{technicalthmsec6}, which includes extensions to non-Riemannian distance functions. Non-Riemannian distance functions are of interest, for example, for submanifolds of $\mathbb{R}^n$ with the subspace distance \cite{will}. For Euclidean domains and the original setting of the Leinster-Willerton conjecture, additional information is obtained in Theorem \ref{Rntheorem}.\\

Geometric applications are discussed in Section \ref{sec:geometry}, motivated by classical questions in spectral geometry and the category-theoretic orgin of magnitude. For instance, we conclude the following results:
\begin{enumerate}
\item The magnitude recovers the Euler characteristic of surfaces, see Subsection \ref{eulerrel}.
\item The integral form of $c_j$  implies an inclusion-exclusion principle for smooth, compact domains $X,Y,X\cap Y$ in a manifold which has property $(MR)$ (see Subsection \ref{inadiandin}): 
$$\mathcal{M}_{X\cup Y}(R) \sim \mathcal{M}_X(R)+\mathcal{M}_Y(R)-\mathcal{M}_{X\cap Y}(R).$$
\item Analogous to Kac' famous question \cite{kac}, one can ``magnitude the shape of a drum'' for balls: if $X$ is a smooth, compact domain in $\R^n$, $B$ is a ball and $\mathcal{M}_X(R)=\mathcal{M}_{B}(R)+o(R^{n-1})$, then $X$ is isometric to $B$. See Subsection \ref{specgeo1}.
\item Analogous to a theorem by Berger \cite{berger}, magnitude characterizes Euclidean domains whose boundary has constant mean curvature when the dimension $n$ is odd or $n=2$: if $X$ and $Y$ are smooth, compact, $n$-dimensional domains with $\mathcal{M}_X(R)=\mathcal{M}_{Y}(R)+o(R^{n-2})$, then $\partial X$ has constant mean curvature if and only if $\partial Y$ has constant mean curvature. See Subsection \ref{specgeo2}.\\
\end{enumerate}

Let us illustrate Theorem \ref{introthm} and its extensions by asymptotic computations of the magnitude function in classical Riemannian and non-Riemannian geometries. Beyond the intrinsic interest \cite{barcarbs, gimpgoff, leinwill, will, willoddb}, these computations shed light on continuity properties of $X \mapsto\mathcal{M}_X$, as raised by the Leinster-Willerton conjecture, and on the technical assumption (MR).

\begin{example}[Cylinders]
\label{cylexample}
Suppose that $M\subseteq \R^N$ is a compact $n-1$-dimensional submanifold, $T>0$ a parameter and $X_T=M\times [0,T]\subset \R^{N+1}$ is the cylinder on $M$ of height $T$. The space $X_T$ is a compact, $n$-dimensional manifold with boundary, depicted in Figure \ref{plot}(a). Equipping $X_T$ with the subspace distance, Proposition \ref{cylcompmpmad} relates the magnitude of $X_T$ with the magnitude of the base $M$: 
$$\frac{n!\omega_n\mathcal{M}_{X_T}(R)}{(n-1)!\omega_{n-1}\mathcal{M}_{M}(R)}=TR+\frac{n+1}{2}+T\cdot O(1), \quad\mbox{as $R\to \infty$},$$
and in particular
$$n!\omega_{n}\mathcal{M}_{X_T}(R)=\mathrm{vol}_{n-1}(M)\left(TR^n+\frac{n+1}{2}R^{n-1}\right)+T\cdot O(R^{n-2}), \quad\mbox{as $R\to \infty$}.$$ Lower-order terms can be expressed in terms of the geometry of $M$ alone.
\end{example}

\begin{figure}
   \centering
   \subfigure[]{
   \includegraphics[height=4.1cm]{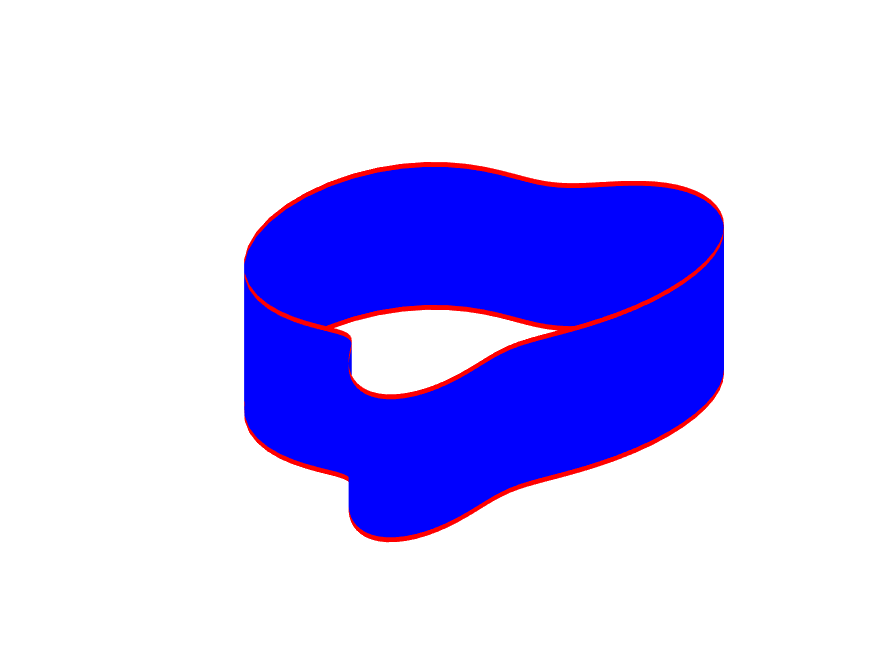}}
   \subfigure[]{
   \includegraphics[height=3cm]{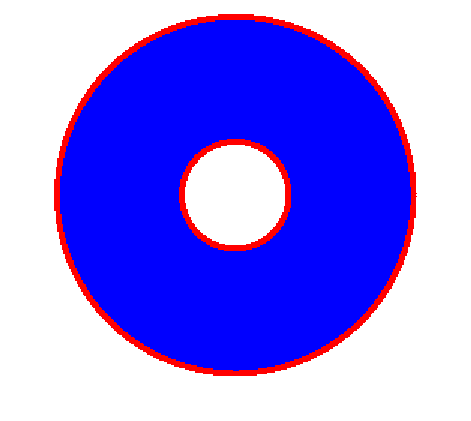}}\\
 \subfigure[]{
  \includegraphics[height=3cm]{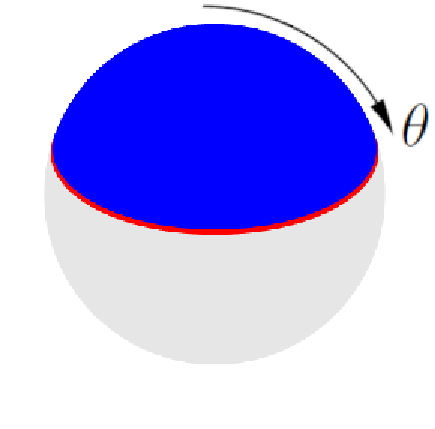}}\hspace*{0.4cm}
\subfigure[]{
  \includegraphics[height=2.5cm]{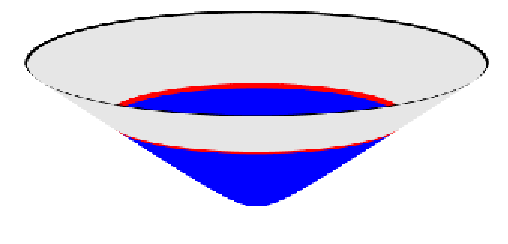}}\hspace*{0.2cm}
\subfigure[]{
  \includegraphics[height=2.7cm]{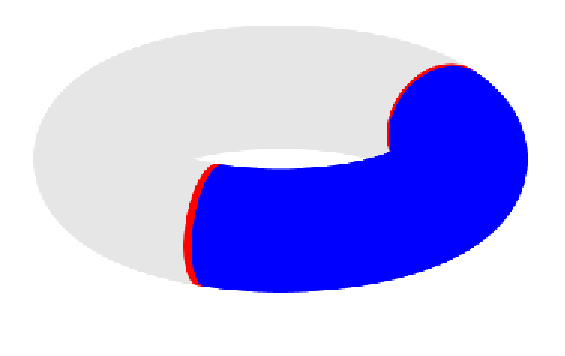}}
  
 \caption{(a) Cylinder, Example \ref{cylexample}, (b) spherical shell, Example \ref{shellexample},\\ (c) spherical cap, Example \ref{capexample}, (d) ball in hyperbolic plane (hyperboloid model), Example \ref{hyperexample}, (e) toroidal armband, Example \ref{partoftorii}.}
\label{plot}
\end{figure}

\begin{example}[Spherical shells]
\label{shellexample}
For $\epsilon\in [0,1]$ and $n=2,3$ consider the family of spherical shells
$$X_\epsilon:=\{x\in \mathbb{R}^n: 1-\epsilon\leq |x|\leq 1+\epsilon\},$$
depicted in Figure \ref{plot}(b). The limit case $\epsilon=0$ corresponds to the unit circle or sphere, $\epsilon=1$ to the ball of radius $2$. 

In dimension $n=2$, Theorem \ref{introthm} shows $$\mathcal{M}_{X_\epsilon}(R) = 2 \epsilon R^2 + 3 R + O(R^{-1})$$
when $\epsilon \in(0,1)$, while $\mathcal{M}_{X_1}(R) = 2R^2 + 3R + \frac{9}{8}+ O(R^{-1})$ and $\mathcal{M}_{X_0}(R) = \frac{\pi R}{1-e^{-\pi R}}\sim \pi R$. In the limit $\epsilon\to 1$, one observes continuity for the coefficients of $R^2$ and of $R$, but the $R^0$ term in the expansion jumps. For $\epsilon\to 0$, the coefficients of $R^2$ and of $R^0$ are continuous, but the coefficient of $R^1$ jumps.

This may be contrasted with the $n=3$-dimensional case, where the full magnitude function can be computed: 
$$\mathcal{M}_{X_\epsilon}(R)=\begin{cases} 
\frac{2R^2+2}{1-\mathrm{e}^{-\pi R}},  &\epsilon=0,\\
\frac{2\epsilon^2+6\epsilon}{3!}R^3+(2\epsilon^2+2)R^2+4\epsilon R+2+\quad&\\
\quad +\frac{\mathrm{e}^{-R(1-\epsilon)}(R^2(1-\epsilon)^2+1)+2R^3(1-\epsilon)^3-3R^2(1-\epsilon)^2+2R(1-\epsilon)-1}{\sinh(2R(1-\epsilon))-2R(1-\epsilon)}, \;&\epsilon\in (0,1)\\
\frac{8}{3!}R^3+4R^2+4R+1, &\epsilon=1. \end{cases}$$
See \cite[Example 36]{gimpgoff} for the very similar computations for a spherical shell $(2B_3)\setminus B_3^\circ$, and \cite{will} for $X_0 = S^2$. Note that 
$$\lim_{\epsilon\to 1}\mathcal{M}_{X_\epsilon}(R)=\mathcal{M}_{X_1}(R)\ ,$$
$$\lim_{\epsilon\to 0}\mathcal{M}_{X_\epsilon}(R)=2R^2+2=\mathcal{M}_{X_0}(R) +O(R^{-\infty}).$$ 
Still, the $R^0$ term in the expansion here is the Euler characteristic of $\chi(X_\epsilon)$, which jumps when the topology changes. For $\epsilon\to 0$ there is a jump of dimension, and the pointwise values correspondingly have a jump. 

Because the family $X_\epsilon$ is continuous in the Hausdorff distance, we conclude that the magnitude function is not continuous with respect to this topology, unlike for convex domains. Nevertheless, the asymptotics takes the same form as $R\to \infty$.
\end{example}

\begin{example}[Sphere]
\label{capexample}
Let $\theta\in [0,\pi]$ and $X_\theta\subseteq S^2$ denote the smooth, compact spherical cap depicted in Figure \ref{plot}(c), 
$$X_\theta := \{(x,y,z)\in S^2\subseteq \R^3 : z\geq \cos(\theta)\}.$$ 
By \cite[Proposition 3.6]{gimpgofflouc}, the compact manifold $S^2$ with its geodesic distance has property (MR), a property inherited by $X_\theta$. We compute that for $\theta\in (0,\pi)$, 
$$ \mathcal{M}_{X_\theta}(R)=(1-\cos(\theta))R^2+\sin(\theta)R+1+\frac{1}{8}\cos(\theta)+O(R^{-1}),$$
using that the sphere $S^2$ has scalar curvature $2$, the mean curvature of $\partial X_\theta$ is $H=-\mathrm{cotan}(\theta)$ and $\mathrm{vol}_2(X_\theta)=2\pi(1-\cos(\theta))$. 
In the limit cases $\theta\in \{0,\pi\}$, $\mathcal{M}_{X_0}(R)=\mathcal{M}_{\rm pt}(R)=1$ and $\mathcal{M}_{X_\pi}(R)=\mathcal{M}_{S^2}(R)=\frac{2R^2+2}{1-\mathrm{e}^{-\pi R}}\sim 2R^2+2$ (see Example \ref{shellexample}). Therefore, the coefficients $c_j$ jump in the limit cases analogous to Example \ref{shellexample}. 
\end{example}

\begin{example}[Hyperbolic space]
\label{hyperexample}
Consider the real hyperbolic $n$-space $M = \mathbb{H}^n$  with its Riemannian distance function $\rd$. Then $\rd^2$ is smooth, so that $M$ and any compact, smooth domain $X \subset M$ have property (MR). 

In dimension $n=3$ let $X_r \subset M$ be a ball of radius $r>0$, as depicted in Figure \ref{plot}(d) using the hyperboloid model of $M$ (for $n=2$). Theorem \ref{introthm} shows 
$$\mathcal{M}_{X_r}(R)=\frac{\sinh(2r)-2r}{8} R^3+\frac{\cosh(2r)-1}{2}R^2+(H_0(r)-1)(\cosh(2r)-1)R+O(1).$$
Indeed, the volume of $X_r$ is $\pi(\sinh(2r)-2r)$, the area of $\partial X_r$ is $2\pi(\cosh(2r)-1)$, $s=-6$ in real hyperbolic $3$-space and $H=H_0(r)$ is the mean curvature of the boundary of the ball, which only depends on $r$ by rotational invariance. 
\end{example}

\begin{example}[Torus]
\label{partoftorii}
For $k,l \in \mathbb{N}$ consider the manifold $M_1=S^k\times \R^l$ and the closely related $M_2=S^k\times \left(\R/2\mathbb{Z}\right)^l$, equipped with their natural geodesic distances. By an extension of the argument for $S^k$ in \cite[Proposition 3.6]{gimpgofflouc}, $M_1$ and any compact, smooth domain $X \subset M_1$ have property (MR). Property (MR) is not satisfied for $M_2$ if $k,l>0$, following \cite[Proposition 3.17]{gimpgofflouc}. Nevertheless, a domain $X \subset M_2$ which is small enough to be contained in the image of $S^k\times [0,1)^l\to M_2$ is isometric to a domain in $M_1$ and therefore has property (MR).

We may therefore use Theorem \ref{introthm} to compute the magnitude function of the armband domain depicted in Figure \ref{plot}(e), $X_\epsilon$ defined as the image of $S^1\times [0,\epsilon]\to M_2$, for $\epsilon\in (0,1)$. We obtain
$$\mathcal{M}_{X_\epsilon}(R)=\epsilon R^2+2R+O(R^{-1}),$$
since $s=0$ and $\int_{\partial X_\epsilon}H\mathrm{d}S=0$ by sign symmetry in the mean curvature. For $\epsilon \geq 1$ property (MR) is not satisfied, and Theorem \ref{introthm} does not apply.  The magnitude function of $\widetilde{X}_\epsilon = S^1\times [0,\epsilon] \subset M_1$ is given by $\mathcal{M}_{\widetilde{X}_\epsilon}(R)=\epsilon R^2+2R+O(R^{-1})$ for all $\epsilon >0$.
\end{example}

Let us now review the ideas and techniques behind the main results, including Theorem \ref{introthm}. As mentioned at the beginning of this section, at the heart of this paper is the analysis of the integral operator  $\mathcal{Z}_X(R)$ from \eqref{magequaq} using pseudodifferential methods, when $X$ is a manifold with boundary.  

Section \ref{sec:meckes} recalls the abstract function space framework introduced by Meckes \cite{meckes} for the magnitude operator $\mathcal{Z}_X(R)$ and connects it to concrete Sobolev spaces of distributions in $X$ and the analytic techniques developed in \cite{gimpgofflouc}. In particular, the approach in this article and in \cite{gimpgofflouc} therefore, indeed, computes the magnitude function.  The key result is Theorem \ref{symbcorboundary}: For $\mu = \frac{n+1}{2}$, $\mathrm{Re}(R)$ sufficiently large and $\mathrm{arg}(R)<\frac{\pi}{n+1}$, the operator $\mathcal{Z}_X(R)$ defines a holomorphic family of isomorphisms between the Sobolev spaces $\dot{H}^{-\mu}(X)$ and $\overline{H}^{\mu}(X)$ of supported, respectively extensible distributions. Results on the meromorphic continuation of $\mathcal{M}_X$ to the complex plane follow from the proof, see Corollary \ref{corofofrdlforx}. The abstract framework persists even when the boundary $\partial X$ is merely $C^0$ (i.e.~locally the graph of a continuous function). 

The idea in the proof is to replace $\mathcal{Z}_X$ with a localization 
$$Q_X(R)u(x):=\frac{1}{R} \int_X \chi(x,y)\e^{-R\rd(x,y)}u(y)\rd y, \quad R\in \C\setminus \{0\}.$$
Here, we fix a function $\chi\in C^\infty(X\times X)$ such that $\chi=1$ on a neighborhood of the diagonal $x=y$ and $\rd^2$ is smooth on the support of $\chi$. We explicitly compute in Theorem \ref{conomkmlog} that the operator $Q_X$ is a lower order perturbation of a fractional Laplacian with parameter $(R^2+\Delta)^{-\mu}$. Under the above stated conditions on $R$, $Q_X$ is invertible as a sufficiently small perturbation of the invertible fractional Laplacian. 

The main results of this article follow from the explicit analysis of the pseudodifferential operator $Q_X$. 
We assume that $\rd^2(x,y)$ is a divergence in the sense of \cite[Section 1.2]{divergenceref}, i.e.~$\rd^2(x,y)$ is a smooth function in a small neighborhood of the diagonal $x=y$, and in local coordinates it has a Taylor expansion (for any $N>0$)
\begin{equation}
\label{talaldadladldaladlda}
\rd(x,x-y)^2=H_{\rd^2}(x,x-y)+\sum_{j=3}^N C^{j}(x;x-y)+O(|x-y|^{N+1}),
\end{equation}
where $H_{\rd^2}(x,\cdot)$ is a Riemannian metric on $X$. In local coordinates $C^{j}$ is a symmetric $j$-form in $x-y$. The Taylor expansion \eqref{talaldadladldaladlda} translates into an expansion of the symbol of the pseudodifferential operator $Q_X$, see Theorem \ref{conomkmlog}, where terms for larger $j$ contribute to terms of the symbol of lower order jointly in $\xi$ and $R$. 

Section \ref{Mclosed} discusses the special case when $X=M$ is a closed manifold. In this case the inverse $Q_{X}(R)^{-1}$ is a pseudodifferential operator, whenever it exists, and the full symbol of $Q_X(R)^{-1}$ can be explicitly computed by the iterative scheme described in Proposition \ref{alksdnaksndasodn}. The operator $Q_X$ is generally better behaved than $\mathcal{Z}_X$. The off-diagonal singularities of $\rd$ may create problems when considering $\mathcal{Z}_X$ as a map between Sobolev spaces. Even for a Riemannian manifold these relate to difficult geometric questions about the structure of the cut-locus, cf. the discussion in \cite[Section 3]{gimpgofflouc}. Under suitable assumptions on $(X,\rd)$, such as (MR) and (SMR), properties of $Q_X$ are inherited by $\mathcal{Z}_X$, see Corollary \ref{penfpefnaon}. The asymptotic expansion of the magnitude function 
$$\mathcal{M}_X(R)=R^{-1}\langle \mathcal{Z}_X(R)^{-1}1,1\rangle_X \sim R^{-1}\langle Q_X(R)^{-1}1,1\rangle_X,$$ 
then follows with expansion coefficients $c_j$ computed from the symbol of $Q_X^{-1}$, and therefore from $C^{j}$. See Theorem \ref{magcompsclosedeldforz} and \ref{compactthm}.

The reader can find a discussion of the properties (MR) and (SMR) in Remark \ref{mrandsmr} and details in \cite[Section 3]{gimpgofflouc}. They are satisfied, for example, if $\rd^2$ is smooth on all of $X\times X$, such as for a domain or a submanifold in $\R^n$ with the induced metric.

The analysis in Section \ref{Mclosed} for a closed manifold illustrates the general approach taken in this article. For a manifold with boundary the inverse $Q_X(R)^{-1}$ decomposes into the previously studied interior part and a new boundary contribution, as described in Theorem \ref{reoslsdlnadkn}. Section \ref{sec:boundary} discusses the boundary contribution using methods for pseudodifferential boundary value problems, here of negative order $-\frac{n+1}{2}$. While such problems have a long history \cite{eskinbook}, the magnitude problem connects to recent developments for boundary problems for the fractional Laplacians \cite{g3, grubb1}. The resulting boundary contribution to the expansion coefficients $c_j$ involves the coefficients $C^j$ and the geometry of $\partial X$ inside $X$. Sections \ref{Mclosed} and \ref{sec:boundary} rely on and motivate the purely analytic results for $\mathcal{Z}_X$ in \cite{gimpgofflouc}, which we here exploit for geometric applications.

The methods presented in this article are amenable to a black-box computer implementation, discussed in Appendix \ref{appendixA}. Appendix \ref{appendixB} lists pseudocode for Euclidean domains and for submani\-folds of Euclidean space, which are the basis for the computational results in this article.\\

Beyond these main results of the article, the analysis of the boundary contributions in Subsection \ref{boundarlocalsos} sheds light 
 on the boundary behavior of the weight distribution recently considered for applications in data science. More precisely, in Subsection \ref{boundarlocalsos}, we use Lemma \ref{lasdnaodjna} to obtain a weak form of the structural properties conjectured in \cite{bunch2}. In Subsection \ref{tarolmlnad}, we connect to results of Meckes \cite{meckes20} and address the Taylor expansion of the magnitude function at $R=0$.

\section{Geometry of magnitude function and magnitude operator}
\label{sec:geometry}

The structural properties and geometric formulas for the expansion coefficients of the magnitude function as $R\to \infty$ shed light on the geometric content of magnitude. We here use Theorem \ref{introthm}, and more generally results from Subsection \ref{asymtppomomad} and \ref{subseconasus} below, as a tool to generate geometric consequences on magnitude.

\subsection{Relation to Euler characteristic} 
\label{eulerrel} 
Magnitude was originally motivated as a generalization of the Euler characteristic to finite (enriched) categories, and Leinster and Willerton conjectured that the expansion coefficient $c_n$ equals the Euler characteristic for a convex body in $\mathbb{R}^n$. Therefore, the relation between these two quantities is of interest.

\begin{prop}
\label{apfknaodfnaspodn}
Let $M$ be a compact Riemannian surface with geodesic distance $\rd$ and Riemannian metric $g$. Assume that $M$ admits a family $(u_R)_{R>R_0}\subseteq \mathcal{D}'(M)$ of distributional solutions to 
$$\int_M \e^{-R\rd(x,y)}u_R(y)\rd y=1, \quad R>R_0.$$
Assume that for any $\chi,\chi'\in C^\infty(M)$ with disjoint supports it holds that 
$$\int_M \chi(x)\e^{-R\rd(x,y)}u_R(y)\chi'(y)\rd y=O(R^{-3}).$$ 
Then the magnitude function $\mathcal{M}_M(R)$ is defined for $R>R_0$ and there is an asymptotic expansion 
$$\mathcal{M}_M(R)=\frac{\mathrm{vol}_2(M,g)}{2\pi}R^2+\frac{\chi(M)}{4}+O(R^{-1}),$$
where $\chi(M)$ denotes the Euler characteristic of $M$.
\end{prop}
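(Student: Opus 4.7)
The strategy is to realize the proposition as a closed two-manifold instance of the symbolic framework developed in Section \ref{Mclosed}, and to convert the resulting local-geometric formula for the constant term into the topological invariant $\chi(M)$ via Gauss-Bonnet. The hypothesis on off-diagonal $O(R^{-3})$ decay plays the role that property (MR) plays in the main theorems.

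By the abstract identification of Section \ref{sec:meckes}, the hypothesized solvability of the weight equation $R\mathcal{Z}_M(R)u_R=1$ yields $\mathcal{M}_M(R)=\langle u_R,1\rangle_M$ for $R>R_0$. Fix a diagonal cutoff $\eta\in C^\infty(M\times M)$ equal to $1$ in a neighborhood of the diagonal where $\rd^2$ is smooth, and let $Q_M(R)$ be the associated localization from Section \ref{sec:resandz}. The assumption that $\int_M \chi(x)\mathrm{e}^{-R\rd(x,y)}u_R(y)\chi'(y)\,\mathrm{d}y=O(R^{-3})$ for disjoint supports translates to $(\mathcal{Z}_M(R)-Q_M(R))u_R=O(R^{-3})$ uniformly, so that
\begin{equation*}
\mathcal{M}_M(R)=\langle Q_M(R)^{-1}R^{-1},1\rangle_M+O(R^{-1}).
\end{equation*}

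Now apply Theorem \ref{magcompsclosedeldforz}/\ref{compactthm} to $Q_M(R)$, a parameter-dependent pseudodifferential operator of order $-3$ on the closed $2$-manifold $M$ whose principal symbol is proportional to $(R^2+|\xi|_g^2)^{-3/2}$ in geodesic normal coordinates. Symbolic inversion and pairing with $1$ produce an expansion
\begin{equation*}
\mathcal{M}_M(R)\sim\frac{1}{2\pi}\bigl(c_0(M)R^2+c_1(M)R+c_2(M)\bigr)+O(R^{-1}),
\end{equation*}
where $c_0(M)=\mathrm{vol}_2(M,g)$ is immediate from the leading symbol, $c_1(M)=0$ by parity (no boundary, odd-order interior contributions integrate to zero), and $c_2(M)$ is a universal rational multiple of $\int_M s\,\mathrm{d}x$ extracted from the subprincipal symbol. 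The prefactor is governed by the curvature tensors $C^{j}$ of the Taylor expansion \eqref{talaldadladldaladlda} of $\rd^2$.

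To finish, invoke Gauss-Bonnet. In dimension two, $s=2K$ with $K$ the Gaussian curvature, and $\int_M K\,\mathrm{d}A=2\pi\chi(M)$, hence $\int_M s\,\mathrm{d}x=4\pi\chi(M)$. Multiplying by the rational prefactor from the subprincipal symbol and the $1/(2\pi)$ normalization yields the claimed constant $\chi(M)/4$. The principal obstacle is the explicit determination of this rational prefactor: it requires tracking the curvature corrections from the $C^{j}$ through the full parametrix construction of $Q_M^{-1}$ in dimension two, and is precisely the content of the symbol algorithm systematized in Appendix \ref{appendixA}. Gauss-Bonnet is then the clean geometric input that transforms the local scalar-curvature integral into the topological invariant $\chi(M)$, exhibiting magnitude as a higher-order generalization of the Euler characteristic.
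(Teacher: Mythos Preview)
Your overall strategy coincides with the paper's: invoke the closed-manifold expansion from Section~\ref{Mclosed} (specifically Theorem~\ref{compactthm}, which is tailored to exactly the off-diagonal decay hypothesis you use), read off that $c_0(M)=\mathrm{vol}_2(M,g)$, $c_1(M)=0$, and $c_2(M)$ is a universal multiple of $\int_M s\,\rd x$, and then convert the curvature integral to $\chi(M)$ by Gauss--Bonnet. The paper's proof is essentially two lines: cite Theorem~\ref{compactthm} for the expansion with $c_2(M)=\tfrac{1}{2}\int_M s$, then apply Gauss--Bonnet.

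The gap in your write-up is that you do not actually determine the rational prefactor in front of $\int_M s\,\rd x$; you explicitly call this ``the principal obstacle'' and defer it to the symbol algorithm of Appendix~\ref{appendixA}. That is not a proof: without the constant, Gauss--Bonnet cannot produce the specific value $\chi(M)/4$. The paper does not compute this constant from the symbol calculus either; rather, Theorem~\ref{compactthm} fixes $c_2=\tfrac{n+1}{6}\int_M s$ by invariant theory (the only universal degree-$2$ scalar is $\int_M s$) together with Willerton's explicit computations for homogeneous spaces, which pin down the universal coefficient. You should invoke Theorem~\ref{compactthm} directly for the value of $c_2$, rather than gesture at the parametrix algorithm.

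A minor point: your reduction step from the off-diagonal hypothesis to $\mathcal{M}_M(R)=R^{-1}\langle Q_M(R)^{-1}1,1\rangle+O(R^{-1})$ is correct in spirit but glosses over the operator-norm bookkeeping (one must check that $Q_M(R)^{-1}$ applied to the $O(R^{-4})$ remainder $L_M(R)u_R$ stays $O(R^{-1})$). The paper handles this by citing \cite[Proposition~3.2]{gimpgofflouc}, which packages exactly this estimate under the stated hypothesis.
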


We note that if $M$ is a homogeneous surface of compact type, then it satisfies the assumptions of the proposition as we can construct $(u_R)_{R>R_0}$ from an averaging procedure. See more in \cite{will}. For further discussion concerning this assumption, see Remark  \ref{compactthmrmk}. 

\begin{proof}
The assumptions on the existence of the distributional solution and Theorem \ref{compactthm}, a variant of Theorem \ref{introthm} stated in the introduction, implies that $\mathcal{M}_M(R)=\int_M u_R(y)\rd y$ is defined for $R>R_0$. Theorem \ref{compactthm} in fact gives us $\mathcal{M}_M(R)\sim \frac{1}{2\pi}\sum_{k=0}^2 c_k(M)R^{2-k}+O(R^{-1})$ for $c_2(M) = \frac{1}{2} \int_M \mathrm{s}$, where $s$ is the scalar curvature. The assertion then follows from the Gauss-Bonnet theorem for surfaces, $\int_M \mathrm{s} = \pi \chi(M)$.
\end{proof}

A corresponding connection between the constant term in $\mathcal{M}_X$ and the Euler characteristic fails for domains $X \subset \mathbb{R}^3$. With $H$ the mean curvature of $\partial X$, $c_3(X) = \delta \int_{\partial X} H^2$  is a multiple of the Willmore energy by \cite{gimpgoff4th}. The Willmore energy, however, can be arbitrarily large on surfaces of genus $0$ and is not determined by the Euler characteristic.

\subsection{Inclusion-exclusion principles}
\label{inadiandin}
As first shown in \cite{gimpgoff} for smooth, compact domains in a Euclidean space of odd dimension, one of the fundamental properties of the Euler characteristic still holds in an asymptotic form: the inclusion-exclusion principle. The results in this article imply an inclusion-exclusion principle for smooth, compact domains in manifolds of any dimension, under the condition (MR) from Remark \ref{MRremark}.

\begin{prop}\label{inex}
Let $M$ be a manifold with a distance function which satisfies $(MR)$. Let $X,Y \subset M$ be smooth, compact domains such that $X\cup Y$ and $X \cap Y$ are smooth. Then
$$\mathcal{M}_{X\cup Y}(R)=\mathcal{M}_X(R)+\mathcal{M}_Y(R)-\mathcal{M}_{X\cap Y}(R)+O(R^{-\infty}).$$
\end{prop}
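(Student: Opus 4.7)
The plan is to reduce the identity to a coefficient-by-coefficient comparison in the asymptotic expansions supplied by Theorem \ref{introthm} (and its extensions discussed in Subsection \ref{subseconasus}). Since $M$ satisfies (MR), each of the four smooth compact subdomains $Z \in \{X, Y, X \cup Y, X \cap Y\}$ inherits the property, and hence
$$\mathcal{M}_Z(R) \ \sim\ \frac{1}{n!\omega_n} \sum_{j=0}^\infty c_j(Z)\, R^{n-j}, \qquad R \to \infty.$$
The estimate $\mathcal{M}_{X\cup Y} + \mathcal{M}_{X\cap Y} - \mathcal{M}_X - \mathcal{M}_Y = O(R^{-\infty})$ is therefore equivalent to the identity
$$c_j(X\cup Y) + c_j(X\cap Y) \ =\ c_j(X) + c_j(Y) \quad \text{for every } j \geq 0.$$

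By Theorem \ref{introthm}, each coefficient decomposes as $c_j(Z) = c_j(Z^\circ) + c_j(Z,\partial Z)$, where $c_j(Z^\circ) = \int_Z P_j$ and $c_j(Z,\partial Z) = \int_{\partial Z} Q_j$ for universal polynomials $P_j$ in covariant derivatives of the ambient curvature, and $Q_j$ additionally in the second fundamental form of $\partial Z$ in $M$; the low-order cases $c_0 = \mathrm{vol}_n$, $c_1 \propto \mathrm{vol}_{n-1}(\partial\,\cdot\,)$, and the scalar/mean curvature expression for $c_2$ fit the same pattern. The essential feature is that $P_j$ and $Q_j$ depend only on local geometric data at the point at which they are evaluated.

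For the interior part, the pointwise identity $\mathbf{1}_{X\cup Y} + \mathbf{1}_{X\cap Y} = \mathbf{1}_X + \mathbf{1}_Y$ immediately gives
$$c_j\bigl((X\cup Y)^\circ\bigr) + c_j\bigl((X\cap Y)^\circ\bigr) \ =\ c_j(X^\circ) + c_j(Y^\circ).$$
For the boundary part, the smoothness hypothesis on $X\cup Y$ and $X\cap Y$ excludes transverse intersections of $\partial X$ and $\partial Y$. Modulo a set of $(n-1)$-dimensional Hausdorff measure zero, $\partial X$ decomposes as $(\partial X \setminus Y) \sqcup (\partial X \cap Y^\circ)$, the first piece being a relatively open subset of $\partial(X\cup Y)$ and the second of $\partial(X \cap Y)$; symmetrically for $\partial Y$. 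At a point in such a piece, a neighborhood in $\partial(X\cup Y)$ (respectively $\partial(X\cap Y)$) agrees as a hypersurface of $M$ with a neighborhood in $\partial X$ or $\partial Y$, so the induced metric, second fundamental form, and curvature data, and consequently the integrand $Q_j$, agree pointwise. Summing yields the corresponding boundary identity, and combining with the interior one proves the claim.

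The main obstacle is the careful handling of the possibly nonempty overlap $\partial X \cap \partial Y$: under the smoothness of $X \cup Y$ and $X \cap Y$, such an overlap is open in both $\partial X$ and $\partial Y$, and a short geometric case analysis (according to whether $X$ and $Y$ lie on the same side of the shared hypersurface or on opposite sides) is needed to verify that each such overlap contributes with matching multiplicity to both sides of the identity $c_j(X \cup Y) + c_j(X \cap Y) = c_j(X) + c_j(Y)$. Once this verification is in place, the proof reduces to the locality of $P_j, Q_j$ noted above.
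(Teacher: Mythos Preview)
Your proposal is correct and follows essentially the same approach as the paper: invoke the inheritance of property (MR) by smooth compact subdomains, apply the asymptotic expansion to each of the four domains, and reduce to the additivity of the coefficients $c_j$ via their expression as integrals of local densities. The paper's proof is a terse two-sentence appeal to the local formulas in Theorem~\ref{technicalthmsec6}; you have spelled out the interior/boundary decomposition and the handling of $\partial X \cap \partial Y$ in more detail than the paper does, but the underlying idea is identical.
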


\begin{proof}
The assertion follows from the local formulas for the expansion coefficients $c_j$ from Theorem \ref{technicalthmsec6} and the fact that property $(MR)$ is inherited by smooth, compact domains.
\end{proof}

\subsection{Can you ``magnitude the shape of a drum''}
\label{specgeo1}

In spectral geometry, analogues of the expansion coefficients $c_j$  have proven fruitful to find relationships between geometry 
and the eigenvalues of the Laplace-Beltrami operator, see \cite{notices} for a recent overview. The guiding question by M. Kac, ``Can you hear the shape of a drum?'' \cite{kac} has an analogue for the magnitude function: Does the magnitude function $\mathcal{M}_X$ determine a compact domain $X$ up to isometry? 

The answer is positive for the magnitude function of a ball: 
\begin{prop}\label{balldetermined}
Let $B \subset \mathbb{R}^n$ a ball. If $X \subset \mathbb{R}^n$ is a smooth, compact domain with $\mathcal{M}_X = \mathcal{M}_B +o(R^{n-1})$ for $R \to \infty$, then $X$ is isometric to $B$.
\end{prop}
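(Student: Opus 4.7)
The plan is to extract the two leading coefficients in the asymptotic expansion of the magnitude function and then invoke the rigidity (equality case) of the classical isoperimetric inequality in $\mathbb{R}^n$. No machinery beyond what is already established in the paper is needed.

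First, I would apply Theorem \ref{introthm} to both $B$ and $X$ with their induced Euclidean distance. For flat Euclidean distance, $\rd^2$ is smooth on all of $\mathbb{R}^n\times\mathbb{R}^n$, so property $(MR)$ holds automatically and the theorem applies. This yields asymptotic expansions
$$\mathcal{M}_X(R)\sim \frac{1}{n!\,\omega_n}\sum_{j=0}^\infty c_j(X)\,R^{n-j}, \qquad \mathcal{M}_B(R)\sim \frac{1}{n!\,\omega_n}\sum_{j=0}^\infty c_j(B)\,R^{n-j},$$
with $c_0(\cdot)=\mathrm{vol}_n(\cdot)$ and $c_1(\cdot)=\tfrac{n+1}{2}\,\mathrm{vol}_{n-1}(\partial(\cdot))$. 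Subtracting the two expansions, the hypothesis $\mathcal{M}_X(R)-\mathcal{M}_B(R)=o(R^{n-1})$ forces both the coefficient of $R^n$ and of $R^{n-1}$ to vanish, so that
$$\mathrm{vol}_n(X)=\mathrm{vol}_n(B) \quad\text{and}\quad \mathrm{vol}_{n-1}(\partial X)=\mathrm{vol}_{n-1}(\partial B).$$

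Finally, I would invoke the classical isoperimetric inequality in $\mathbb{R}^n$: among smooth bounded domains of a fixed volume, the Euclidean ball uniquely minimizes the $(n-1)$-dimensional surface measure, with equality attained only by balls. Since $X$ has the same volume as the ball $B$ and its perimeter equals that of the isoperimetric minimizer of its volume class, the rigidity of the isoperimetric inequality forces $X$ to be a Euclidean ball; the volume constraint then fixes its radius, so $X$ is isometric to $B$.

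The argument is essentially routine once the first two expansion coefficients have been identified with volume and perimeter. There is no real obstacle: higher-order coefficients are invisible to the assumed remainder $o(R^{n-1})$, and the entire geometric content is carried by the rigidity clause of the isoperimetric inequality. It is worth noting that improving the remainder to $o(R^{n-2})$ would involve $c_2(X) = \tfrac{(n+1)^2(n-1)}{8}\int_{\partial X}H\,\rd S$ (the scalar curvature term vanishing in flat space) and would give an alternative route through the Minkowski-type characterizations of spheres by constant mean curvature, as in Subsection \ref{specgeo2}; but a remainder of order $o(R^{n-1})$ already suffices via the isoperimetric approach sketched above.
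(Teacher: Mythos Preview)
Your proposal is correct and follows essentially the same approach as the paper: extract $\mathrm{vol}_n(X)=\mathrm{vol}_n(B)$ and $\mathrm{vol}_{n-1}(\partial X)=\mathrm{vol}_{n-1}(\partial B)$ from the first two expansion coefficients, then invoke the equality case of the isoperimetric inequality. The paper's proof is slightly terser but identical in substance.
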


\begin{proof}
By assumption 
\begin{align*}
n!\omega_n \mathcal{M}_X(R) &= \mathrm{vol}_n(X) R^{n} + {\textstyle \frac{n+1}{2}} \textnormal{vol}_{n-1}(\partial X) R^{n-1} + o(R^{n-1})  \\&= \mathrm{vol}_n(B) R^{n} +{\textstyle \frac{n+1}{2}} \textnormal{vol}_{n-1}(\partial B) R^{n-1} + o(R^{n-1}) = n!\omega_n\mathcal{M}_B(R) + o(R^{n-1}).
\end{align*}
Therefore $\mathrm{vol}_n(X) = \mathrm{vol}_n(B)$ and $\mathrm{vol}_{n-1}(\partial X) = \mathrm{vol}_{n-1}(\partial B)$. Recalling the isoperimetric inequality, $n \omega_n^{1/n} \mathrm{vol}_{n}(X)^{(n-1)/n} \leq \mathrm{vol}_{n-1}(\partial X)$, with equality if and only if $X$ is isometric to a ball, it follows that $X$ is isometric to $B$.
\end{proof}

\begin{remark}
For nonconvex domains, however, a counterexample by Meckes \cite{blog4} shows that the magnitude function $\mathcal{M}_X$ does not determine a compact domain $X$ up to isometry. Let $n$ be odd. Consider balls $B_1, B_2 \subset \mathbb{R}^n$ of the same diameter which are contained in the interior of a large ball $B$. Then $\mathcal{M}_{B\setminus B_1^\circ} = \mathcal{M}_{B\setminus B_2^\circ}$, but generically $B\setminus B_1^\circ$ and $B\setminus B_2^\circ$ are not isometric. 
\end{remark}

\subsection{Constant mean curvature} 
\label{specgeo2}

The proof of Proposition \ref{balldetermined} indicates the opportunities for studying the relationship between the geometry of a domain $X$ and its magnitude function $\mathcal{M}_X$ using techniques from spectral geometry, based on the expansion of $\mathcal{M}_X$ for $R \to \infty$. Using such techniques we obtain an analogue for magnitude of a theorem by Berger \cite{berger} that for a closed Riemannian surface having constant sectional curvature is determined by the eigenvalues of the Laplace-Beltrami operator.   

\begin{prop}\label{cmcberger}
Let $X,Y \subset \mathbb{R}^n$ be smooth, compact domains and $n$ odd or $n=2$, and assume that $\partial X$ has constant mean curvature $H(\partial X) =H$. If $\mathcal{M}_X = \mathcal{M}_Y +o(R^{n-3})$ for $R \to \infty$, then also $\partial Y$ has constant mean curvature $H(\partial Y) = H$.
\end{prop}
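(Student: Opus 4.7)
The plan is to read off the first four coefficients in the asymptotic expansion of $\mathcal{M}_X$ and $\mathcal{M}_Y$ from the hypothesis, and then apply a Cauchy--Schwarz argument in which constant mean curvature is precisely the equality case.

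First, by Theorem \ref{Rntheorem} (the Euclidean specialization of Theorem \ref{introthm}), both $\mathcal{M}_X$ and $\mathcal{M}_Y$ admit asymptotic expansions of the form $\frac{1}{n!\omega_n}\sum_j c_j R^{n-j}$, and matching $\mathcal{M}_X = \mathcal{M}_Y + o(R^{n-3})$ is equivalent to $c_j(X) = c_j(Y)$ for $j=0,1,2,3$. The explicit forms given in Theorem \ref{introthm} (with $s\equiv 0$ in Euclidean space) together with Remark \ref{computersays} and the results of \cite{gimpgoff4th} for odd $n$ give, up to positive universal constants $\alpha,\beta>0$,
\begin{align*}
c_0 &= \mathrm{vol}_n,\qquad c_1 = {\textstyle\frac{n+1}{2}}\,\mathrm{vol}_{n-1}(\partial),\\
c_2 &= \alpha\!\int_{\partial}\! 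H\,\rd S,\qquad c_3 = \beta\!\int_{\partial}\! H^2\,\rd S.
\end{align*}
(In dimension $n=2$ the constant $\beta=\gamma$ from Remark \ref{computersays}; in odd $n\geq 3$ it is the Willmore constant from \cite{gimpgoff4th}. The nonvanishing of $\beta$ is the critical input.) Thus from the hypothesis we get
\begin{equation*}
\mathrm{vol}_{n-1}(\partial X)=\mathrm{vol}_{n-1}(\partial Y),\quad \int_{\partial X}\!H\,\rd S=\int_{\partial Y}\!H\,\rd S,\quad \int_{\partial X}\!H^2\,\rd S=\int_{\partial Y}\!H^2\,\rd S.
\end{equation*}

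Next I would apply Cauchy--Schwarz on $\partial Y$:
\begin{equation*}
\left(\int_{\partial Y} H\,\rd S\right)^{\!2} \;\leq\; \mathrm{vol}_{n-1}(\partial Y)\int_{\partial Y}H^2\,\rd S,
\end{equation*}
with equality if and only if $H$ is constant on $\partial Y$. The point is that for $\partial X$ the same inequality is already an equality, because constant $H$ on $\partial X$ gives $\int_{\partial X}H\,\rd S = H\,\mathrm{vol}_{n-1}(\partial X)$ and $\int_{\partial X}H^2\,\rd S = H^2\,\mathrm{vol}_{n-1}(\partial X)$. Substituting the three identities obtained above into the $\partial Y$ inequality shows that equality is attained there too, so $H$ is constant on $\partial Y$. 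Finally, dividing $\int_{\partial Y} H\,\rd S = \int_{\partial X} H\,\rd S = H\,\mathrm{vol}_{n-1}(\partial X) = H\,\mathrm{vol}_{n-1}(\partial Y)$ by $\mathrm{vol}_{n-1}(\partial Y)$ identifies the constant value as $H$.

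The only genuine difficulty is citing the correct closed-form expression for $c_3$: one needs $c_3(X)$ to be a nonzero multiple of $\int_{\partial X}H^2\,\rd S$, and this is exactly the dimensional restriction in the hypothesis ($n$ odd from \cite{gimpgoff4th}, $n=2$ from Remark \ref{computersays}); for general even $n\geq 4$ this formula is not established here and the argument does not apply. Everything else is a direct extraction from Theorem \ref{introthm} plus the Cauchy--Schwarz equality case.
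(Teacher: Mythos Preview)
Your proof is correct and is essentially the same as the paper's. The paper phrases the Cauchy--Schwarz step as the observation that the nonnegative quadratic $p_Y(z)=\int_{\partial Y}(z-H(\partial Y))^2\,\rd S$ has a real zero if and only if $H(\partial Y)$ is constant; since $p_Y$ is determined by $c_1,c_2,c_3$ and these agree with those of $X$, one has $p_Y(z)=\int_{\partial X}(z-H)^2\,\rd S$, which vanishes at $z=H$. This is exactly your Cauchy--Schwarz equality-case argument in different clothing.
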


This result generalizes Proposition \ref{balldetermined}. 

\begin{proof}
Note that the polynomial $p_Y(z) = \int_{\partial Y}(z-H(\partial Y))^2$ has a real zero if and only if  the mean curvature of $\partial Y$ is constant. Since $c_j(X)\propto \int_{\partial X}H^{j-1}\rd S$ for $j=1,2,3$ if $n=2$ (see Theorem \ref{diskthm} and Remark \ref{computersays}) or $n$ is odd (see \cite{gimpgoff4th}), there exist constants $\alpha_n,\beta_n, \gamma_n$ depending only on the dimension $n$ such that 
$$p_Y(z)= \alpha_n c_1(Y) z^2 + \beta_n c_2(Y) z + \gamma_n c_3(Y) \ .$$
Because $c_j(Y) = c_j(X)$ for $j=1,2,3$, $p_Y(z) = \int_{\partial X}(z-H(\partial X))^2$. With $H(\partial X) = H$ constant, $p_Y$ vanishes in $H$, and therefore $H(\partial Y)$ is constant and equals $H$.
\end{proof}

\subsection{Relation to residues of manifolds}\label{residuessec}

For a compact metric space $(X,\rd)$, the geometric relevance of the magnitude operator $\mathcal{Z}_X(R)$ is not restricted to the magnitude function $\mathcal{M}_X(R) =R \langle \mathcal{Z}_X(R)^{-1}1,1\rangle$. Closely related quantities of interest are the meromorphic energy function 
$$B_X(z) := \mathrm{F.P.}|_{s=z}\int_{X\times X} \rd(x,y)^s\ \rd x\ \rd y\ ,$$ 
and the residue 
$$R_X(z):=\mathrm{res}_{s=z}\int_{X\times X} \rd(x,y)^s\ \rd x\ \rd y\ .$$  

The energy $B_M(z)$ was first introduced by Brylinski \cite{B} for knots in $\mathbb{R}^3$ and by Fuller and Venmuri \cite{FV} for closed submanifolds of $\mathbb{R}^n$. $B_M(z)$ and $R_M(z)$ may be expressed in terms of $\mathcal{Z}_X$ by means of the formula 
$$\rd^{-s} = \frac{1}{\Gamma(s)}\int_0^\infty  R^{s-1} e^{-R \rd}\ \rd R,$$ 
valid for $\mathrm{Re}(s)>0$.
Then 
\begin{align*}
\int_{X\times X} \rd(x,y)^{-s}\ \rd x\ \rd y &= 
\frac{1}{\Gamma(s)}\int_0^\infty  R^{s-1} \int_{X\times X} e^{-R \rd(x,y)}\ \rd x\ \rd y\ \rd R\\ 
&= \frac{1}{\Gamma(s)}\int_0^\infty  R^{s-1} \langle \mathcal{Z}_X(R) 1,1\rangle\ \rd R\ . 
\end{align*}
To understand the relation between the residues of 
$$f(s) = \int_{X\times X} \rd(x,y)^{-s}\ \rd x\ \rd y$$ 
and the expectation value of the magnitude operator, 
$$e(t) = \langle \mathcal{Z}_X(t)1,1\rangle =\int_{X\times X}e^{-t\rd(x,y)}\rd x\ \rd y\ ,$$
we divide the integral on the right hand side and set $\lambda = R^{-1}$:
\begin{align*}
f(s) &= \frac{1}{\Gamma(s)}\int_0^1  R^{s-1} e(R)\ \rd R + \frac{1}{\Gamma(s)}\int_1^\infty  R^{s-1} e(R)\ \rd R\\ & = 
\frac{1}{\Gamma(s)}\int_0^1  R^{s-1} e(R)\ \rd R + \frac{1}{\Gamma(s)}\int_0^1  \lambda^{-(s+1)} e(\lambda^{-1})\ \rd \lambda\ .
\end{align*}
The proof of the following proposition is analogous to \cite[Proposition 5.1]{gs}.

\begin{prop}
\label{relarlandlna}
Assume that $e(t)$ and $e(t^{-1})$ are holomorphic in $V_{\theta_0}$ (for some $\theta_0 \in (0,\frac{\pi}{2})$), $$V_{\theta_0} = \{t = re^{i\theta} :\ 2>r>0, |\theta|<\theta_0\}\ ,$$ and $e(t) = O(|t|^a)$, $e(t^{-1}) = O(|t|^{b})$ for $t\to 0$ in $V_\delta$, any $\delta<\theta_0$, for some $a,b\in \mathbb{R}$. 
Consider the function
$$f(s) = \int_0^\infty t^{s-1} e(t) \ \rd t,$$ 
holomorphic for $\mathrm{Re}(s)>-a$. Then the following properties are equivalent:
\begin{enumerate}
\item[(a)] $e(t)$ and $e(t^{-1})$ have asymptotic expansions for $t \to 0$, 
\begin{align*}
e(t) &\sim \sum_{j=0}^\infty a_{j} t^{\beta_j}, \ \ \mbox{where}\ \ \beta_j \to +\infty,\\
e(t^{-1}) &\sim \sum_{j=0}^\infty  A_{j} t^{\gamma_j} , \ \ \mbox{where}\ \ \gamma_j \to +\infty, 
\end{align*}
uniformly for $t \in V_\delta$, for each $\delta<\theta_0$.\\
\item[(b)] $f(s)$ is meromorphic on $\mathbb{C}$ with the singularity structure 
$$\Gamma(s) f(s) \sim \sum_{j=0}^\infty \frac{a_{j}}{\beta_j+s} + \sum_{j=0}^\infty \frac{A_{j}}{\gamma_j-s},\ $$
and for each real $C_1, C_2$ and each $\delta<\theta_0$,
$$|f(s)| \leq C(C_1,C_2,\delta)e^{-\delta |\mathrm{Im}(s)|}, \ \ |\mathrm{Im}(s)|\geq 1, \ C_1\leq |\mathrm{Re}(s)| \leq C_2\ .$$
\end{enumerate}
\end{prop}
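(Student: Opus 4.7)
The plan is to follow the standard Mellin transform correspondence between asymptotic expansions at $0$ and $\infty$ and the meromorphic/decay structure of the Mellin transform, as in the cited \cite[Proposition 5.1]{gs}. The argument proceeds by splitting the integral defining $f(s)$ into contributions from small and large $t$, each of which becomes a Mellin integral over $(0,1)$ of a function with controlled asymptotic behavior at $0$.

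First I would write
\[
f(s) = \int_0^1 t^{s-1} e(t)\, dt + \int_0^1 \lambda^{-s-1} e(\lambda^{-1})\, d\lambda,
\]
after substituting $\lambda = t^{-1}$ in the tail. For $(a)\Rightarrow(b)$, inserting $e(t) = \sum_{j=0}^{N} a_j t^{\beta_j} + r_N(t)$ with $r_N(t) = O(t^{\beta_{N+1}})$ and integrating term by term gives
\[
\int_0^1 t^{s-1}e(t)\, dt = \sum_{j=0}^N \frac{a_j}{\beta_j + s} + h_N(s),
\]
with $h_N$ holomorphic in $\{\mathrm{Re}(s) > -\beta_{N+1}\}$. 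The analogous treatment of the second integral produces simple poles at $s = \gamma_j$ with residues $-A_j$, adding up to the claimed singularity structure; the factor $\Gamma(s)$ tracks the combined picture, and in the exceptional case where some $-\beta_j$ or $\gamma_j$ coincides with a nonpositive integer the residue computation is an elementary modification.

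The exponential decay estimate $|f(s)| \lesssim e^{-\delta|\mathrm{Im}(s)|}$ is the place where the full sectorial holomorphy hypothesis on $e$ is used. Since $e(t)$ is holomorphic on $V_{\theta_0}$ with polynomial bounds as $t \to 0$, Cauchy's theorem permits rotating the contour of integration defining $f(s)$ to the ray $\arg t = \pm \delta$, where the sign is chosen opposite to $\mathrm{sign}(\mathrm{Im}(s))$; the closure of the contour at $t = 0$ is justified by the bound $|e(t)| = O(|t|^a)$, and closure near $t=\infty$ by the bound on $e(t^{-1})$ after the corresponding substitution in the tail. On the rotated ray
\[
|t^{s-1}| = |t|^{\mathrm{Re}(s)-1}\, e^{-\delta\,|\mathrm{Im}(s)|},
\]
which gives the claimed bound uniformly on vertical strips $C_1 \leq \mathrm{Re}(s) \leq C_2$.

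For $(b)\Rightarrow(a)$ I would invert the Mellin transform: the exponential decay of $|f(s)|$ makes the contour integral
\[
e(t) = \frac{1}{2\pi i}\int_{c-i\infty}^{c+i\infty} t^{-s} f(s)\, ds
\]
absolutely convergent, and defines a holomorphic extension of $e$ to $|\arg t| < \delta$ because $|t^{-s}f(s)| \leq |t|^{-\mathrm{Re}(s)} e^{-(\delta - |\arg t|)|\mathrm{Im}(s)|}$. Shifting the contour rightwards past the poles $s = -\beta_0, -\beta_1, \dots, -\beta_N$ and collecting residues yields
\[
e(t) = \sum_{j=0}^N a_j t^{\beta_j} + \frac{1}{2\pi i}\int_{c_N-i\infty}^{c_N+i\infty} t^{-s} f(s)\, ds,
\]
where the remainder is $O(|t|^{c_N})$ uniformly on sectors $|\arg t|<\delta'<\delta$, giving the asymptotic expansion of $e(t)$. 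The analogous leftward shift, or equivalently the substitution $t \mapsto t^{-1}$ combined with the rightward shift in the symmetric half-plane, yields the expansion of $e(t^{-1})$. The main obstacle is executing the contour-rotation argument underlying the exponential decay estimate, as this is the one step that genuinely uses sectorial holomorphy rather than mere regularity on the positive real axis; once that bound is in place, the Mellin inversion and residue calculus proceed routinely, and the forward direction reduces to term-by-term integration against the asymptotic series.
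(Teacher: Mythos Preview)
Your approach is exactly the one the paper has in mind: the paper gives no proof beyond the sentence ``The proof \dots\ is analogous to \cite[Proposition 5.1]{gs},'' and what you have written is precisely the standard Grubb--Seeley argument (split at $t=1$, term-by-term integration for the pole structure, contour rotation in the sector for the exponential decay, Mellin inversion with contour shifting for the converse). One small slip: in the direction $(b)\Rightarrow(a)$, to recover the expansion of $e(t)$ as $t\to 0$ you must shift the inverse-Mellin contour \emph{leftwards} (to large negative $\mathrm{Re}(s)$), not rightwards, since $|t^{-s}|=|t|^{-\mathrm{Re}(s)}$ and the poles $s=-\beta_j$ lie to the left of the initial line; correspondingly the remainder is $O(|t|^{-c_N})$ rather than $O(|t|^{c_N})$.
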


The assumptions on $e(t)=\langle \mathcal{Z}_X(t)1,1\rangle$ from Proposition \ref{relarlandlna} are trivially satisfied; indeed $e(t)=O(t^{-\infty})$ as $t\to \infty$ and $e(t)$ is real analytic near $0$. There is therefore, by Proposition \ref{relarlandlna}, a close relation between the expectation value of the magnitude operator $\langle \mathcal{Z}_X(t)1,1\rangle$ and the residue for an arbitrary metric space $(X,\rd)$.

The geometric content discovered for residues of knots, hypersurfaces and domains therefore translates into geometric content for the corresponding magnitude operator:

\begin{prop}
\label{prop_residues_closed_12}
\cite{B,FV,OS2} Let $M$ be a compact hypersurface in $\mathbb{R}^{m+1}$ with principal curvatures $\kappa_i$. Define  ${\Vert h\Vert}^2=\sum_i{\kappa_i}^2$ and $o_{k}$ the surface area of the unit $k$-sphere. 
\begin{enumerate}
\item The first residue is given by $\displaystyle R_M(-m)=\displaystyle o_{m-1}\,\mbox{\rm Vol}\,(M).$
\item The second residue is given by 
$$R_{M}(-m-2)=\frac{o_{m-1}}{8m}\int_M\left(2{\Vert h\Vert}^2-{m^2\vert H\vert}^2\right)\ \rd x .$$
\end{enumerate}
\end{prop}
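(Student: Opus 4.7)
The strategy is to isolate the pole structure of $s\mapsto\int_{M\times M}\rd(x,y)^s\,\rd x\,\rd y$ from the near-diagonal behavior of the chord distance $\rd(x,y)=|x-y|$. Away from the diagonal the integrand is smooth and produces only an entire contribution in $s$, so after localizing by a smooth cutoff $\chi(x,y)$ supported near $\{x=y\}$, I parameterize a neighborhood of each fixed $x\in M$ as a graph over the tangent plane: $\phi(u)=x+u+f(u)\nu$ for $u\in T_xM$, with $\nu$ a unit normal and $f(u)=\tfrac{1}{2}h(u,u)+O(|u|^3)$, where $h$ denotes the second fundamental form of $M$ at $x$.

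The graph form of the parameterization makes the two Taylor expansions needed at the diagonal easy to read off. The chord distance satisfies
$$\rd(x,\phi(u))^2=|u|^2+f(u)^2=r^2+\tfrac{1}{4}r^4\,h(v,v)^2+O(r^5),\quad r=|u|,\ v=u/r,$$
so the cubic Taylor coefficient of $f$ enters only at order $r^5$ and plays no role. The induced metric $g_{ij}=\delta_{ij}+\partial_if\,\partial_jf$ is a rank-one perturbation of the identity, hence by the matrix determinant lemma $\det g=1+|\nabla f|^2=1+\sum_i\kappa_i^2 u_i^2+O(|u|^3)$ in principal coordinates. Combining the two expansions,
$$\rd(x,\phi(u))^s\sqrt{\det g(u)}\,r^{m-1}=r^{s+m-1}+\left(\frac{s}{8}h(v,v)^2+\frac{1}{2}\sum_i\kappa_i^2 v_i^2\right)r^{s+m+1}+O(r^{s+m+2}).$$

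The radial integral $\int_0^\epsilon r^{s+m-1+2k}\,\rd r$ has a simple pole at $s=-m-2k$ with residue $1$, modulo an entire remainder, and the relevant angular integrals on $S^{m-1}$ are standard: $\int v_i^2\,\rd\sigma=\frac{o_{m-1}}{m}$ and $\int v_i^2 v_j^2\,\rd\sigma=\frac{o_{m-1}}{m(m+2)}(1+2\delta_{ij})$, which give $\int h(v,v)^2\,\rd\sigma=\frac{o_{m-1}}{m(m+2)}\bigl(2\Vert h\Vert^2+m^2H^2\bigr)$. The pole at $s=-m$ picks out only the angular constant, yielding $R_M(-m)=o_{m-1}\mathrm{Vol}(M)$. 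At $s=-m-2$, substitution gives
$$R_M(-m-2)=\int_M\frac{o_{m-1}}{8m}\bigl(-(2\Vert h\Vert^2+m^2H^2)+4\Vert h\Vert^2\bigr)\,\rd x=\frac{o_{m-1}}{8m}\int_M\bigl(2\Vert h\Vert^2-m^2H^2\bigr)\,\rd x.$$

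The main obstacle is the careful bookkeeping at order $r^{s+m+1}$: both the $s$-dependent coefficient in the expansion of $\rd^s$ and the curvature correction in $\sqrt{\det g}$ contribute, and the announced sign combination $2\Vert h\Vert^2-m^2H^2$ arises from a partial cancellation between these two sources rather than from either in isolation. Extending the scheme to higher residues $R_M(-m-2k)$ is mechanical in principle but requires carrying along the full Taylor expansion of $f$ and of the Weingarten endomorphism, a calculation naturally suited to the black-box implementation discussed in Appendix \ref{appendixA}.
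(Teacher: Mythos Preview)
Your argument is correct. The localization to a neighborhood of the diagonal, the graph parameterization $\phi(u)=x+u+f(u)\nu$, the expansion of $\rd^s\sqrt{\det g}$ to order $r^{s+m+1}$, and the evaluation of the spherical moments are all carried out accurately; the cancellation leading to $2\Vert h\Vert^2-m^2H^2$ is exactly as you describe.

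As for comparison: the paper does not supply its own proof of this proposition. It is quoted as a known result from \cite{B,FV,OS2}, included to illustrate that the geometric content already established for Brylinski-type residues transfers to the magnitude operator via the Mellin relation of Proposition~\ref{relarlandlna}. Your direct computation is in fact the standard route taken in those references (particularly \cite{FV,OS2}): Taylor-expand the chord length and the volume element at the diagonal, reduce to a radial integral with a simple pole, and evaluate the angular average. So there is no alternative argument in the paper to contrast with; you have reproduced the original proof rather than found a new one.

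One small point worth making explicit, since you gesture at it with ``$O(r^{s+m+2})$'': the odd-order Taylor terms of $f$ (the cubic in particular) do produce an $O(r^3)$ correction in $\sqrt{\det g}$, but this is odd in $v$ and vanishes upon spherical integration, so it cannot contaminate the residue at $s=-m-2$. You implicitly rely on this parity observation, and it would be cleaner to say so.
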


\begin{prop}
\label{prop_residues_closed_123} 
\cite{OS2} Let $\Omega \subset \mathbb{R}^n$ be a compact domain. 
\begin{align*}
R_{\Omega}(-n)&=\displaystyle {o_{n-1}}\text{\rm Vol}\,(\Omega),\\
R_{\Omega}(-n-1)&=-\frac{o_{n-2}}{n-1}\,\textrm{\rm Vol}(\partial \Omega), \\
R_\Omega(-n-3)&=\displaystyle \frac{o_{n-2}}{24(n^2-1)}\int_{\partial \Omega} \left(3n^2 H^2-2\mathrm{s}\right)\ \rd x ,   
\end{align*}
\end{prop}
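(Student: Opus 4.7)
The strategy is to convert the problem into an asymptotic expansion of the magnitude operator, and apply the Mellin transform identity in Proposition \ref{relarlandlna}. Introduce
\[
e(t) \;=\; \int_{\Omega\times\Omega} e^{-t\,|x-y|}\,\rd x\,\rd y, \qquad t>0,
\]
so that $\int_{\Omega\times\Omega}|x-y|^{-s}\,\rd x\,\rd y=\Gamma(s)^{-1}\int_0^\infty t^{s-1}e(t)\,\rd t$ for $\mathrm{Re}(s)$ large. The function $e(t)$ is real analytic near $0$ and exponentially decaying at infinity, hence the hypotheses of Proposition \ref{relarlandlna} are trivially verified. Setting $s=-\sigma$, the residues $R_\Omega(-\gamma)$ of $\int\rd^s\rd x\rd y$ at $\sigma=\gamma$ are then read off from the asymptotic expansion $e(t)\sim\sum_j A_j t^{-\gamma_j}$ for $t\to\infty$ via $R_\Omega(-\gamma_j)=A_j/\Gamma(\gamma_j)$. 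Hence everything reduces to computing the first four nontrivial coefficients $A_0,A_1,A_2,A_3$ of this large-$t$ expansion, corresponding to $\gamma_j=n,n+1,n+2,n+3$.

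To compute these coefficients, I would introduce geodesic polar coordinates $y=x+r\omega$ centered at $x\in\Omega$ and set $\rho(x,\omega)=\sup\{r>0:x+r\omega\in\Omega\}$, to write
\[
e(t)\;=\;\int_\Omega\!\!\int_{S^{n-1}}\!\!\int_0^{\rho(x,\omega)}e^{-tr}r^{n-1}\,\rd r\,\rd\omega\,\rd x.
\]
Using the explicit primitive
$\int_0^\rho e^{-tr}r^{n-1}\rd r=\tfrac{(n-1)!}{t^n}-\tfrac{(n-1)!}{t^n}e^{-t\rho}\sum_{k=0}^{n-1}\tfrac{(t\rho)^k}{k!}$
splits $e(t)$ into a bulk term $\tfrac{(n-1)!o_{n-1}}{t^n}\mathrm{Vol}(\Omega)$, which immediately yields $A_0$ and hence $R_\Omega(-n)=o_{n-1}\mathrm{Vol}(\Omega)$, and a boundary correction involving $e^{-t\rho(x,\omega)}$. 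In the latter, only the boundary layer $\rho(x,\omega)\lesssim t^{-1}$ contributes non-exponentially-small terms; all contributions from directions that do not exit $\Omega$ are $O(R^{-\infty})$.

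To analyze the boundary layer, I would switch to tubular coordinates $x=x'+u\,\nu(x')$ with $x'\in\partial\Omega$, $u=\mathrm{dist}(x,\partial\Omega)$, and expand the exit distance as
\[
\rho(x,\omega)=\frac{u}{\cos\theta}\Bigl(1+u\,a_1(x',\omega)+u^2a_2(x',\omega)+\cdots\Bigr),
\]
where $\theta$ is the angle between $\omega$ and the outward normal, and the coefficients $a_j$ are universal polynomials in the second fundamental form $\mathrm{II}$ of $\partial\Omega$ and its tangential derivatives; simultaneously, the volume element acquires the Jacobian $\rd x=(1-(n-1)Hu+O(u^2))\rd u\,\rd S(x')$. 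Rescaling $u=s/t$ produces an expansion in negative powers of $t$, and the $A_j$ are obtained by integrating the resulting symbol over $s\in(0,\infty)$, the hemisphere $\{\cos\theta>0\}\subset S^{n-1}$, and $\partial\Omega$. The $t^{-n-1}$ level only sees the flat model, giving $A_1=-\tfrac{n!\,o_{n-2}}{n-1}\mathrm{Vol}(\partial\Omega)$ after evaluating $\int_{S^{n-1}_+}n\cos\theta\,\rd\omega=\tfrac{n\,o_{n-2}}{n-1}$; division by $\Gamma(n+1)$ yields the stated $R_\Omega(-n-1)$. The level $t^{-n-2}$ is linear in the principal curvatures $\kappa_i$, and the corresponding angular integrals vanish by the odd symmetry $\omega\mapsto -\omega$ tangentially, so $R_\Omega(-n-2)=0$ and is skipped.

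The main obstacle is the level $t^{-n-3}$: here one must combine the second-order Taylor expansion of $\rho(x,\omega)$ (which involves $H$, $\|\mathrm{II}\|^2$ and tangential derivatives of $\mathrm{II}$), the first-order Jacobian correction $-(n-1)Hu$, and the quadratic-in-$u$ correction from the bulk primitive, then integrate over the hemisphere. The tangential derivatives of $\mathrm{II}$ are eliminated by integration by parts on $\partial\Omega$, leaving only pointwise invariants of weight two. By $O(n-1)$-invariance of the hemispherical integrals, the answer is forced to be a linear combination of $H^2$ and $\|\mathrm{II}\|^2$, and by the Gauss equation in $\mathbb{R}^n$ we have $\|\mathrm{II}\|^2=n^2 H^2-s$ with $s$ the intrinsic scalar curvature of $\partial\Omega$. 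A careful evaluation of the universal hemispherical moments $\int_0^{\pi/2}\cos^p\theta\sin^{n-2+q}\theta\,\rd\theta$ pins down the coefficients and produces the asserted combination $3n^2H^2-2s$ with the rational prefactor $\tfrac{o_{n-2}}{24(n^2-1)}$, in agreement with the cited computation of \cite{OS2}.
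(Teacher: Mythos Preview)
The paper does not supply its own proof of this proposition: it is quoted verbatim from O'Hara--Solanes \cite{OS2} and stated only to illustrate, via Proposition~\ref{relarlandlna}, that the residues $R_\Omega$ and the large-$R$ expansion of $e(R)=\int_{\Omega\times\Omega}e^{-R|x-y|}\,\rd x\,\rd y$ encode the same geometric data. So there is nothing in the paper to compare your argument against line by line.

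That said, your route is sound and is essentially the computation of \cite{OS2} transported through the Mellin correspondence. Your identification $R_\Omega(-\gamma_j)=A_j/\Gamma(\gamma_j)$ from the $t\to\infty$ expansion of $e(t)$ is correct, and the bulk/boundary-layer split via polar and tubular coordinates is the standard way to produce these coefficients; the leading two residues you compute check out. It is worth emphasizing that the pseudodifferential machinery of this paper (Theorems~\ref{Rntheorem} and \ref{technicalthmsec6}) expands $\langle\mathcal{Z}_X(R)^{-1}1,1\rangle$, not $\langle\mathcal{Z}_X(R)1,1\rangle$, so it does not by itself yield the expansion of $e(t)$; a direct computation such as yours, or the original argument in \cite{OS2}, is genuinely required.

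The only place your sketch is thin is exactly where you flag it: at order $t^{-n-3}$ you must track the second-order Taylor expansion of the exit distance, the Jacobian of the tubular change of variables, and the cross terms from the incomplete-gamma tail simultaneously, and then identify the two $O(n-1)$-invariant hemispherical moments that survive. You also silently assume convexity when writing the inner radial integral as $\int_0^{\rho(x,\omega)}$; for a general smooth domain the ray may re-enter $\Omega$, but the additional pieces are $O(e^{-ct})$ and do not affect the asymptotic expansion, so this should be said once and then ignored. With those two points made explicit, your argument would constitute a complete independent proof.
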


Also an inclusion-exclusion principle was derived for residues \cite{ohara}.

\subsection{Computations for cylinders}

In the special case of a cylinder of the form $X_T=M\times [0,T]$, for $T>0$, the magnitude function of $X_T$ relates to geometric invariants of $M$. Let us compute an instance of this. 

\begin{prop}
\label{cylcompmpmad}
Assume that $M\subseteq \R^N$ is an $n-1$-dimensional compact submanifold. For $T>0$, consider the compact submanifold with boundary 
$$X_T:=M\times [0,T]\subseteq \R^{N+1}.$$
Equipping $M$ and $X_T$ with the subspace distance, we have that 
\begin{align*}
\mathcal{M}_{X_T}(R)\sim \ &\frac{\mathrm{vol}_{n-1}(M)}{n!\omega_n} TR^n+\frac{(n+1)\mathrm{vol}_{n-1}(M)}{2\cdot n!\omega_n} R^{n-1}+\\
&+\frac{n+1}{6\cdot n!\omega_n}\int_M s_{\rd^2,M}\rd x \ TR^{n-2}+\sum_{k=4}^\infty \rho_k(M)TR^{n-k},
\end{align*}
where $\rho_k(M)=\int_M a_{k,0}(x,1)\rd x$ is computed as an integral over $M$ from the local densities $a_{k,0}$ on $X_T$ of Theorem \ref{technicalthmsec6} and $\rho_k(M)=0$ if $k$ is odd.
\end{prop}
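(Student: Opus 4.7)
The plan is to apply Theorem \ref{technicalthmsec6} to $X_T$ equipped with its subspace distance, then exploit the product structure of the distance function to evaluate the local densities. Since $X_T\subseteq \R^{N+1}$ has $\rd^2_{X_T}$ smooth on all of $X_T\times X_T$, the space satisfies property (SMR) by \cite[Section 3]{gimpgofflouc}, so Theorem \ref{technicalthmsec6} gives an asymptotic expansion $\mathcal{M}_{X_T}(R)\sim \frac{1}{n!\omega_n}\sum_{k\geq 0}c_k(X_T)R^{n-k}$, where $c_k(X_T)=c_k(X_T^\circ)+c_k(X_T,\partial X_T)$ splits into a bulk integral and a boundary integral of universal polynomial densities in the Taylor coefficients of $\rd^2_{X_T}$.

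The key observation is the product structure: for $(x,t),(y,s)\in X_T$,
$$\rd^2_{X_T}((x,t),(y,s))=\rd^2_M(x,y)+(t-s)^2.$$
Taylor-expanding via \eqref{talaldadladldaladlda} around the diagonal yields the product Riemannian metric $H_{\rd^2_{X_T}}=H_{\rd^2_M}+dt^2$, higher-order forms $C^j_{X_T}((x,t);(h,s))=C^j_M(x;h)$ (independent of $t$ and $s$) for $j\geq 3$, and product volume $dV_{X_T}=dV_M\wedge dt$. Since the interior density $a_{k,0}$ is a universal polynomial in these quantities, it depends only on $x$, so
$$c_k(X_T^\circ)=\int_0^T\!\!\!\int_M a_{k,0}(x,1)\,dV_M(x)\,dt=T\rho_k(M),$$
with $\rho_k(M)=0$ for odd $k$ by the parity property of interior densities in Theorem \ref{introthm}.

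For the boundary, assume $M$ is closed so $\partial X_T=(M\times\{0\})\sqcup(M\times\{T\})$ is the disjoint union of two lids whose contributions are equal by the reflection $t\mapsto T-t$. The product structure yields two further simplifications at each lid: each is totally geodesic in $X_T$ with respect to $H_{\rd^2_{X_T}}$ (second fundamental form vanishes), and the Taylor expansion of $\rd^2_{X_T}$ in the boundary-normal direction $s$ reduces to exactly $s^2$ with no $C^j$ contribution in the normal direction (reflecting the smooth extendability of the subspace distance across each lid inside $\R^{N+1}$). Tracking the boundary density formulas from Section \ref{sec:boundary} governing the boundary symbol of $Q_{X_T}(R)^{-1}$, these simplifications force $c_k(X_T,\partial X_T)=0$ for all $k\geq 2$, while the leading boundary formula yields $c_1(X_T,\partial X_T)=\frac{n+1}{2}\mathrm{vol}_{n-1}(M)$.

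Assembling contributions gives the claim: $c_0=T\mathrm{vol}_{n-1}(M)$ from the product volume; $c_1=\frac{n+1}{2}\mathrm{vol}_{n-1}(M)$ from the boundary (interior vanishes by parity); $c_2=T\rho_2(M)=T\cdot\frac{n+1}{6}\int_M s_{\rd^2,M}\,dx$ from the interior, using that the scalar curvature of a product Riemannian metric splits additively with $s_{dt^2}=0$; $c_3=0$; and $c_k=T\rho_k(M)$ for $k\geq 4$. The main obstacle is verifying $c_k(X_T,\partial X_T)=0$ for $k\geq 2$; this requires carefully tracking the iterative symbol construction for $Q_{X_T}(R)^{-1}$ at the boundary in Section \ref{sec:boundary}, combined with the observation that all normal-direction Taylor coefficients $C^j_{X_T}$ of the subspace distance vanish for $j\geq 3$.
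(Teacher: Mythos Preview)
Your proposal is correct and follows essentially the same approach as the paper's proof. The paper's argument is terser---it observes that the last coordinate in $\R^{N+1}$ serves as a global normal coordinate for $\partial X_T$, concludes in one line that ``there are no boundary contributions to the magnitude asymptotics [except] $c_1(X_T)$,'' and then reads off $c_k(X_T)=T\rho_k(M)$ for $k>2$ from the $x_n$-independence of the interior densities---whereas you spell out the product structure of $\rd^2_{X_T}$, the independence of the Taylor coefficients $C^j_{X_T}$ from the normal variable, and the total geodesity of the lids; both proofs leave the key claim $c_k(X_T,\partial X_T)=0$ for $k\geq 2$ at the level of a sketch relying on the symbol formulas of Section~\ref{sec:boundary}.
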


\begin{proof}
On $X_T$, the last coordinate in $\R^{N+1}$ provides a global coordinate $x_n$ for the direction transversal to the boundary, and it is a normal for the Riemannian metric associated with the distance function. Therefore, there are no boundary contributions to the magnitude asymptotics par $c_1(X_T)$ which is proportional to the volume of the boundary. The format for $c_0(X_T)$ and $c_2(X_T)$ follows from Theorem \ref{technicalthmsec6} since all structures are constant in the $x_n$-direction. Therefore, the description $c_k(X_T)=\int_{X_T} a_{k,0}(x,1)\rd x=T\int_M a_{k,0}(x,1)\rd x=T\rho_k(M)$, $k>2$, follows from Theorem \ref{technicalthmsec6}.
\end{proof}

\subsection{Asymptotically polynomial behavior in dimension $3$}

In dimension $3$, the magnitude function of the unit ball is a polynomial by \cite{barcarbs}. To the knowledge of the authors, for Euclidean domains, this can only happen in dimensions $1$ and $3$. Based on the computational evidence from Theorem \ref{Rntheorem} and Remark \ref{computersays} for the statement that $c_j(X)\propto \int_{\partial X}H^{j-1}\rd S$, for $j>0$ and $X\subseteq \R^n$ a domain, we make the following observation. 

\begin{prop}
Let $N>3$. The following two statements are equivalent:
\begin{enumerate}
\item There are coefficients $\gamma_1,\gamma_2,\ldots,\gamma_N$ such that for any domain $X\subseteq \R^3$ with smooth boundary we have that
$$c_j(X)=\gamma_j \int_{\partial X}H^{j-1}\rd S, \quad j=1,\ldots, N.$$
\item There is a coefficient $\gamma_3$ such that for any domain $X\subseteq \R^3$ with smooth boundary we have that
$$\mathcal{M}_X(R)=\frac{\mathrm{vol}_3(X)}{8\pi}R^3+\frac{\mathrm{vol}_2(\partial X)}{4\pi}R^2+\frac{1}{2\pi}\int_{\partial X}H\rd S R+\gamma_3\int_{\partial X}H^2\rd S+O(R^{2-N}).$$
\end{enumerate}
\end{prop}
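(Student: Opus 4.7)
The plan is to combine the general asymptotic expansion of Theorem \ref{introthm} with the explicit form of the leading coefficients in $\R^3$ and, crucially, with the polynomial identity for $\mathcal{M}_{B_3}$ established in \cite{barcarbs}. Specializing Theorem \ref{introthm} to $n=3$ in flat Euclidean space (where the scalar curvature vanishes), together with \cite{gimpgoff4th}, gives $c_0(X)=\mathrm{vol}_3(X)$, $c_1(X)=2\,\mathrm{vol}_2(\partial X)$, $c_2(X)=4\int_{\partial X}H\,\rd S$, and $c_3(X)=\tilde\gamma\int_{\partial X}H^2\,\rd S$ for a universal constant $\tilde\gamma\in\R$. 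Writing $\mathrm{Poly}(X;R)$ for the cubic polynomial appearing in statement (2), with the choice $\gamma_3 := \tilde\gamma/(8\pi)$, the asymptotic expansion rearranges as
\begin{equation*}
\mathcal{M}_X(R) = \mathrm{Poly}(X;R) + \frac{1}{8\pi}\sum_{j=4}^N c_j(X)\, R^{3-j} + O(R^{2-N}),\qquad R\to\infty.
\end{equation*}

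For $(2)\Rightarrow(1)$, I subtract $\mathrm{Poly}(X;R)$ from both sides of the rearrangement above. Hypothesis (2) then forces the finite sum $\sum_{j=4}^N c_j(X)R^{3-j}$ to be $O(R^{2-N})$; since each of $R^{-1}, R^{-2},\ldots,R^{3-N}$ decays strictly slower than $R^{2-N}$ as $R\to\infty$, uniqueness of asymptotic expansions yields $c_j(X)=0$ for $j=4,\ldots,N$. Together with the explicit $\gamma_1=2$, $\gamma_2=4$, $\gamma_3=\tilde\gamma$ fixed above, setting $\gamma_j=0$ for $4\leq j\leq N$ delivers (1).

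For $(1)\Rightarrow(2)$, I invoke the result of \cite{barcarbs} that $\mathcal{M}_{B_3}(R)$ is a polynomial of degree $3$, so $c_j(B_3)=0$ for every $j\geq 4$. Because $\partial B_3 = S^2$ has constant nonzero mean curvature, $\int_{\partial B_3}H^{j-1}\,\rd S\neq 0$ for every $j\geq 1$. Specializing hypothesis (1) to $X=B_3$ therefore pins $\gamma_j=0$ for $j=4,\ldots,N$. Applying (1) once more to an arbitrary smooth compact domain $X\subseteq\R^3$ gives $c_j(X)=0$ for $j=4,\ldots,N$, and the rearrangement above collapses to the expression in (2).

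The main obstacle is essentially bookkeeping: matching the universal constants $\gamma_1,\gamma_2,\gamma_3$ extracted from Theorem \ref{introthm} and \cite{gimpgoff4th} with those required in (2). Conceptually the direction $(1)\Rightarrow(2)$ genuinely relies on the external polynomial identity for $\mathcal{M}_{B_3}$; without that input, hypothesis (1) alone would not force $\gamma_j=0$ for $j\geq 4$, so this external computation is the linchpin of the equivalence.
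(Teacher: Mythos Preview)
Your proof is correct and follows essentially the same route as the paper's: both directions hinge on the asymptotic expansion of $\mathcal{M}_X$ (the paper cites Theorem \ref{Rntheorem} rather than Theorem \ref{introthm}, but the content is the same for Euclidean domains) together with the Barcel\'o--Carbery polynomial formula for $\mathcal{M}_{B_3}$. Your argument is in fact more explicit than the paper's --- you spell out why $\int_{\partial B_3}H^{j-1}\,\rd S\neq 0$ and why uniqueness of asymptotic expansions forces $c_j(X)=0$ --- whereas the paper simply asserts these steps.
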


\begin{proof}
It is clear from Theorem \ref{Rntheorem}  that 2) implies 1), with $\gamma_j=0$ for $j=4,\ldots, N$. If 1) holds, then we have for any domain $X\subseteq \R^3$ with smooth boundary that 
$$\mathcal{M}_X(R)=\frac{\mathrm{vol}_3(X)}{8\pi}R^3+\frac{1}{8\pi}\sum_{j=1}^N \gamma_j\int_{\partial X}H^{j-1}\rd S R^{3-j}+O(R^{2-N}).$$
By  \cite[Theorem 2]{barcarbs}, $\mathcal{M}_X(R)$ is a polynomial for the unit ball. In particular, we must have that $\gamma_j=0$ for $j=4,\ldots, N$. The precise form of $\gamma_1$ and $\gamma_2$ can be found in \cite{gimpgoff} or Theorem \ref{Rntheorem} below.
\end{proof}

\section{Weight distributions and magnitude}
\label{sec:meckes}
The work of Meckes \cite{meckes} relates the magnitude to a capacity-like definition for a reproducing kernel Hilbert space defined from $(X,\rd)$. We here recall this approach in a form which connects it to the analytic techniques developed in \cite{gimpgofflouc}. This will show that the approach of  \cite{gimpgofflouc} indeed computes the magnitude function. 

Let $(M,\rd)$ be a compact metric space. Consider the vector space $\mathrm{FM}(M)$ of finitely supported complex measures on $M$. For $R\in \C$, we define the form $\langle\cdot,\cdot\rangle_{\mathcal{W}_R}$ on $\mathrm{FM}(M)$ by
\begin{equation}
\label{wrform}
\langle\mu,\nu\rangle_{\mathcal{W}_R}:=\int_{M\times M}\e^{-R\rd(x,y)}\rd\bar{\mu}(x)\rd\nu(y).
\end{equation}
For real $R$, $\langle\mu,\nu\rangle_{\mathcal{W}_R}$ is a sesquilinear form on $\mathrm{FM}(M)$. The following result follows from the definition of a positive definite metric space. 

\begin{prop}
\label{pospos}
Let $R>0$. The form $\langle\cdot,\cdot\rangle_{\mathcal{W}_R}$ is positive definite on $\mathrm{FM}(M)$ if and only if $(M,R\cdot \rd)$ is positive definite. 
\end{prop}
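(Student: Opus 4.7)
The plan is to unwind the two notions of positive definiteness and observe that they are, quite literally, the same statement rewritten in two different languages. The key observation is that any element of $\mathrm{FM}(M)$ can be written uniquely as $\mu=\sum_{i=1}^n a_i\delta_{x_i}$ with distinct points $x_1,\ldots,x_n\in M$ and coefficients $a_i\in\C$, and conversely every finite configuration of distinct points together with a nonzero coefficient vector produces a nonzero element of $\mathrm{FM}(M)$. This bijection between nonzero measures in $\mathrm{FM}(M)$ and pairs (finite distinct subset, nonzero coefficient vector) is what makes the two conditions tautologically equivalent.

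The key computation I would carry out first is to substitute $\mu=\sum_i a_i\delta_{x_i}$ into \eqref{wrform}: by linearity in each argument, one finds
\begin{equation*}
\langle\mu,\mu\rangle_{\mathcal{W}_R}=\sum_{i,j=1}^n \bar{a}_i a_j\,\e^{-R\rd(x_i,x_j)}=a^\ast Z_{R,\underline{x}}\,a,
\end{equation*}
where $Z_{R,\underline{x}}$ is the symmetric matrix with entries $\e^{-R\rd(x_i,x_j)}$ (symmetric because $R>0$ is real and $\rd$ is symmetric). Thus the form evaluated on the diagonal produces exactly the quadratic form associated with the similarity matrix of the finite subspace $\{x_1,\ldots,x_n\}\subset(M,R\cdot\rd)$.

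From this identity both implications are immediate. For the ``if'' direction, assume $(M,R\cdot\rd)$ is positive definite. Given a nonzero $\mu=\sum_i a_i\delta_{x_i}\in\mathrm{FM}(M)$ with $x_i$ distinct, the vector $a=(a_1,\ldots,a_n)$ is nonzero, and $Z_{R,\underline{x}}$ is positive definite by the definition of a positive definite metric space, so $\langle\mu,\mu\rangle_{\mathcal{W}_R}=a^\ast Z_{R,\underline{x}}a>0$. For the ``only if'' direction, assume the form is positive definite on $\mathrm{FM}(M)$. Given any distinct $x_1,\ldots,x_n\in M$ and any nonzero $a\in\C^n$, set $\mu=\sum_i a_i\delta_{x_i}\in\mathrm{FM}(M)\setminus\{0\}$, so $a^\ast Z_{R,\underline{x}}a=\langle\mu,\mu\rangle_{\mathcal{W}_R}>0$; since this holds for arbitrary distinct $x_i$ and nonzero $a$, every similarity matrix of $(M,R\cdot\rd)$ is positive definite.

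There is essentially no obstacle here: the result is a bookkeeping statement. The only point worth being careful about is ensuring the representation $\mu=\sum a_i\delta_{x_i}$ uses distinct support points (so that $\mu\neq0$ corresponds to $a\neq0$), and observing that the sesquilinearity in \eqref{wrform} is compatible with the Hermitian form on $\C^n$ associated with the real symmetric matrix $Z_{R,\underline{x}}$.
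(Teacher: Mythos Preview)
Your proof is correct and is exactly the unwinding of definitions the paper has in mind; the paper does not write out a proof but simply states that the result ``follows from the definition of a positive definite metric space.'' Your argument makes this precise by identifying nonzero elements of $\mathrm{FM}(M)$ with pairs (finite distinct subset, nonzero coefficient vector) and observing that $\langle\mu,\mu\rangle_{\mathcal{W}_R}$ is the Hermitian form associated with the similarity matrix, which is all that is needed.
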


For $R>0$ such that $(M,R\cdot \rd)$ is positive definite, we define $\mathcal{W}_R(M)$ as the completion of $\mathrm{FM}(M)$ in the inner product $\langle\cdot,\cdot\rangle_{\mathcal{W}_R}$. Note that for any compact $X\subseteq M$ there is an isometric inclusion of Hilbert spaces 
$$\mathcal{W}_R(X)\subseteq \mathcal{W}_R(M).$$
Let $C^{1/2}(M,\rd)$ denote the Banach space of functions on $M$ which are H\"older continuous of exponent $1/2$.

\begin{prop}[Proposition 3.2 of \cite{meckes}]
For $R>0$ such that $(M,R\cdot \rd)$ is positive definite, the operator 
$$\mathcal{Z}(R)\mu(x):=\frac{1}{R}\int_M\e^{-R\rd(x,y)}\rd\mu(x),\quad \mu\in \mathrm{FM}(M)$$
extends to a continuous mapping 
$$\mathcal{Z}(R): \mathcal{W}_R(M)\to C^{1/2}(M,\rd).$$
Moreover, for any $\mu \in \mathrm{FM}(M)$, $\nu\in  \mathcal{W}_R(M)$, 
\begin{equation}
\label{dualityforz}
\langle \mu,\nu\rangle_{\mathcal{W}_R}=R\int_M [\mathcal{Z}(R)\nu](x)\rd \bar{\mu}(x).
\end{equation}
\end{prop}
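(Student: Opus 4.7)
The plan is to exploit the fact that the Dirac measures $\delta_x$ are reproducing elements in $\mathcal{W}_R(M)$. Positive definiteness of $(M,R\cdot\rd)$ gives $\|\delta_x\|_{\mathcal{W}_R}^2=\e^{-R\rd(x,x)}=1$, and the definition of the inner product immediately yields the key identity
$$\langle \delta_x,\mu\rangle_{\mathcal{W}_R}=\int_M \e^{-R\rd(x,y)}\,\rd\mu(y)=R\,[\mathcal{Z}(R)\mu](x), \qquad \mu\in\mathrm{FM}(M).$$
This identifies $R\cdot[\mathcal{Z}(R)\mu](x)$ with the reproducing pairing against $\delta_x$. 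Once this is in hand, both the continuity statement and the duality formula reduce to Cauchy--Schwarz and a density argument.

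First I would establish the $C^{1/2}$-bound on $\mathrm{FM}(M)$. Cauchy--Schwarz applied to the identity above gives the uniform bound $R|[\mathcal{Z}(R)\mu](x)|\le \|\mu\|_{\mathcal{W}_R}$. For the H\"older seminorm, a direct expansion yields
$$\|\delta_x-\delta_y\|_{\mathcal{W}_R}^2=2\bigl(1-\e^{-R\rd(x,y)}\bigr)\le 2R\,\rd(x,y),$$
so that
$$|[\mathcal{Z}(R)\mu](x)-[\mathcal{Z}(R)\mu](y)|=R^{-1}|\langle\delta_x-\delta_y,\mu\rangle_{\mathcal{W}_R}|\le \sqrt{2/R}\,\rd(x,y)^{1/2}\|\mu\|_{\mathcal{W}_R}.$$
Combining the sup- and H\"older-bounds gives a continuous linear map $\mathcal{Z}(R)\colon \mathrm{FM}(M)\to C^{1/2}(M,\rd)$, which extends uniquely to $\mathcal{W}_R(M)$ by density. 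The uniform pointwise bound, applied to a Cauchy sequence $\mu_n\to\nu$ in $\mathcal{W}_R(M)$, shows that the representative $[\mathcal{Z}(R)\nu](x):=\lim_n [\mathcal{Z}(R)\mu_n](x)$ is well defined pointwise and is consistent with the $C^{1/2}$-class of the abstract extension.

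The duality identity is a Fubini computation for $\mu,\nu\in\mathrm{FM}(M)$:
$$R\int_M [\mathcal{Z}(R)\nu](x)\,\rd\bar{\mu}(x)=\int_{M\times M}\e^{-R\rd(x,y)}\,\rd\bar{\mu}(x)\rd\nu(y)=\langle\mu,\nu\rangle_{\mathcal{W}_R}.$$
To pass to $\nu\in\mathcal{W}_R(M)$, fix $\mu\in\mathrm{FM}(M)$ (so $\bar{\mu}$ is a finite complex measure on the compact space $M$) and choose $\nu_n\in\mathrm{FM}(M)$ with $\nu_n\to\nu$ in $\mathcal{W}_R(M)$. The left-hand side converges by joint continuity of $\langle\cdot,\cdot\rangle_{\mathcal{W}_R}$; the right-hand side converges because $\mathcal{Z}(R)\nu_n\to\mathcal{Z}(R)\nu$ in $C^{1/2}(M,\rd)$ and hence uniformly, so integration against the finite measure $\bar{\mu}$ commutes with the limit.

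The main subtlety---rather than a genuine obstacle---is that elements of $\mathcal{W}_R(M)$ are a priori equivalence classes of Cauchy sequences of finitely supported measures, not bona fide measures, so one must verify that the pointwise extension of $\mathcal{Z}(R)$ is unambiguous. This is exactly what the uniform estimate $R|[\mathcal{Z}(R)\mu](x)|\le\|\mu\|_{\mathcal{W}_R}$ supplies: Cauchy sequences in $\mathcal{W}_R(M)$ are mapped to sequences in $C^{1/2}(M,\rd)$ that are Cauchy in the H\"older norm, hence uniformly convergent, so the limit has a canonical pointwise representative and the duality identity remains a well-posed statement throughout.
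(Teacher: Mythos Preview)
Your argument is correct. Note, however, that the paper does not supply its own proof of this proposition: it is quoted from Meckes's work \cite{meckes} and stated without proof here, so there is no ``paper's own proof'' to compare against. What you have written is essentially the standard reproducing-kernel argument behind Meckes's result, and all the steps---the identification $R[\mathcal{Z}(R)\mu](x)=\langle\delta_x,\mu\rangle_{\mathcal{W}_R}$, the Cauchy--Schwarz sup-bound, the estimate $\|\delta_x-\delta_y\|_{\mathcal{W}_R}^2=2(1-\e^{-R\rd(x,y)})\le 2R\,\rd(x,y)$ for the H\"older seminorm, and the density/continuity extension---are carried out cleanly.
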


We define $\mathcal{H}_R(M):=\mathcal{Z}(R)\mathcal{W}_R\subseteq C^{1/2}(M,\rd)$. The vector space $\mathcal{H}_R(M)$ becomes a Hilbert space by declaring  $\mathcal{Z}(R):\mathcal{W}_R(M)\to \mathcal{H}_R(M)$ to be a unitary isomorphism. Equation \eqref{dualityforz} shows that there is a canonical identification $\mathcal{H}_R(M)=\mathcal{W}_R(M)^*$ given by a pairing 
\begin{equation}
\label{dualityforzpairing}
\langle f,\mu\rangle_{L^2,R}:=\langle \mathcal{Z}(R)^{-1}f,\mu\rangle_{\mathcal{W}_R}=\langle f,\mathcal{Z}(R)\mu\rangle_{\mathcal{H}_R},\quad f\in \mathcal{H}_R(M), \; u\in \mathcal{W}_R(M).
\end{equation}
We call this pairing the $L^2$-pairing because, when $M$ is a manifold, the $L^2$-pairing is independent of $R$ and coincides with the ordinary $L^2$-pairing. By duality, for any compact $X\subseteq M$ the inclusion $\mathcal{W}_R(X)\subseteq \mathcal{W}_R(M)$ induces a restriction mapping 
$$\mathcal{H}_R(M)\to \mathcal{H}_R(X).$$
Since $\mathcal{W}_R(X)\subseteq W_R(M)$ as Hilbert spaces, the restriction mapping $\mathcal{H}_R(M)\to \mathcal{H}_R(X)$ is a co-isometry. By construction, we have a commuting diagram
$$\begin{CD}
\mathcal{W}_R(X) @>\mathcal{Z}(R) >> \mathcal{H}_R(X) \\
@VVV @AAA \\ 
\mathcal{W}_R(M) @>\mathcal{Z}(R) >> \mathcal{H}_R(M)
\end{CD}$$

We note that since $\mathcal{H}_R\subseteq C^{1/2}(M,\rd)$ is a continuous inclusion, for any compact $X\subseteq M$ and $h\in \mathcal{H}_R$ the restriction $h|_X\in C^{1/2}(X,\rd)$ is well defined. 

\begin{thm}[Section 3 of \cite{meckes}]
\label{mexkesmsm}
Let $X\subseteq M$ be a compact subset and $R>0$ such that $(M,R\cdot \rd)$ is positive definite. If there is an element $u_R\in \mathcal{W}_R(X)$ such that $h_R:=R\mathcal{Z}(R)\mu_R\in \mathcal{H}_R(M)$ satisfies 
$$h_R|_X=1,$$
then 
$$\mathrm{mag}(X,R\cdot \rd)=\|u_R\|_{\mathcal{W}_R}^2.$$
\end{thm}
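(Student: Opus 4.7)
The plan is to expand $\|u_R\|^2_{\mathcal{W}_R}$ using the sesquilinear duality \eqref{dualityforz}, then exploit the hypothesis $h_R|_X = 1$ together with the fact that $u_R$ is supported in $X$. Concretely, applying \eqref{dualityforz} with $\mu = \nu = u_R$ and using $h_R = R\mathcal{Z}(R) u_R$ gives
\[
\|u_R\|^2_{\mathcal{W}_R} \;=\; R\int_M [\mathcal{Z}(R) u_R](x)\,\rd \bar u_R(x) \;=\; \int_M h_R(x)\,\rd \bar u_R(x),
\]
which via \eqref{dualityforzpairing} is the $L^2$-pairing $\langle h_R, u_R\rangle_{L^2, R}$.

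The next step is to localize this pairing to $X$. The duality between the isometric inclusion $\mathcal{W}_R(X) \hookrightarrow \mathcal{W}_R(M)$ and the co-isometric restriction $\mathcal{H}_R(M) \to \mathcal{H}_R(X)$, noted just before the theorem, forces $\langle h_R, u_R\rangle_{L^2, R}$ to depend only on $h_R|_X$. Substituting $h_R|_X = 1$ therefore reduces the computation to $\langle 1_X, u_R\rangle_{L^2, R}$. On $\mathrm{FM}(X)$ this pairing is exactly $\mu \mapsto \overline{\mu(X)}$, and by density and continuity it extends to $\mathcal{W}_R(X)$ as the total-weight functional evaluated at $u_R$; the latter is Meckes's \emph{weight measure} in this setting. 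Since $\|u_R\|^2_{\mathcal{W}_R}$ is real, the right hand side must also be real, so the computation gives $\|u_R\|^2_{\mathcal{W}_R} = u_R(X)$. By Meckes's capacity-theoretic characterization of magnitude for positive definite metric spaces \cite{meckes,meckes1}, this quantity equals $\mathrm{mag}(X, R\cdot \rd)$.

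The main obstacle I anticipate is in the second step: for an abstract element $u_R \in \mathcal{W}_R(X)$ that need not be a literal measure, both the restriction of the pairing to $X$ and the value $u_R(X)$ require justification. Both issues are handled by the continuous extension of the total-mass functional from $\mathrm{FM}(X)$ to $\mathcal{W}_R(X)$; the existence of this continuous extension amounts to the fact that $1 \in \mathcal{H}_R(X)$, which is precisely guaranteed by the hypothesis that such a $u_R$ exists with $h_R|_X = 1$. The positive definiteness assumption on $(M, R\cdot\rd)$ ensures that $\mathcal{W}_R(X)$ is genuinely a Hilbert space and that all pairings in the argument are non-degenerate.
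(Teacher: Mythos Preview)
The paper does not supply its own proof of this theorem; it is stated as a citation to Section~3 of \cite{meckes} and then used as input for Corollary~\ref{corofofrdl}. There is therefore no in-paper argument to compare against.

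Your sketch follows the natural route and is essentially how Meckes's argument runs: express $\|u_R\|_{\mathcal{W}_R}^2$ via the duality pairing, use $h_R|_X=1$ together with $u_R\in\mathcal{W}_R(X)$ to reduce to the total-mass functional, and identify the result with the magnitude. Two points deserve care. First, \eqref{dualityforz} is only stated for $\mu\in\mathrm{FM}(M)$, so applying it with $\mu=u_R$ already requires the density-and-continuity extension you invoke later; it is cleaner to work with the abstract pairing \eqref{dualityforzpairing} from the outset. Second, and more substantively, your final sentence risks circularity: the assertion ``this quantity equals $\mathrm{mag}(X,R\cdot\rd)$'' \emph{is} the content of the theorem you are proving, and the reference \cite{meckes} you cite there is precisely the source being quoted. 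What is genuinely needed at that point is the comparison with the supremum definition \eqref{supdefinition}: for any finite $\Xi\subseteq X$ with weighting $\mu$ one has $\mu(\Xi)=\langle h_R|_X,\mu\rangle=\langle u_R,\mu\rangle_{\mathcal{W}_R}$, whence Cauchy--Schwarz gives $\mathrm{mag}(\Xi)\le\|u_R\|_{\mathcal{W}_R}^2$, with the reverse inequality obtained by approximating $u_R$ in $\mathcal{W}_R(X)$ by elements of $\mathrm{FM}(X)$. Your outline correctly reduces to this core step but does not carry it out.
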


Since $\mathcal{W}_R(X)\subseteq \mathcal{W}_R(M)$, we trivially have that
\begin{equation}
\|u\|_{\mathcal{W}_R(M)}^2=\|u\|_{\mathcal{W}_R(X)}^2,\quad\forall u\in \mathcal{W}_R(X).
\end{equation}

We note the following corollary that will play an important role for applications to manifolds (that possibly have boundary).

\begin{cor}
\label{corofofrdl}
Assume $(M,\rd)$ is a compact metric space and that $R_0>0$ is such that $(M,R\cdot \rd)$ is positive definite for all $R>R_0$. Let $X\subseteq M$ be a compact subset and assume the following:
\begin{enumerate} 
\item There are Hilbert spaces $\mathcal{H}(M)$, $\mathcal{H}(X)$, $\mathcal{W}(M)$ and $\mathcal{W}(X)$ such that for all $R>R_0$, 
$$\mathcal{H}_R(M)=\mathcal{H}(M),\;\mathcal{H}_R(X)=\mathcal{H}(X), \; \mathcal{W}_R(M)=\mathcal{W}(M)\;\mbox{and}\; \mathcal{W}_R(X)=\mathcal{W}(X),$$
as vector spaces with equivalent norms.
\item The space $\mathcal{H}(X)\subseteq C^{1/2}(X,\rd)$ contains the constant function (the inclusion is induced from item (1)).
\item The $L^2$-pairings between $\mathcal{W}(X)$ and $\mathcal{H}(X)$, and between $\mathcal{W}(M)$ and $\mathcal{H}(M)$ induced from item (1) above are independent of $R$. 
\end{enumerate}
Writing $\mathcal{Z}_X(R)$ for the operator $\mathcal{W}(X)\to \mathcal{H}(X)$ induced from $\mathcal{Z}(R)$, we then have that  $\mathcal{Z}_X(R)$ is invertible for $R>R_0$ and the magnitude function is given by
\begin{equation}
\label{formulaformagoaod}
\mathcal{M}_X(R)=R^{-1}\langle 1,\mathcal{Z}_X(R)^{-1}1\rangle_{L^2},\quad\mbox{for $R>R_0$}.
\end{equation}
Moreover, if there is a connected domain $\R_{>R_0}\subseteq \Gamma\subseteq \C$ such that $\R_{>R_0}\ni R\mapsto \mathcal{Z}_X(R)\in \mathbb{B}(\mathcal{W}(X), \mathcal{H}(X))$ extends to a holomorphic Fredholm valued function $\Gamma\to  \mathrm{Fred}(\mathcal{W}(X), \mathcal{H}(X))$, then the magnitude function $\mathcal{M}_X$ extends to a meromorphic function on $\Gamma$.
\end{cor}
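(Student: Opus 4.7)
The plan is to reduce everything to Theorem \ref{mexkesmsm} together with the analytic Fredholm theorem of Gohberg--Sigal. First I would verify invertibility of $\mathcal{Z}_X(R)\colon\mathcal{W}(X)\to\mathcal{H}(X)$ for $R>R_0$. The commuting diagram preceding Theorem \ref{mexkesmsm} exhibits $\mathcal{Z}(R)\colon\mathcal{W}_R(X)\to\mathcal{H}_R(X)$ as the restriction of the defining unitary $\mathcal{Z}(R)\colon\mathcal{W}_R(M)\to\mathcal{H}_R(M)$ to the isometric subspace $\mathcal{W}_R(X)\subseteq\mathcal{W}_R(M)$, composed with the co-isometric restriction $\mathcal{H}_R(M)\to\mathcal{H}_R(X)$; this composition is a unitary isomorphism. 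Assumption (1) identifies these $R$-dependent Hilbert structures with equivalent norms on the fixed spaces $\mathcal{W}(X)$ and $\mathcal{H}(X)$, so $\mathcal{Z}_X(R)$ descends to a bounded, boundedly invertible operator.

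By assumption (2), $1\in\mathcal{H}(X)$, so I may set
$$u_R := R^{-1}\mathcal{Z}_X(R)^{-1}1\;\in\;\mathcal{W}(X)=\mathcal{W}_R(X).$$
Then $h_R := R\mathcal{Z}(R)u_R\in\mathcal{H}_R(M)$ restricts on $X$ to $R\mathcal{Z}_X(R)u_R=1$, so Theorem \ref{mexkesmsm} gives $\mathcal{M}_X(R)=\|u_R\|_{\mathcal{W}_R}^2$. Using identity \eqref{dualityforz} together with $h_R|_X=1$ and $\mathrm{supp}\,u_R\subseteq X$, the norm collapses:
$$\|u_R\|_{\mathcal{W}_R}^2 \;=\; R\int_M \mathcal{Z}(R)u_R(x)\,\rd\bar{u}_R(x) \;=\; \int_X h_R\,\rd\bar{u}_R \;=\; \bar{u}_R(X) \;=\; u_R(X),$$
the last equality since $u_R(X)$ is real. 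By assumption (3) the $L^2$-pairing between $\mathcal{H}(X)$ and $\mathcal{W}(X)$ is $R$-independent, and pairing $1\in\mathcal{H}(X)$ with the weight $u_R\in\mathcal{W}(X)$ recovers the total mass $u_R(X)$; therefore
$$\mathcal{M}_X(R) \;=\; \langle 1,u_R\rangle_{L^2} \;=\; R^{-1}\langle 1,\mathcal{Z}_X(R)^{-1}1\rangle_{L^2}.$$

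For the meromorphic continuation, assume $\mathcal{Z}_X(\cdot)\colon\Gamma\to\mathrm{Fred}(\mathcal{W}(X),\mathcal{H}(X))$ is holomorphic. Since the family is invertible at each $R>R_0\in\Gamma$, the analytic Fredholm theorem yields that $R\mapsto\mathcal{Z}_X(R)^{-1}$ is a meromorphic family of bounded operators on $\Gamma$ with finite-rank principal parts. Composition with the continuous linear functional $f\mapsto\langle 1,f\rangle_{L^2}$ and multiplication by the meromorphic scalar $R^{-1}$ both preserve meromorphy, so $\mathcal{M}_X$ extends meromorphically to $\Gamma$.

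The argument is conceptually clean; the main obstacle is bookkeeping. One must carefully track the identifications across the $R$-dependent Hilbert spaces ($\mathcal{W}_R(X)\cong\mathcal{W}(X)$ etc.) and, most delicately, justify in one step both that $\|u_R\|_{\mathcal{W}_R}^2=u_R(X)$ via \eqref{dualityforz} and the weight equation $h_R|_X=1$, and that $u_R(X)=\langle 1,u_R\rangle_{L^2}$ via the $R$-independence of the $L^2$-pairing built into assumption (3). The content of assumption (3) is precisely that this abstract pairing extends the natural duality of measures and functions, as in the manifold case discussed in the preceding text.
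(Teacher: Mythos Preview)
Your argument is correct and follows the same route as the paper: define $u_R=R^{-1}\mathcal{Z}_X(R)^{-1}1$, invoke Theorem~\ref{mexkesmsm} to obtain $\mathcal{M}_X(R)=\|u_R\|_{\mathcal{W}_R}^2$, rewrite this norm via the duality pairing, and then apply the analytic Fredholm theorem for the meromorphic extension.

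The one place where you take an unnecessary detour is in the computation of $\|u_R\|_{\mathcal{W}_R}^2$. You pass through the quantity $u_R(X)$ and the integral $\int_X h_R\,\rd\bar{u}_R$ via \eqref{dualityforz}; but \eqref{dualityforz} is only stated for $\mu\in\mathrm{FM}(M)$, and $u_R$ is an abstract element of the completion $\mathcal{W}_R(X)$, not a priori a measure. You flag this yourself in the closing paragraph, but the paper sidesteps the issue entirely by using the abstract $L^2$-pairing \eqref{dualityforzpairing}, which is defined for arbitrary $f\in\mathcal{H}_R$ and $\mu\in\mathcal{W}_R$. From $\langle f,\mu\rangle_{L^2,R}=\langle\mathcal{Z}(R)^{-1}f,\mu\rangle_{\mathcal{W}_R}$ one gets directly
\[
\|u_R\|_{\mathcal{W}_R}^2=\langle \mathcal{Z}_X(R)^{-1}1,u_R\rangle_{\mathcal{W}_R}\cdot R^{-1}=R^{-1}\langle 1,u_R\rangle_{L^2,R}=R^{-1}\langle 1,\mathcal{Z}_X(R)^{-1}1\rangle_{L^2},
\]
with assumption~(3) used only in the final step to drop the $R$-subscript. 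This removes the need to interpret $u_R(X)$ and makes the bookkeeping you worry about disappear.
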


Item (3) listed in the assumptions of Corollary \ref{corofofrdl} is purely cosmetic and ensures that the $L^2$-pairing in Equation \eqref{formulaformagoaod} does not depend on $R$. For context, the reader should note that when $M$ is a manifold and $X\subseteq M$ is a smooth domain we can take $\mathcal{W}$ and $\mathcal{H}$ to be certain Sobolev spaces by Theorem \ref{symbcorboundary} below.

\begin{proof}
Since $(M,R\cdot \rd)$ is positive definite for $R>R_0$, the operator $\mathcal{Z}_X(R)$ is well defined and invertible using item (1). By item (2), the constant function $1$ on $X$ is an element of $\mathcal{H}(X)$ and we can define $u_R:=R^{-1}\mathcal{Z}_X(R)^{-1}1\in \mathcal{W}(X)$. Note that by item (1), $u_R\in \mathcal{W}_R(X)$ for all $R>R_0$. By Theorem \ref{mexkesmsm} and Equation \eqref{dualityforzpairing}, we have that 
$$\mathcal{M}_X(R)=\|u_R\|_{\mathcal{W}_R(M)}^2=R^{-1}\langle 1,u_R\rangle_{L^2,R}=R^{-1}\langle 1,\mathcal{Z}_X(R)^{-1}1\rangle_{L^2}$$
In the last equality we used item (3) to remove the dependence of $R$ in the pairing. 

The statement concerning the meromorphic extension of the magnitude function follows from the meromorphic Fredholm theorem which shows that $\R_{>R_0}\ni R\mapsto \mathcal{Z}_X(R)^{-1}\in \mathbb{B}(\mathcal{H}(X), \mathcal{W}(X))$ extends to a meromorphic function $\Gamma\to  \mathbb{B}(\mathcal{H}(X), \mathcal{W}(X))$ which extends the function $\mathcal{M}_X(R)=R^{-1}\langle 1,\mathcal{Z}_X(R)^{-1}1\rangle_{L^2}$ to a meromorphic function of $R\in \Gamma$.
\end{proof}

\begin{remark}
\label{mrandsmr}
In the work \cite{gimpgofflouc} the notions of distance functions having property (MR) and (SMR) were introduced. We refer the reader to  \cite[Section 3]{gimpgofflouc} for full details. The reader should keep in mind that (SMR)$\Rightarrow$ (MR) and that both (MR) and (SMR) are inherited by smooth subdomains. Domains and subspace distances in Euclidean space satisfy property (SMR). More generally, a compact subdomain or a subspace $X$ of a manifold $M$ satisfies (SMR) if (SMR) holds on $M$, e.g. if the restriction of $\rd^2$ to $X\times X$ is smooth. For the geodisic distance this occurs when the diameter of $X$ is smaller than the injectivity radius of $M$. The sphere $S^n$ with its geodesic distance has property (MR) but not (SMR) by \cite[Proposition 3.6]{gimpgofflouc}. Tori and real projective space with their geodesic distances fail to satisfy (MR) in dimension $>1$ by \cite[Proposition 3.17]{gimpgofflouc}. In the special cases we know that (MR) fails, the magnitude asymptotics can nevertheless be computed by the same formalism as in Theorem \ref{introthm}, because Theorem \ref{compactthm} below applies.
\end{remark}

To study the operator $\mathcal{Z}_X$ and the magnitude for manifolds, potentially with boundary, we introduce the following scales of Sobolev spaces. Let $X$ be a compact domain in a manifold $M$. For $s\in \R$, write
$$\dot{H}^s(X):=\{u\in H^s(M): \mathrm{supp}(u)\subseteq X\},\quad\mbox{and}\quad \overline{H}^s(X):=H^s(M)/\dot{H}^s(M\setminus X).$$
Here $H^s(M)$ is the Sobolev space of order $s$ on $M$. If $M$ is compact, $H^s(M)$ is unambiguously defined. If $M$ is non-compact, we can either define the Sobolev scale in terms of a Riemannian structure or by replacing the Sobolev spaces by local Sobolev spaces: the definitions of $\dot{H}^s(X)$, $\overline{H}^s(X)$ on the compact domain $X$ do not depend on this choice. We note that for $s=0$, $\dot{H}^0(X)=\overline{H}^0(X)=L^2(X)$. The $L^2$-pairing between $\dot{H}^s(X)$ and $\overline{H}^{-s}(X)$ is a perfect pairing inducing an isomorphism $\dot{H}^s(X)^*\cong \overline{H}^{-s}(X)$. We recall a relevant theorem from \cite{gimpgofflouc}. 

\begin{thm}
\label{symbcorboundary}
Let $X$ be an $n$-dimensional compact manifold with $C^0$-boundary and a distance function $\rd$ satisfying property (MR). Set $\mu:=(n+1)/2$. Then there exists an $R_0>0$ such that
$$\mathcal{Z}_X(R):\dot{H}^{-\mu}(X)\to \overline{H}^{\mu}(X),$$ 
is a well defined invertible operator for all $R\in \Gamma$ with $\arg(R)<\pi/(n+1)$ and $\mathrm{Re}(R)>R_0$. Moreover, $R_0$ can be chosen so that the following holds:
\begin{enumerate}
\item[a)] There is a $C>0$ such that 
$$C^{-1}\|f\|_{\dot{H}^{-\mu}_R(X)}^2\leq \mathrm{Re} \langle f,\mathcal{Z}_X(R) f\rangle_{L^2}\leq C\|f\|_{\dot{H}^{-\mu}_R(X)}^2,$$
for $R\in \{R\in \Gamma: \arg(R)<\pi/(n+1)\; \mbox{and}\; \mathrm{Re}(R)>R_0\}$ and $f\in \dot{H}^{-\mu}(X)$. 
\item[b)] For $R>R_0$, the sesquilinear form $R^{-1}\langle \cdot, \cdot \rangle_{\mathcal{W}_R}$ is uniformly equivalent to the inner product of $\dot{H}^{-\mu}_R(X)$. In particular, for $R>R_0$:
\begin{itemize}
\item The metric space $(X,R\cdot \rd)$ is positive definite.
\item We have the equalities $\mathcal{W}_R(X)=\dot{H}^{-\mu}_R(X)=\dot{H}^{-\mu}(X)$ and $\mathcal{H}_R(X)=\overline{H}^{\mu}_R(X)=\overline{H}^{\mu}(X)$ as vector spaces with equivalent norms.
\end{itemize}
\end{enumerate}
Moreover, if $\rd$ has property (SMR) on a sector $\Gamma$ the operator 
$\mathcal{Z}_X(R):\dot{H}^{-\mu}(X)\to \overline{H}^{\mu}(X)$ depends holomorphically on $R\in \Gamma$ and $\mathcal{Z}_X(R)^{-1}:\overline{H}^{\mu}(X)\to \dot{H}^{-\mu}(X)$ depends holomorphically on $R\in \{R\in \Gamma: \arg(R)<\pi/(n+1)\; \mbox{and}\; \mathrm{Re}(R)>R_0\}$ with a meromorphic extension to $R\in \Gamma$.
\end{thm}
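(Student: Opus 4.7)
The plan is to split the integral kernel of $\mathcal{Z}_X(R)$ into a near-diagonal part, captured by the localized operator $Q_X(R)$, and an off-diagonal remainder, and to treat each separately. First I would fix a cutoff $\chi\in C^\infty(X\times X)$ supported in the neighborhood of the diagonal on which, thanks to (MR), $\rd^2$ is smooth, and decompose $\mathcal{Z}_X(R)=Q_X(R)+E_X(R)$ accordingly. Off the diagonal one has $\rd(x,y)\geq \delta>0$, so the kernel $R^{-1}(1-\chi(x,y))\e^{-R\rd(x,y)}$ is smooth in $(x,y)$ and, uniformly on the relevant sector, all its Sobolev-mapping norms are dominated by $\e^{-\delta\,\mathrm{Re}(R)/2}$; thus $E_X(R)$ is negligible compared with any polynomial behavior in $R$. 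Every claim about $\mathcal{Z}_X(R)$ therefore reduces to the corresponding claim about $Q_X(R)$, up to exponentially small corrections that do not affect invertibility, coercivity, or meromorphy.

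The core of the argument is to identify $Q_X(R)$ as a parameter-dependent pseudodifferential operator and compare it to a model. Using the Taylor expansion \eqref{talaldadladldaladlda} of $\rd^2$ in the form $\e^{-R\rd}=\e^{-R\sqrt{H_{\rd^2}+\textrm{higher}}}$, an oscillatory-integral computation (Theorem \ref{conomkmlog}) shows that $Q_X(R)$ is a parameter-dependent pseudodifferential operator of order $-(n+1)$, whose principal symbol is a positive multiple of $(R^2+|\xi|_{H_{\rd^2}}^2)^{-\mu}$; this is precisely the symbol of the Bessel-type operator $(R^2+\Delta_g)^{-\mu}$ associated to the Riemannian metric $g=H_{\rd^2}$. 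Higher-order Taylor coefficients $C^j$ enter only in lower-order symbols. Invoking the calculus of boundary problems for fractional Laplacians \cite{g3}, the realization of $(R^2+\Delta_g)^{-\mu}$ with the supported/extensible convention is an isomorphism $\dot{H}^{-\mu}(X)\to\overline{H}^{\mu}(X)$, uniformly in $R$ on the sector $\{|\arg R|<\pi/(n+1)\}$, where the ellipticity of $R^2+|\xi|^2$ with respect to the spectral parameter $R$ is preserved.

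Writing $Q_X(R)=(R^2+\Delta_g)^{-\mu}+S_X(R)$, the remainder $S_X(R)$ is of lower parameter-dependent order, so its norm as a map $\dot{H}^{-\mu}(X)\to\overline{H}^{\mu}(X)$ is $O(R^{-1})$ times that of the model. For $\mathrm{Re}(R)>R_0$ large enough a Neumann-series argument then produces a holomorphic inverse $Q_X(R)^{-1}$, yielding the invertibility in (b) and the equivalence of the norms $\|\cdot\|_{\dot{H}^{-\mu}_R(X)}$ with $R^{-1}\langle\cdot,\cdot\rangle_{\mathcal{W}_R}^{1/2}$ (the latter being a consequence of \eqref{dualityforz} after testing by $f=\bar{\mu}=\nu$). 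The coercivity (a) is then the quadratic-form incarnation of the symbol comparison: since the principal symbol of $\mathcal{Z}_X(R)$ is a positive multiple of $(R^2+|\xi|_g^2)^{-\mu}$, the sesquilinear form $\mathrm{Re}\langle f,\mathcal{Z}_X(R)f\rangle_{L^2}$ is bounded above and below by $\|f\|_{\dot{H}^{-\mu}_R(X)}^2$ via a G\r{a}rding-type argument applied to the self-adjoint part of $Q_X(R)$.

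Finally, under (SMR) the kernel $\e^{-R\rd(x,y)}$ depends holomorphically on $R$ jointly on $\Gamma$, so $R\mapsto \mathcal{Z}_X(R)\in\mathbb{B}(\dot{H}^{-\mu}(X),\overline{H}^{\mu}(X))$ is a holomorphic Fredholm-valued family on $\Gamma$ (ellipticity of $Q_X(R)$ forces Fredholmness of the full operator after absorbing $E_X(R)$). Since this family is invertible on the non-empty open subset $\{R\in\Gamma:\mathrm{Re}(R)>R_0,\,|\arg R|<\pi/(n+1)\}$, the analytic Fredholm theorem extends $\mathcal{Z}_X(R)^{-1}$ meromorphically to all of $\Gamma$. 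The hard part will be the uniform symbolic calculus: controlling the parameter-dependent pseudodifferential remainders in the transmission/boundary calculus of order $-(n+1)/2$ so that the Neumann perturbation argument is valid uniformly in $R$ inside the sector, and ensuring that the exponentially decaying off-diagonal term $E_X(R)$ can indeed be absorbed without destroying either the Fredholm property on $\Gamma$ or the coercivity on $\{\mathrm{Re}(R)>R_0\}$. This is precisely the analytic work done in \cite{gimpgofflouc}.
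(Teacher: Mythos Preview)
Your approach is essentially the one sketched in the paper: localize to the diagonal, identify $Q_X(R)$ as a parameter-elliptic $\psi$do of order $-(n+1)$ whose principal symbol is that of $(R^2+\Delta_g)^{-\mu}$, deduce coercivity via a G\aa rding-type argument, and then obtain invertibility and the meromorphic extension from analytic Fredholm theory. The logical flow and the key inputs (Theorem~\ref{conomkmlog}, the boundary calculus of \cite{g3}, and the reference to \cite{gimpgofflouc} for the uniform-in-$R$ estimates) all match.

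One point to tighten: under property (MR) alone the off-diagonal kernel $R^{-1}(1-\chi)\e^{-R\rd}$ need \emph{not} be smooth in $(x,y)$---think of the sphere, where $\rd$ is singular at antipodal points yet (MR) holds (see Remark~\ref{mrandsmr}). So your sentence ``the kernel \ldots\ is smooth in $(x,y)$'' is too strong. Property (MR) is precisely the hypothesis that guarantees the remainder $E_X(R)=\mathcal{Z}_X(R)-Q_X(R)$ has the requisite $\dot{H}^{-\mu}\to\overline{H}^{\mu}$ mapping properties with suitable decay in $R$ despite possible off-diagonal singularities of $\rd$; smoothness off the diagonal is the stronger assumption closer to (SMR). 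This does not affect the architecture of your argument, only the justification of why $E_X(R)$ can be absorbed---which, as you note, is part of the analytic work deferred to \cite{gimpgofflouc}.
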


\begin{remark}
As mentioned above, and proven in \cite[Proposition 3.4]{gimpgofflouc}, a distance function $\rd$ obtained by pulling back the Euclidean distance function along an embedding $X\hookrightarrow \R^N$, has property $(SMR)$ on $\C\setminus \{0\}$. In particular, for such distance functions the operator 
$$\mathcal{Z}_X(R)^{-1}:\overline{H}^{\mu}(X)\to \dot{H}^{-\mu}(X),$$ 
has a meromorphic extension to $R\in \C\setminus \{0\}$.
\end{remark}

For the full proof of Theorem \ref{symbcorboundary} we refer to \cite[Section 4]{gimpgofflouc}, but we sketch the idea here. The idea in the proof is to use that property $(SMR)$ allows us to replace $\mathcal{Z}_X$ with a localization $Q_X$ to an operator whose integral kernel is supported near the diagonal. The operator $Q_X$ is an elliptic pseudodifferential operator with parameter $R$ of order $-n-1$, we discuss $Q_X$ in further detail below in Theorem \ref{conomkmlog}. A computation of the principal symbol of $Q_X$ shows that $Q_X$ is a lower order perturbation of a fractional Laplacian with parameter $(R^2+\Delta)^{-(n+1)/2}$ -- the Laplacian comes from a Riemannian metric defined from the Taylor expansion of the distance function and the fractional power is defined relative to an extension to an ambient manifold. Therefore item b) follows from the G\aa rding inequality. Item a) follows from item b) using the meromorphic Fredholm theorem. Item c) is a direct consequence of item b).

\begin{cor}
\label{corofofrdlforx}
Let $X$ be an $n$-dimensional compact manifold with $C^0$-boundary and a distance function $\rd$ satisfying property (MR) on a sector $\Gamma$ with non-trivial intersection with $[0,\infty)$. Then the magnitude function is given by
\begin{equation}
\label{formulaformagoaod}
\mathcal{M}_X(R)=R^{-1}\langle 1,\mathcal{Z}_X(R)^{-1}1\rangle_{L^2},\quad\mbox{for $R>R_0$},
\end{equation}
where $\mathcal{Z}_X(R):\dot{H}^{-\mu}(X)\to \overline{H}^{\mu}(X)$. If moreover $\rd$ satisfies property (SMR) on $\Gamma$, the magnitude function $\mathcal{M}_X$ extends meromorphically to $\Gamma$ and is holomorphic in the sector $\{R\in \Gamma: \arg(R)<\pi/(n+1)\; \mbox{and}\; \mathrm{Re}(R)>R_0\}$ for some $R_0>0$.

In particular, if $X\subseteq \R^N$ is a compact submanifold with $C^0$-boundary (e.g.~a domain with $C^0$-boundary) with the subspace distance, then $\mathcal{M}_X$ extends meromorphically to $\C\setminus \{0\}$ and can be computed from \eqref{formulaformagoaod}.
\end{cor}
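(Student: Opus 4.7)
The plan is to derive this corollary by feeding Theorem~\ref{symbcorboundary} into the abstract setup of Corollary~\ref{corofofrdl}. First, Theorem~\ref{symbcorboundary} supplies, under hypothesis (MR), an $R_0>0$ such that for $R$ in the sector $\{\arg(R)<\pi/(n+1),\ \mathrm{Re}(R)>R_0\}$, the operator
$$\mathcal{Z}_X(R)\colon \dot{H}^{-\mu}(X)\longrightarrow \overline{H}^{\mu}(X)$$
is an isomorphism, and moreover $\mathcal{W}_R(X)=\dot{H}^{-\mu}(X)$ and $\mathcal{H}_R(X)=\overline{H}^{\mu}(X)$ as topological vector spaces, independently of $R$. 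This identifies the spaces $\mathcal{W}(X)=\dot{H}^{-\mu}(X)$ and $\mathcal{H}(X)=\overline{H}^{\mu}(X)$ needed in hypothesis (1) of Corollary~\ref{corofofrdl}. For the ambient manifold $M$, I would simply take $M=X$ (or any enlargement where (MR) persists), so that the same identifications hold for $\mathcal{W}(M)$ and $\mathcal{H}(M)$. Hypothesis (2) is immediate: since $X$ is compact, the constant function $1$ lies in $C^\infty(X)\subseteq \overline{H}^{\mu}(X)=\mathcal{H}(X)$.

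The step requiring care is hypothesis (3): the $L^2$-pairing \eqref{dualityforzpairing} on $\mathcal{H}_R(X)\times \mathcal{W}_R(X)$ must be shown to coincide with the canonical Sobolev duality pairing between $\overline{H}^{\mu}(X)$ and $\dot{H}^{-\mu}(X)$, and in particular must be $R$-independent. My strategy is to test on a dense subspace: for $\mu\in \mathrm{FM}(X)$ and $f=\mathcal{Z}(R)\nu\in\mathcal{H}_R(X)$, the identity \eqref{dualityforz} together with the fact that $\mathcal{Z}(R)\nu\in C^{1/2}(X,\rd)\subseteq \overline{H}^{\mu}(X)$ yields $\langle f,\mu\rangle_{L^2,R}=\int_X f\,\rd\bar\mu$, which is nothing but the standard integration pairing; by continuity and density this extends uniquely and independently of $R$ to the Sobolev duality. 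With all three hypotheses verified, Corollary~\ref{corofofrdl} directly delivers formula \eqref{formulaformagoaod}.

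For the meromorphic continuation, the final clause of Theorem~\ref{symbcorboundary} gives, under (SMR), a holomorphic family $R\mapsto \mathcal{Z}_X(R)\in\mathbb{B}(\dot{H}^{-\mu}(X),\overline{H}^{\mu}(X))$ on $\Gamma$ whose inverse is holomorphic on $\{R\in\Gamma:\arg(R)<\pi/(n+1),\ \mathrm{Re}(R)>R_0\}$ and meromorphic on $\Gamma$. Composing with the fixed bounded linear functionals ``pair against $1$'' on both sides shows that $R\mapsto R^{-1}\langle 1,\mathcal{Z}_X(R)^{-1}1\rangle_{L^2}$ is meromorphic on $\Gamma$ and holomorphic on the stated sector. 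Since this function agrees with $\mathcal{M}_X(R)$ on the ray $\mathbb{R}_{>R_0}$ by the first part of the corollary, the identity theorem identifies it as the meromorphic extension of $\mathcal{M}_X$ (this is precisely the meromorphic Fredholm argument already invoked in the proof of Corollary~\ref{corofofrdl}).

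The final ``in particular'' statement reduces to citing \cite[Proposition 3.4]{gimpgofflouc} (mentioned in the Remark following Theorem~\ref{symbcorboundary}): the subspace distance pulled back from an embedding $X\hookrightarrow\mathbb{R}^N$ has property (SMR) on the full sector $\Gamma=\mathbb{C}\setminus\{0\}$, so the meromorphic extension conclusion applies to all of $\mathbb{C}\setminus\{0\}$. The main obstacle throughout is the pairing identification in the previous paragraph; once that bookkeeping is settled the rest is an assembly of previously stated results, and no new analytic work is required.
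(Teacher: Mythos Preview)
Your proposal is correct and follows exactly the route the paper intends: Corollary~\ref{corofofrdlforx} is set up precisely so that Theorem~\ref{symbcorboundary} verifies hypotheses (1)--(3) of the abstract Corollary~\ref{corofofrdl}, with the meromorphic continuation and the Euclidean ``in particular'' following from the (SMR) clause and the Remark preceding the corollary. The only point you treat more carefully than the paper is the $R$-independence of the $L^2$-pairing (hypothesis (3)), which the paper leaves implicit; your density argument on $\mathrm{FM}(X)$ is the natural way to fill this in.
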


\section{The operator $\mathcal{Z}$ on a closed manifold}\label{Mclosed}

To better understand the operator $\mathcal{Z}$ we first consider the case of a compact manifold $M$. We here give an informal review of the technical considerations in the paper  by \cite{gimpgofflouc}. As discussed in the previous section, there are complications arising from the fact that distance function might be non-smooth away from the diagonal despite being quite regular at the diagonal.

\subsection{Localizing to the diagonal}
We decompose the operator $\mathcal{Z}$ in a part near the diagonal and an off-diagonal remainder:
$$\mathcal{Z}=Q+L,$$
where 
\begin{equation}
\label{arforgo}
Q(R)f:=\frac{1}{R}\int_M\chi(x,y)\e^{-R\rd(x,y)}f(y)\mathrm{d}y,
\end{equation}
and $\chi\in C^\infty(M\times M)$ satisfies $\chi=1$ near the diagonal $\mathrm{Diag}_M:=\{(x,x):x\in M\}\subseteq M\times M$. We first study the operator $Q$ and return below to study the remainder term $L$. In order to control $Q$, we impose the following condition on the distance function $\rd$.

\begin{deef}
\label{regularaadnai0dn}
Let $\rd$ be a distance function on a manifold $M$. We say that $\rd$ is \emph{regular at the diagonal} if its square $G:=\rd^2:M\times M\to [0,\infty)$ satisfies that there is a neighborhood $U$ of the diagonal $\mathrm{Diag}_M\subseteq M\times M$ such that:
\begin{itemize}
\item $G$ restricts to a smooth function on $U$;
\item $\rd G$ vanishes on $\mathrm{Diag}_M$; and,
\item for each $x\in \mathrm{Diag}_M$, the transversal Hessian $H_{G}$ of $G$ in $x$ is positively definite.
\end{itemize}
If $\rd$ is regular at the diagonal, we write $g_\rd$ for the Riemannian metric on $T^*M$ dual to the  transversal Hessian $H_{G}$.
\end{deef}

\begin{remark}
Examples of distance functions regular at the diagonal include subspace distances on submanifolds in Euclidean space (see \cite[Example 2.16]{gimpgofflouc}) or geodesic distance functions on Riemannian manifolds (see \cite[Example 2.17]{gimpgofflouc}). Then the Riemannian metric coincides with $g_\rd$.
\end{remark}

Consider a distance function $\rd$ regular at the diagonal. It follows from Taylor's theorem that for any $N\in \N$ in local coordinates on a neighborhood $U_0$ we can write 
\begin{equation}
\label{taylor}
\rd(x,y)^2=H_{\rd^2}(x,x-y)+\sum_{j=3}^NC^{(j)}(x,x-y)+r_N(x,x-y),
\end{equation}
where $r_N$ is a smooth function with $r_N(x,v)=O(|v|^{N+1})$ as $v\to 0$, $H_{\rd^2}$ is the transversal Hessian of $\rd^2$, and $C^{(j)}:U_0\to \mathrm{Sym}^j(T^*M|_{U_0})$ are the Taylor coefficients forming a locally defined symmetric $j$-form on $TM|_{U_0}$. 

In the symbol computation of $Q$, the Taylor coefficients are used as differential operators in the cotangent variable. For a $k\in \N_+$ and a multiindex $\gamma\in \N^k_{\geq 3}$, we can define a differential operator $C^{(\gamma)}(x,D_\xi)$ on $T^*M|_{U_0}$ defined by 
$$C^{(\gamma)}(x,-D_\xi):=\prod_{l=1}^kC^{(\gamma_l)}(x,-D_\xi).$$
Here $D_\xi=-i\frac{\partial}{\partial \xi}$. The order of $C^{(\gamma)}(x,-D_\xi)$ is $|\gamma|:=\sum_{l=1}^k\gamma_l$. For $j\in \N$, define the finite set
$$I_j:=\{\gamma\in \cup_{k=1}^\infty \N^k_{\geq 3}: |\gamma|=j+2k\}.$$
For $\gamma\in \N^k$, we set $\mathrm{rk}(\gamma):=k$. In other words, $\gamma\in \cup_k \N^k_{\geq 3}$ belongs to $I_j$ if and only if $j=|\gamma|-2\mathrm{rk}(\gamma)$. We remark that $|\gamma|\geq 3$ and $\mathrm{rk}(\gamma)>0$ is implicit for $\gamma \in I_j$ since $I_j\subseteq \cup_{k=1}^\infty \N^k_{\geq 3}$.

\begin{thm}
\label{conomkmlog}
Let $M$ be an $n$-dimensional manifold equipped with a distance function $\rd$ regular at the diagonal. Consider the operator $Q$ from Equation \eqref{arforgo}. Then $Q\in \Psi^{-n-1}_{\rm cl}(M;\C_+)$ is elliptic with parameter $R\in \C_+:=\{R: \mathrm{Re}(R)>0\}$. The principal symbol of $Q$ is given by 
$$\sigma_{-n-1}(Q)(x,\xi,R)=n!\omega_n(R^2+g_\rd(\xi,\xi))^{-(n+1)/2},$$
where $\omega_n$ denotes the volume of the $n$-dimensional unit ball.

In local coordinates, the full symbol of $Q$ admits a classical asymptotic expansion $ \sum_{j=0}^\infty q_j$ where the $-n-1-j$-homogeneous functions $q_j\in C^\infty((T^*M\oplus \C_+)\setminus (M\times \{0\}))$ are given by $q_0= \sigma_{-n-1}(Q)(x,\xi,R)$ and for $j>0$, $q_j$ is in coordinates given by 
\small
\begin{align*}
q_j(x,\xi,R)=&
\begin{cases} 
&\sum_{\gamma\in I_j, \mathrm{rk}(\gamma)<(n+1)/2} \mathfrak{c}_{\mathrm{rk}(\gamma),n}C^{(\gamma)}(x,-D_\xi)(R^2+g_\rd(\xi,\xi))^{-(n+1)/2+\mathrm{rk}(\gamma)}+ \qquad \mbox{for $n$ odd}\\
& -\sum_{\gamma\in I_j, \mathrm{rk}(\gamma)\geq (n+1)/2} \mathfrak{c}_{\mathrm{rk}(\gamma),n}C^{(\gamma)}(x,-D_\xi)\left[(R^2+g_\rd(\xi,\xi))^{-(n+1)/2+\mathrm{rk}(\gamma)}\log(R^2+g_G(\xi,\xi))\right],\\
{}\\
&\sum_{\gamma\in I_j} \mathfrak{c}_{\mathrm{rk}(\gamma),n}C^{(\gamma)}(x,-D_\xi)(R^2+g_\rd(\xi,\xi))^{-(n+1)/2+\mathrm{rk}(\gamma)}, \ \mbox{for $n$ even.}
\end{cases}
\end{align*}
\normalsize
The coefficients $\mathfrak{c}_{k,n}$ are given by
$$\mathfrak{c}_{k,n}:= 
\begin{cases}
(-1)^k(n-2k)!\omega_{n-2k}\omega_{2k}, \; &\mbox{for $2k<n$}\\
\frac{(-1)^{1-n/2} \omega_{2k}}{(2k-n)!\omega_{2k-n}}, \; & \mbox{for $2k-n\in 2\N$}\\
\frac{(-1)^{\frac{n+1}{2}}}{(2\pi)^{2k-n}}\omega_{2k}\omega_{2k-n-1} , \; &\mbox{for $2k- n\in 2\N+1$}
\end{cases}$$
\end{thm}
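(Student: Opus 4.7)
My plan is to compute the full parameter-dependent symbol of $Q$ by a direct oscillatory integral analysis. Since $\chi\equiv 1$ near the diagonal and $\rd^2$ is smooth there, the off-diagonal kernel $(1-\chi)e^{-R\rd}/R$ decays exponentially in $\mathrm{Re}(R)$ and is uniformly smooth, so it contributes only a parameter-dependent smoothing remainder; I therefore work in local coordinates on a small neighborhood of a diagonal point. The symbol is
\begin{equation*}
q(x,\xi,R)=\frac{1}{R}\int_{\R^n}\psi(x,z)\,e^{-R\rd(x,x-z)}\,e^{i\xi\cdot z}\,\rd z
\end{equation*}
for a suitable cut-off $\psi$. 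The central maneuver is the rescaling $z=w/R$ together with the homogeneity of the Taylor coefficients $C^{(j)}$ in \eqref{taylor}:
\begin{equation*}
R^2\rd(x,x-w/R)^2=H_{\rd^2}(x,w)+\sum_{j\geq 3}R^{2-j}C^{(j)}(x,w)+R^2 r_N(x,w/R),
\end{equation*}
so the rescaled integrand is an $R^{-1}$-perturbation of $e^{-\sqrt{H_{\rd^2}(x,w)}}$.

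\textbf{Term-by-term Fourier transforms.} I now expand $\exp(-\sqrt{H_{\rd^2}+\tilde P_R})$, with $\tilde P_R:=\sum_{j\geq 3}R^{2-j}C^{(j)}$, as a formal Taylor series in $\tilde P_R$ around $H_{\rd^2}$. A term arising from $k$ such factors with indices $\gamma=(\gamma_1,\dots,\gamma_k)\in\N_{\geq 3}^k$ carries the scalar weight $R^{-(|\gamma|-2k)}$ and a $w$-polynomial prefactor $\prod_l C^{(\gamma_l)}(w)$ multiplied by a rational expression in $\sqrt{H_{\rd^2}(x,w)}$, all against $e^{-\sqrt{H_{\rd^2}(x,w)}}$. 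The leading $k=0$ contribution evaluates to
\begin{equation*}
R^{-n-1}\int_{\R^n} e^{-\sqrt{H_{\rd^2}(x,w)}}\,e^{i(\xi/R)\cdot w}\,\rd w\;=\;\frac{n!\omega_n}{(R^2+g_\rd(\xi,\xi))^{(n+1)/2}},
\end{equation*}
by the classical Fourier transform of the anisotropic exponential (equivalently via subordination, using $2^n\pi^{(n-1)/2}\Gamma(\tfrac{n+1}{2})=n!\omega_n$). This yields the principal symbol, and its non-vanishing on $\mathrm{Re}(R)>0$, $(\xi,R)\neq 0$ immediately gives parameter-ellipticity of $Q$ on $\C_+$. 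Each subleading term is handled by two elementary substitutions: multiplication by $C^{(\gamma_l)}(w)$ on the spatial side becomes the differential operator $C^{(\gamma_l)}(-D_\xi)$ applied to the Fourier transform, and each extra $\sqrt{H_{\rd^2}(x,w)}$-factor in the expansion is converted into $-\partial_R$ acting on $e^{-R\sqrt{H_{\rd^2}}}$. Reassembling contributions with $|\gamma|-2k=j$ and undoing the rescaling produces, after arithmetic, the stated $q_j$ indexed by $I_j$.

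\textbf{Polyhomogeneity, logarithms, and main obstacle.} Joint $(\xi,R)$-homogeneity of degree $-n-1-j$ for $q_j$ is immediate from the rescaling, and the remainder $R^2 r_N(x,w/R)$ produces contributions of order $\to-\infty$ as $N\to\infty$, establishing the asymptotic nature of the expansion. The three-way case split defining $\mathfrak{c}_{k,n}$ mirrors the classical analytic-continuation dichotomy for Fourier transforms of homogeneous functions. When $2k<n$, the factor $(R^2+g_\rd(\xi,\xi))^{-(n+1)/2+k}$ is a genuine negative-order classical symbol and the constant $(-1)^k(n-2k)!\omega_{n-2k}\omega_{2k}$ follows from routine Fourier transform identities on $\R^n$. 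When $2k-n\in 2\N$ (only possible for $n$ even) the exponent is a non-negative half-integer and analytic continuation in the exponent supplies the modified constant $(-1)^{1-n/2}\omega_{2k}/[(2k-n)!\omega_{2k-n}]$. When $2k-n\in 2\N+1$, which forces $n$ odd, the naive homogeneous candidate becomes polynomial in $(R,\xi)$ and the Fourier transform at this critical value produces a $\log(R^2+g_\rd(\xi,\xi))$ correction with constant $\omega_{2k}\omega_{2k-n-1}/(2\pi)^{2k-n}$ and an extra minus sign. The main technical obstacle is the combinatorial bookkeeping in matching the Fa\`a di Bruno-type expansion of $\exp(-\sqrt{H_{\rd^2}+\tilde P_R})$ with the closed-form constants $\mathfrak{c}_{k,n}$, and identifying which branch of the Fourier transform formula is active for each $\gamma\in I_j$; after these identifications the polyhomogeneous symbol structure of $Q\in\Psi^{-n-1}_{\rm cl}(M;\C_+)$ is established.
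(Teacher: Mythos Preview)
Your approach is genuinely different from the paper's. The paper first extends $Q$ to be even in $R$ and takes the Fourier transform in the $R$-variable, which converts the Schwartz kernel $R^{-1}e^{-R\rd}$ into $-\log(\eta^2+\rd^2(x,y))$. This is the key simplification: instead of Taylor-expanding $\exp(-\sqrt{H_{\rd^2}+\tilde P_R})$ as you do, the paper Taylor-expands the logarithm
\[
-\log(\eta^2+H_{\rd^2}(x,v))-\log\Bigl(1+\tfrac{\sum_{j\geq 3}C^{(j)}(x,v)}{\eta^2+H_{\rd^2}(x,v)}\Bigr),
\]
whose power-series expansion immediately produces terms $C^{(\gamma)}(x,v)(\eta^2+H_{\rd^2})^{-\mathrm{rk}(\gamma)}$ indexed cleanly by $\gamma\in I_j$, with no Fa\`a di Bruno combinatorics. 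The constants $\mathfrak c_{k,n}$ then drop out of a single $(n{+}1)$-dimensional Fourier transform $F_l(x,\xi,R)=\mathcal F_{(v,\eta)\to(\xi,R)}\bigl[(\eta^2+H_{\rd^2})^{-l}\bigr]$, whose three-way dichotomy (ordinary/polynomial/log) is exactly the case split in the statement. What the paper's auxiliary-variable trick buys is that the square root never appears and the combinatorics collapses to the $\log(1+x)$ series; what your direct route buys is that one stays in $n$ variables throughout, at the price of a much heavier expansion.

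There is, however, a concrete gap in your execution. You write that ``each extra $\sqrt{H_{\rd^2}(x,w)}$-factor in the expansion is converted into $-\partial_R$ acting on $e^{-R\sqrt{H_{\rd^2}}}$''. But the Taylor expansion of $e^{-\sqrt{H+\tilde P_R}}$ in $\tilde P_R$ produces \emph{negative} powers of $\sqrt{H_{\rd^2}}$: already the first-order term is $-\tfrac{\tilde P_R}{2\sqrt{H_{\rd^2}}}\,e^{-\sqrt{H_{\rd^2}}}$, and at order $k$ one gets powers $H_{\rd^2}^{k/2-M}$ with $M\geq k$. The operator $-\partial_R$ raises the power of $\sqrt{H_{\rd^2}}$ by one, so it cannot generate these. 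You would instead need an $R$-integration or a Mellin representation $H^{-a/2}=\Gamma(a)^{-1}\int_0^\infty s^{a-1}e^{-s\sqrt{H}}\,\rd s$ to reduce to the basic Fourier transform, and then carry the resulting $s$-integral through the computation. This is doable, but it is precisely the step where the constants $\mathfrak c_{k,n}$ are fixed, and as written your argument does not supply it. The paper's $\eta$-variable trick avoids this issue entirely because $(\eta^2+H_{\rd^2})^{-l}$ is nonsingular for $(v,\eta)\neq 0$ and its $(n{+}1)$-dimensional Fourier transform is a standard tabulated formula.
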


\begin{proof}[Sketch of proof]
For the full proof, see \cite[Theorem 2.9]{gimpgofflouc}. We follow the notation of \cite{gimpgofflouc}. The ideas in the proof rely on elementary techniques of calculus, and we recall the salient features. We extend $Q$ to a function of $R\in \C\setminus i\R$ by declaring $Q$ to be even in $R$. By taking a Fourier transform in the $R$-direction, the Schwarz kernel of $Q$ is transformed to the distribution 
$$K(x,y,\eta)=\chi(x,y)K_0(x,y,\eta), \quad \mbox{where} \quad K_0(x,y,\eta):=-\log(\eta^2+g_0(x,y)).$$
The statement of the theorem is local in nature, so it suffices to compute with $K_0$ in local coordinates. Standard considerations show that $K_0$ is a conormal distribution $K_0\in CI^{-n-1}(Z;\mathrm{Diag}_M\times \{0\})$ where $Z=U\times \R$ is a neighborhood of $\mathrm{Diag}_M\times \R\subseteq M\times M\times \R$. The arguments in \cite[Theorem 2.9]{gimpgofflouc} ensures that $Q$ indeed is a pseudodifferential operator with parameter of order $-n-1$ with the prescribed principal symbol. 

The computationally delicate issue is that of finding the full symbol in local coordinates. We do so by expanding $K_0$ near $x=y$ and inverse transform in $(x-y,\eta)$ to the symbol depending on $(\xi,R)$. Using the Taylor expansion \eqref{taylor}, with $v=x-y$, we write 
$$K_0(x,y,\eta)=-\log(\eta^2+H_{\rd}(v,v))-\log\left(1+\frac{\sum_{j=3}^NC^{(j)}(x,v)+r_N(x,v)}{\eta^2+H_{\rd}(x,v)}\right).$$
For small $v=x-y$, we can Taylor expand 
$$K_0(x,y,\eta)=-\log(\eta^2+H_{\rd}(x,v))+\sum_{j=1}^N\sum_{\gamma\in I_{j}} \frac{(-1)^{\mathrm{rk}(\gamma)+1}}{\mathrm{rk}(\gamma)}\frac{C^{(\gamma)}(x,v)}{(\eta^2+H_{\rd}(x,v))^{\mathrm{rk}(\gamma)}}+\tilde{r}_N(x,v,\eta).$$
Each term in the second sum $\sum_{\gamma\in I_{j}} \frac{(-1)^{\mathrm{rk}(\gamma)+1}}{\mathrm{rk}(\gamma)}\frac{C^{(\gamma)}(x,v)}{(\eta^2+H_{\rd}(x,v))^{\mathrm{rk}(\gamma)}}$ is homogeneous of degree $j$. The error term $\tilde{r}_n$ is controlled in \cite{gimpgofflouc}. To compute the full symbol we now compute the inverse Fourier transform in $(v,\eta)$. For $l>0$, denote the Fourier transform of $(\eta^2+H_{\rd}(v,v))^{-l}$ in the $(v,\eta)$-direction by $F_l(x,\xi,R)$. Using homogeneity and rotational invariance, we have that
\begin{align*}
F_l(x,\xi,R):=&\int_{T_xM\oplus \R} \frac{\e^{-i\xi.v-iR\eta}}{(\eta^2+H_{\rd}(v,v))^{l}} \mathrm{d}v\mathrm{d}\eta=\\
=&\begin{cases}
\kappa_{n,l} (R^2+g_\rd(\xi,\xi))^{-\frac{n+1}{2}+l}, \quad &2l-n-1\notin 2\N,\\
(R^2+g_\rd(\xi,\xi))^{-\frac{n+1}{2}+l}(\kappa_{n,l}\log(R^2+g_\rd(\xi,\xi))+\beta_{n,l}), \quad &2l-n-1\in 2\N,
\end{cases}
\end{align*}
for suitable constants $\kappa_{n,l},\beta_{n,l}\in \R$ that are explicitly computed in \cite{gimpgofflouc}. It follows that for $\gamma\in I_j$, the Fourier transform of the term
$$\frac{C^{(\gamma)}(x,v)}{(\eta^2+H_{\rd}(v,v))^{l}},$$
in the $(v,\eta)$-direction is given by $C^{(\gamma)}(x,-D_\xi)F_l(x,\xi,R)$. 
A long computation putting all of these terms together gives the full symbol computation.
\end{proof}

Theorem \ref{conomkmlog} has some rather direct consequences on distance functions that are smooth off the diagonal. We note the following consequence.

\begin{cor}
\label{penfpefnaon}
Let $M$ be an $n$-dimensional compact manifold with a distance function $\rd$ regular at the diagonal and smooth off-diagonally, e.g.~the subspace distance on a submanifold of Euclidean space. Set $\mu:=(n+1)/2$. Then $\mathcal{Z}$ is an elliptic pseudodifferential operator with parameter $R$ and order $-n-1$, and $\mathcal{Z}-Q$ is smoothing with parameter $R>0$. In particular, for a suitable $R_0>0$ and $R>R_0$ the operator 
$$\mathcal{Z}(R):H^{-\mu}(M)\to H^\mu(M),$$
is invertible. Moreover,
$$\|\mathcal{Z}(R)^{-1}-Q(R)^{-1}\|_{H^\mu(M)\to H^{-\mu}(M)}=O(R^{-\infty}),$$
and 
\begin{equation}
\label{mmmmmmmm}
\mathcal{M}_M(R)=R^{-1}\langle Q(R)^{-1}1,1\rangle+O(R^{-\infty}).
\end{equation}
\end{cor}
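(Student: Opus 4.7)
The plan is to deduce everything from the fact that $L(R):=\mathcal{Z}(R)-Q(R)$ is exponentially small in $R$, combined with the parameter-elliptic calculus already used for $Q$ in Theorem~\ref{conomkmlog}. First I would examine the Schwartz kernel of $L(R)$, namely $R^{-1}(1-\chi(x,y))e^{-R\rd(x,y)}$. The factor $1-\chi$ vanishes near the diagonal, where $\rd$ can be singular, and its support lies in the set where $\rd$ is smooth by hypothesis; hence this kernel is jointly smooth on $M\times M$. Compactness of $M$ then gives $\rd(x,y)\geq c_0>0$ on $\mathrm{supp}(1-\chi)$, so the kernel and each of its derivatives is bounded by $C_Ne^{-c_0R}$. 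Consequently $L(R)$ is a smoothing operator with $\|L(R)\|_{H^{-s}(M)\to H^{s'}(M)}=O(e^{-c_0R/2})$ for every $s,s'\in\R$; in particular it is $O(R^{-\infty})$ in every operator norm and is negligible as a parameter-dependent symbol. Therefore $\mathcal{Z}=Q+L\in\Psi^{-n-1}_{\mathrm{cl}}(M;\C_+)$ is parameter-elliptic and has the same principal symbol as $Q$.

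Next I would invert $Q(R)$ by invoking the standard parameter-elliptic calculus. Since the principal symbol $n!\omega_n(R^2+g_\rd(\xi,\xi))^{-(n+1)/2}$ is nonvanishing, a parameter G\aa rding inequality (the same one used in the sketch of Theorem~\ref{symbcorboundary}) together with the meromorphic Fredholm theorem produces an $R_0>0$ for which $Q(R):H^{-\mu}(M)\to H^\mu(M)$ is invertible whenever $R>R_0$, with polynomial control $\|Q(R)^{-1}\|_{H^\mu\to H^{-\mu}}\leq CR^{N_0}$ for some fixed $N_0$. (One gets $O(1)$ control in the parameter-dependent Sobolev norms $H^{\pm\mu}_R$, and converting to ordinary Sobolev norms only costs polynomial factors of $R$.)

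Combining the two steps via $\mathcal{Z}(R)=Q(R)\bigl(I+Q(R)^{-1}L(R)\bigr)$, the operator $Q(R)^{-1}L(R)$ on $H^{-\mu}(M)$ has norm at most $CR^{N_0}e^{-c_0R/2}=O(R^{-\infty})$. For $R$ large enough the Neumann series converges, $\mathcal{Z}(R)$ is invertible, and $(I+Q(R)^{-1}L(R))^{-1}=I+O(R^{-\infty})$; pre-composing with $Q(R)^{-1}$ and using the resolvent identity
\[
\mathcal{Z}(R)^{-1}-Q(R)^{-1}=-\,\mathcal{Z}(R)^{-1}\,L(R)\,Q(R)^{-1}
\]
together with the exponential bound on $L(R)$ and the polynomial bounds on $Q(R)^{-1}$ and $\mathcal{Z}(R)^{-1}$ yields $\|\mathcal{Z}(R)^{-1}-Q(R)^{-1}\|_{H^\mu\to H^{-\mu}}=O(R^{-\infty})$. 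Finally, applying Corollary~\ref{corofofrdlforx} with $X=M$ (whose boundary is empty, hence trivially $C^0$) gives $\mathcal{M}_M(R)=R^{-1}\langle\mathcal{Z}(R)^{-1}1,1\rangle_{L^2}$, and pairing the preceding operator bound against the constant function $1\in H^\mu(M)$ produces \eqref{mmmmmmmm}.

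The only nontrivial ingredient is the polynomial operator-norm bound on $Q(R)^{-1}$; this is not proved from scratch in the excerpt but is a standard feature of pseudodifferential operators with parameter, already invoked in the proof sketch of Theorem~\ref{symbcorboundary}. Everything else reduces to smooth-kernel estimates for $L(R)$ and a Neumann-series argument, both of which are entirely routine once the exponential smallness of $L(R)$ has been recorded.
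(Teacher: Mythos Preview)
Your proof is correct and follows essentially the same approach as the paper's sketch: both show that $L(R)$ has a smooth, exponentially decaying kernel, deduce that $\mathcal{Z}=Q+L$ is parameter-elliptic of order $-n-1$, invoke parameter-ellipticity for invertibility, and then argue that $\mathcal{Z}^{-1}-Q^{-1}$ inherits the smallness of $L$. You have simply unpacked the step ``since $\mathcal{Z}-Q$ is smoothing with parameter, so is $\mathcal{Z}^{-1}-Q^{-1}$'' via an explicit Neumann series and resolvent identity, which is exactly what underlies the paper's one-line claim.
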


\begin{proof}[Sketch of proof]
If $\rd$ is smooth off-diagonally, then 
$$L(R)f(x):=\frac{1}{R}\int_M(1-\chi(x,y))\e^{-R\rd(x,y)}f(y)\mathrm{d}y,$$
is smoothing and exponentially decaying as $R\to +\infty$. We conclude that $Z=Q+L$ is an elliptic pseudodifferential operator with parameter $R$ and order $-n-1$ from Theorem \ref{conomkmlog}. Invertibility of $\mathcal{Z}$ for large enough $R$ follows from the ellipticity with parameter of $Q$. And indeed, since $\mathcal{Z}-Q$ is smoothing with parameter, so is $\mathcal{Z}^{-1}-Q^{-1}$ and the norm estimate follows. Therefore, 
$$\langle \mathcal{Z}(R)^{-1}1,1\rangle=\langle Q(R)^{-1}1,1\rangle+O(R^{-\infty}).$$
The equality \eqref{mmmmmmmm} follows as in Corollary \ref{corofofrdlforx}.
\end{proof}

\begin{remark}\label{MRremark}
The assumption in Corollary \ref{penfpefnaon} that the distance function is smooth off-diagonally can be weakened to the distance function having property $(MR)$.
\end{remark}

\subsection{Asymptotic expansions for compact manifolds}
\label{asymtppomomad}

For a compact manifold, Corollary \ref{penfpefnaon} provides means of computing the asymptotics of the magnitude function. Starting from \cite[Subsection 2.5]{gimpgofflouc} we compute the asymptotics of $\langle Q(R)^{-1}1,1\rangle$ as $R\to \infty$ using semiclassical analysis of the pseudodifferential operator with parameter $Q(R)^{-1}$. We mention two instrumental results in this direction. 

\begin{lem}
\label{lasdnaodjna}
Consider a properly supported pseudodifferential operator with parameter $A\in \Psi^m_{\rm cl}(M;\Gamma)$ on a manifold $M$. Setting 
$$ a_{j,0}(x,R):=a_j(x,0,R),$$
in each coordinate chart where $\sum_j a_j$ is a homogeneous expansion of the full symbol of $A$ in that chart, produces a sequence $(a_{j,0})_{j\in \N}\subseteq C^\infty(M\times \Gamma)$ of functions such that 
\begin{enumerate}
\item Each $a_{j,0}=a_{j,0}(x,R)$ is homogeneous of degree $m-j$ in $R$.
\item For any $N\in \N$, we have that
\begin{equation}
\label{expansisonandao}
[A(R)1](x)=\sum_{j=0}^N a_{j,0}(x,R)+r_{N}(x,R)=\sum_{j=0}^N a_j(x,1)R^{m-j}+r_{N}(x,R),
\end{equation}
where $r_N\in C^\infty(M\times \Gamma)$ is a function such that for any compact $K\subseteq M$ it holds that
$$\sup_{x\in K}|\partial_x^\alpha\partial_R^kr_N(x,R)|=O(\mathrm{Re}(R)^{m-N+|\alpha|+k}), \quad\mbox{as $\mathrm{Re}(R)\to +\infty$}.$$
\end{enumerate}
\end{lem}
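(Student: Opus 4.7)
The plan is to exploit the formal identity $[A(R)1](x)=a(x,0,R)$, suggested by $\widehat{1}=(2\pi)^n\delta_0$, and turn it into a genuine asymptotic expansion by localizing in $M$ and by careful bookkeeping of the low-frequency behavior of the homogeneous symbols $a_j$.

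First I would cover $M$ by coordinate charts with a subordinate partition of unity $\{\phi_i\}$ and write $A(R)1=\sum_i A(R)\phi_i$. For $x$ in a fixed compact $K\subseteq M$, proper support of $A$ implies that only finitely many summands contribute. In a coordinate chart around $K$, choose $\psi\in C_c^\infty$ with $\psi\equiv 1$ on a neighborhood of $K$. Then
\[
[A(R)1](x)=[A(R)\psi](x)+[A(R)(1-\psi)](x),
\]
and the second term is the action of a properly supported parameter-dependent pseudodifferential operator on a smooth function whose support is separated from $x\in K$; hence its Schwartz kernel restricted to $K\times \mathrm{supp}(1-\psi)$ is smoothing with parameter, and all its derivatives are $O(\mathrm{Re}(R)^{-\infty})$, to be absorbed into $r_N$.

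Second, in local coordinates I would write $[A(R)\psi](x)=(2\pi)^{-n}\int e^{ix\cdot\xi}a(x,\xi,R)\widehat{\psi}(\xi)\,d\xi$ and fix an excision function $\chi\in C^\infty$ vanishing near $(\xi,R)=(0,0)$ and equal to $1$ outside a larger neighborhood, so that the full symbol admits the classical expansion $a\sim\sum_j\chi\,a_j\bmod S^{-\infty}$ with each $\chi\,a_j$ a genuine parameter-dependent symbol of order $m-j$. For $\mathrm{Re}(R)$ larger than the excision radius, $\chi(\xi,R)=1$ on a ball around $\xi=0$ whose radius grows with $\mathrm{Re}(R)$; together with the rapid decay of $\widehat{\psi}$, standard stationary-integration-by-parts arguments give
\[
(2\pi)^{-n}\int e^{ix\cdot\xi}\chi(\xi,R)a_j(x,\xi,R)\widehat{\psi}(\xi)\,d\xi = a_j(x,0,R)\psi(x)+\rho_{j}(x,R),
\]
where $\rho_j$ and all its $x$- and $R$-derivatives decay faster than any power of $\mathrm{Re}(R)$. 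Combined with $\psi(x)=1$ on $K$ and the homogeneity $a_j(x,0,R)=a_j(x,0,1)R^{m-j}=a_{j,0}(x,R)$, this gives the $j$-th term in the claimed expansion.

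Third, after summing $j=0,\dots,N$, the residual operator $B_N(R):=A(R)-\sum_{j=0}^N\mathrm{Op}(\chi\,a_j)$ has symbol in $S^{m-N-1}$ with parameter, and standard parameter-dependent pseudodifferential estimates yield the stated pointwise control of $\partial_x^\alpha\partial_R^k[B_N(R)\psi](x)$ uniformly for $x\in K$; combining with the rapidly decreasing contributions from $1-\psi$ and from the $\rho_j$ packages everything into $r_N$ with the required estimate. The main obstacle is the combinatorial bookkeeping, ensuring that each cutoff error (from $\chi$, $\psi$, and the partition of unity) is genuinely smoothing with parameter so that it contributes no growth in $R$, and that $x$- and $R$-differentiation commute cleanly with the oscillatory integral through the standard symbol seminorms; once this is in place, the expansion \eqref{expansisonandao} and its homogeneity reformulation follow immediately.
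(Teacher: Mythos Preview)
Your proposal is correct and follows essentially the same approach as the paper: localize to coordinate charts using proper support, then exploit the Fourier-transform identity $a(x,D,R)1=a(x,0,R)$ (made rigorous via cutoffs and the classical symbol expansion) to read off the asymptotic expansion term by term. The paper's own argument is just a one-line sketch of this idea, referring to \cite[Lemma 2.24]{gimpgofflouc} for details; your outline is a faithful and more explicit version of it.
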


The proof of this lemma can be found in \cite[Lemma 2.24]{gimpgofflouc}. One way to see why it is true, is to note that it is a local statement and can as such be reduced to a statement for compactly supported pseudodifferential operators with parameter in $\R^n$. After this reduction, it is a short computation with the Fourier transform that 
$$a(x,D,R)1=a(x,0,R).$$

\begin{prop}
\label{alksdnaksndasodn}
Let $M$ be an $n$-dimensional compact manifold with a distance function $\rd$ regular at the diagonal and smooth off-diagonally, e.g. the subspace distance on a submanifold of Euclidean space. Set $\mu:=(n+1)/2$. Then for a suitable $R_0>0$ and $R>R_0$ the operator 
$$\mathcal{Z}(R)^{-1}:H^{\mu}(M)\to H^{-\mu}(M),$$
is an elliptic pseudodifferential operator with parameter of order $n+1$. In each coordinate chart, we can compute the full symbol of $\mathcal{Z}(R)^{-1}$ as an asymptotic sum $\sum_k a_k$ where $a_0,a_1,\ldots$ are computed iteratively from 
$$a_0(x,\xi,R)=\frac{1}{n!\omega_n}(R^2+g_\rd(\xi,\xi))^{(n+1)/2},$$
and 
$$a_k=-a_0\sum_{|\alpha|+j+l=k, l<k} \frac{1}{\alpha!} \partial_\xi^\alpha q_kD_x^\alpha a_l.$$
\end{prop}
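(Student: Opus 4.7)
The plan is to build on Corollary \ref{penfpefnaon}, which already identifies $\mathcal{Z}(R)$ as an elliptic parameter-dependent pseudodifferential operator of order $-n-1$, invertible for $R>R_0$, and whose full symbol in each local chart agrees with that of $Q$ modulo $O(R^{-\infty})$. The standard parametrix construction in the parameter-dependent classical calculus then yields an elliptic parameter-dependent parametrix $P(R)\in \Psi^{n+1}_{\rm cl}(M;\C_+)$ for $\mathcal{Z}(R)$, and because $\mathcal{Z}(R)$ is genuinely invertible, $P(R)$ differs from $\mathcal{Z}(R)^{-1}$ only by an operator smoothing with parameter. Hence $\mathcal{Z}(R)^{-1}\in \Psi^{n+1}_{\rm cl}(M;\C_+)$ with the same full symbol as $P(R)$, which takes care of the first assertion.

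The second assertion reduces to a symbol computation in each chart. Using Theorem \ref{conomkmlog}, write $\sigma(\mathcal{Z})\sim\sum_j q_j$, and make the ansatz $\sigma(\mathcal{Z}(R)^{-1})\sim\sum_l a_l$ with $a_l$ of joint homogeneity $n+1-l$ in $(\xi,R)$. The parameter-dependent Leibniz/composition formula gives
$$1\ \sim\ \sigma(\mathcal{Z}\circ \mathcal{Z}(R)^{-1})\ \sim\ \sum_{\alpha,j,l}\frac{1}{\alpha!}(\partial_\xi^\alpha q_j)(D_x^\alpha a_l).$$
Matching the leading ($k=0$) piece yields $q_0 a_0=1$, so $a_0=\frac{1}{n!\omega_n}(R^2+g_\rd(\xi,\xi))^{(n+1)/2}$. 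For each $k\geq 1$, collecting the contributions of joint degree $-k$ and isolating the unknown $a_k$, which appears only in the term $\alpha=0$, $j=0$, $l=k$, produces
$$q_0 a_k+\sum_{\substack{|\alpha|+j+l=k\\ l<k}}\frac{1}{\alpha!}\partial_\xi^\alpha q_j\, D_x^\alpha a_l=0,$$
and dividing through by $q_0=1/a_0$ gives the recursion in the statement (where the subscript $k$ on $q$ should evidently read $j$, the summation index). A short induction on $k$ verifies that each $a_k$ produced by this recursion is indeed of joint homogeneity $n+1-k$ in $(\xi,R)$, consistent with membership in the classical parameter-dependent class.

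The main technical subtlety I anticipate is accommodating the logarithmic factors $\log(R^2+g_\rd(\xi,\xi))$ that appear in $q_j$ when $n$ is odd and $\mathrm{rk}(\gamma)\geq(n+1)/2$; ensuring closure of the calculus under differentiation and composition requires working in a log-polyhomogeneous parameter-dependent symbol class rather than the purely classical one, and one has to track that the logarithmic orders do not proliferate in an uncontrolled way through the recursion. A secondary check is that the recursive definition is compatible with coordinate changes so that $\sum_l a_l$ defines a global pseudodifferential operator on $M$; this is automatic, since $\mathcal{Z}(R)^{-1}$ is invariantly defined and its full symbol is uniquely determined modulo symbols smoothing with parameter, but it should be recorded.
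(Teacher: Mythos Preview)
Your proposal is correct and follows essentially the same route as the paper: the paper simply records that the construction is the standard parametrix construction in the parameter-dependent calculus, citing \cite[Theorem 2.21]{gimpgofflouc} for details, and notes that by abstract nonsense the parametrix reproduces $\mathcal{Z}(R)^{-1}$ up to operators smoothing with parameter. Your observation that the $q_k$ in the displayed recursion should read $q_j$ is also correct, and your remark about needing a log-polyhomogeneous symbol class when $n$ is odd is a genuine technical point that the paper likewise defers to the companion reference.
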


The proof of this proposition and further details on the precise form of the symbols can be found in \cite[Corollary 2.21]{gimpgofflouc}. The construction is standard in pseudodifferential calculus and produces a parametrix that by abstract nonsense reproduces $\mathcal{Z}(R)^{-1}$ up to operators smoothing with parameter. The crucial feature of Proposition \ref{alksdnaksndasodn} is that it produces a way of explicitly computing an inverse to arbitrary level of precision -- where the precision manifests itself through Lemma \ref{lasdnaodjna} as to which order of $R$ we can compute $\langle \mathcal{Z}(R)^{-1}1,1\rangle$.

\begin{thm}
\label{magcompsclosedeld}
Let $M$ be an $n$-dimensional compact manifold with a distance function $\rd$ whose square is regular at the diagonal. Let $(a_{j,0})_{j\in \N}\subseteq C^\infty(M;\C_+)$ denote the sequence of homogeneous functions obtained from restriction to $\xi=0$ of the full symbol of $Q_M^{-1}$, as in \cite[Section 2.5]{gimpgofflouc}. It holds that 
$$\langle 1,Q_M(R)^{-1}1\rangle =\frac{1}{n!\omega_n} \sum_{k=0}^\infty c_k(M,\rd)R^{n+1-k}+O(\mathrm{Re}(R)^{-\infty}), \quad \mbox{as $\mathrm{Re}(R)\to +\infty$},$$
where 
$$c_k(M,\rd)=n!\omega_n\int_M a_{k,0}(x,1)\rd x.$$
Here $\rd x$ is the Riemannian volume density defined from $g_{\rd^2}$. The functions $a_{k,0}(x,1)$ are explicitly constructed inductively from the Taylor expansion \eqref{taylor} using Lemma \ref{lasdnaodjna} and Proposition \ref{alksdnaksndasodn}. In particular, 
\begin{align*}
c_k(M,\rd)=
\begin{cases}
0,\; &\mbox{when $k$ is odd,}\\
\mathrm{vol}(M,g_\rd),\; &\mbox{when $k=0$},\\
\frac{n+1}{6}\int_X s_{\rd^2}\rd x,\; &\mbox{when $k=2$},
\end{cases}
\end{align*}
where $s_{\rd^2}$ in local coordinates is computed as the polynomial in the Taylor coefficients of $\rd^2$ at the diagonal given as
\begin{align*}
s_{\rd^2}(x):=&3C^4(x,g\otimes g) - 3\frac{\mathfrak{c}_{2,n}(n+5)(n^2-9)}{\mathfrak{c}_{1,n}}(C^3\otimes C^3)(x,g\otimes g \otimes g), \quad\mbox{if $n\neq 1,3$}\\
s_{\rd^2}(x):=&3\bigg(10C^{4}_G(x,g_G\otimes g_G)- \frac{\mathfrak{c}_{2,3}}{\mathfrak{c}_{1,3}}(C^{3}_G\otimes C^{3}_G)(x,g_G\otimes g_G\otimes g_G)\bigg), \quad\mbox{if $n=3$}\\
\end{align*}
\end{thm}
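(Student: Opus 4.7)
The plan is to apply Lemma \ref{lasdnaodjna} to the operator $A(R) := Q_M(R)^{-1}$, which by Proposition \ref{alksdnaksndasodn} is a classical parameter-dependent pseudodifferential operator of order $n+1$ with full symbol $\sum_k a_k$ determined recursively from the symbol of $Q_M$ in Theorem \ref{conomkmlog}. Lemma \ref{lasdnaodjna} yields in each chart the pointwise expansion $[Q_M(R)^{-1}1](x) = \sum_{j=0}^N a_{j,0}(x,R) + r_N(x,R)$ with $a_{j,0}(x,R) = R^{n+1-j}a_{j,0}(x,1)$ homogeneous in $R$ and $r_N = O(\mathrm{Re}(R)^{n+1-N})$ uniformly in $x$. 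Because the left-hand side is intrinsic, local expansions patch under a partition of unity, and integration against the constant $1$ gives
\begin{equation*}
\langle 1, Q_M(R)^{-1}1\rangle = \sum_{k=0}^N R^{n+1-k}\int_M a_{k,0}(x,1)\,\rd x + O(\mathrm{Re}(R)^{n-N}),
\end{equation*}
so that $c_k(M,\rd) = n!\omega_n \int_M a_{k,0}(x,1)\,\rd x$. Reciprocating the principal symbol of $Q_M$ from Theorem \ref{conomkmlog} yields $a_0(x,\xi,R) = \frac{1}{n!\omega_n}(R^2+g_\rd(\xi,\xi))^{(n+1)/2}$, whence $a_{0,0}(x,1) = 1/(n!\omega_n)$ and $c_0 = \mathrm{vol}(M,g_\rd)$.

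For the vanishing of odd coefficients I would induct on $k$ showing that $a_k(x,\xi,R)$ has parity $(-1)^k$ under $\xi \mapsto -\xi$. By Theorem \ref{conomkmlog} each $q_j$ is a linear combination of terms $C^{(\gamma)}(x,-D_\xi)$ applied to functions of $(R^2+g_\rd(\xi,\xi))$ (possibly multiplied by $\log(R^2+g_\rd(\xi,\xi))$), which are even in $\xi$; since $C^{(\gamma)}(x,-D_\xi)$ is a homogeneous differential operator of order $|\gamma| = j + 2\mathrm{rk}(\gamma) \equiv j \pmod 2$, the symbol $q_j$ inherits parity $(-1)^j$. The recursion
\begin{equation*}
a_k = -a_0 \sum_{|\alpha|+j+l=k,\,l<k} \tfrac{1}{\alpha!}\partial_\xi^\alpha q_j\, D_x^\alpha a_l
\end{equation*}
then combines factors of parities $0$, $j+|\alpha|$, and $l$, whose product is $(-1)^{|\alpha|+j+l} = (-1)^k$, closing the induction. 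Restriction to $\xi = 0$ kills $a_{k,0}(x,1)$ for odd $k$.

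The remaining content, and the principal obstacle, is the explicit calculation of $c_2 = n!\omega_n\int_M a_{2,0}(x,1)\,\rd x$. First I would compute $a_1$ from its recursion, which involves only $q_1$ (index set $I_1 = \{(3)\}$), so that $a_1$ is linear in $C^{(3)}$ and odd in $\xi$. Then the triples $(|\alpha|,j,l)$ with $|\alpha|+j+l=2$ and $l<2$ group $a_2$ into four families of terms: the contribution $q_2\cdot a_0$ (producing both a $C^{(4)}$ piece and an iterated $C^{(3)}(x,-D_\xi)C^{(3)}(x,-D_\xi)$ piece from $I_2 = \{(4),(3,3)\}$); the cross term $q_1\cdot a_1$ (a second $C^{(3)}\otimes C^{(3)}$ contribution); the first-order couplings $\partial_\xi q_j\, D_x a_l$ for $|\alpha|=1$; and the purely metric term $\tfrac{1}{2}\partial_\xi^2 q_0\, D_x^2 a_0$. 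Working in Riemannian normal coordinates for $g_\rd$ at a chosen base point reduces each $\partial_\xi^{2m}(R^2+|\xi|^2)^s|_{\xi=0}$ to iterated Laplacians, producing Pochhammer factors $s(s-1)\cdots$ which combine with the explicit constants $\mathfrak{c}_{k,n}$ from Theorem \ref{conomkmlog} and the multinomial factors from the recursion. The hard bookkeeping is to verify that these assemble into exactly the displayed prefactor $\tfrac{n+1}{6}$ and the coefficient $-3\mathfrak{c}_{2,n}(n+5)(n^2-9)/\mathfrak{c}_{1,n}$ of $(C^{(3)}\otimes C^{(3)})(x,g^{\otimes 3})$, with the dimension split $n \neq 1,3$ versus $n=3$ reflecting the logarithmic branch in Theorem \ref{conomkmlog} triggered by $\mathrm{rk}(\gamma) \geq (n+1)/2$, first activated at $n=3$ by $\gamma = (3,3)$. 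The tensorial invariance of the traces $C^{(4)}(x,g\otimes g)$ and $(C^{(3)}\otimes C^{(3)})(x,g^{\otimes 3})$ under coordinate changes then ensures that $s_{\rd^2}$ so defined is an intrinsic scalar on $M$, and this combinatorial computation is exactly of the type handled by the symbolic implementation described in the appendix.
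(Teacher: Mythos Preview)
Your proposal is correct and follows the same approach as the paper, which simply states that the theorem ``follows by combining Lemma \ref{lasdnaodjna} and Proposition \ref{alksdnaksndasodn}'' and defers all details on the structure of the $a_{k,0}$ (including the parity argument and the explicit computation of $c_2$) to \cite[Subsection 2.5 and Theorem 6.1]{gimpgofflouc}. In fact you supply more than the paper does here: your parity argument for the vanishing of odd $c_k$ and your outline of the $c_2$ computation are exactly the content that the paper outsources to the companion reference.
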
 

Theorem \ref{magcompsclosedeld} follows by combining Lemma \ref{lasdnaodjna} and Proposition \ref{alksdnaksndasodn}. For more details on the structure of the local densities $a_{k,0}$, see \cite[Subsection 2.5 and Theorem 6.1]{gimpgofflouc}. The notation $s_{\rd^2}$ in Theorem \ref{magcompsclosedeld} is justified by \cite[Example 2.30]{gimpgofflouc}, which shows that in the case of the geodesic distance function on a Riemannian manifold $s_{\rd^2}$ is the scalar curvature. The following theorem is a direct consequence of Corollary \ref{corofofrdl}, Corollary \ref{penfpefnaon} and Theorem \ref{magcompsclosedeld}.

\begin{thm}
\label{magcompsclosedeldforz}
Let $M$ be an $n$-dimensional compact manifold with a distance function $\rd$ with property (MR) on $\Gamma$, e.g. if $\rd$ is regular at the diagonal and smooth off-diagonally. It holds that 
$$\mathcal{M}_M(R)=\frac{1}{n!\omega_n} \sum_{k=0}^\infty c_k(M,\rd)R^{n-k}+O(\mathrm{Re}(R)^{-\infty}), \quad \mbox{as $\mathrm{Re}(R)\to +\infty$ in $\Gamma$},$$
where $c_k(M,\rd)$ is as in Theorem \ref{magcompsclosedeld}. The structure of the coefficients $c_j$ is as follows:
\begin{itemize}
\item $c_0(M,\rd)=\mathrm{vol}_n(M,g_\rd)$.
\item For $j\geq 1$, the coefficient $c_{j}(M,\rd)$ is an integral over $M$ of a polynomial (universally constructed from the Taylor expansion \eqref{taylor} of $g_0:=\rd^2$ at the diagonal) in the covariant derivatives of the curvature of $M$ where the total degree is $\leq j$.
\end{itemize}
\end{thm}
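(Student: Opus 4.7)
The plan is to derive this theorem directly by chaining together the three results cited in the excerpt, with a little bookkeeping to extract the structural claim about the $c_j$. First, I would invoke Corollary~\ref{corofofrdl} (in the form of Corollary~\ref{corofofrdlforx}): since $\rd$ has property (MR) on $\Gamma$ and $\Gamma\cap [0,\infty)\neq \emptyset$, Theorem~\ref{symbcorboundary} furnishes the identifications $\mathcal{W}_R(M)=\dot{H}^{-\mu}(M)=H^{-\mu}(M)$ and $\mathcal{H}_R(M)=\overline{H}^{\mu}(M)=H^\mu(M)$ (no boundary), and produces an $R_0>0$ such that $\mathcal{Z}_M(R):H^{-\mu}(M)\to H^\mu(M)$ is invertible for $\mathrm{Re}(R)>R_0$ in the relevant sector. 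Meckes's formula then gives
\begin{equation*}
\mathcal{M}_M(R)=R^{-1}\langle 1,\mathcal{Z}_M(R)^{-1}1\rangle_{L^2}.
\end{equation*}

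Second, I would replace $\mathcal{Z}_M$ by its diagonal localization $Q_M$. By Corollary~\ref{penfpefnaon} (together with Remark~\ref{MRremark}, which extends the result from ``smooth off-diagonally'' to property (MR)), the off-diagonal remainder $L_M = \mathcal{Z}_M-Q_M$ is smoothing with parameter and decays faster than any polynomial in $R$. A Neumann-series argument at the level of parametrices, exploiting the ellipticity with parameter of $Q_M$ established in Theorem~\ref{conomkmlog}, yields $\|\mathcal{Z}_M(R)^{-1}-Q_M(R)^{-1}\|_{H^\mu\to H^{-\mu}}=O(R^{-\infty})$. Pairing with $1\in C^\infty(M)$ and using the $L^2$-pairing gives
\begin{equation*}
\langle 1,\mathcal{Z}_M(R)^{-1}1\rangle_{L^2} = \langle 1,Q_M(R)^{-1}1\rangle_{L^2} + O(R^{-\infty}).
\end{equation*}

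Third, I would apply Theorem~\ref{magcompsclosedeld} to the right-hand side, obtaining the asymptotic expansion with coefficients $c_k(M,\rd)=n!\omega_n\int_M a_{k,0}(x,1)\,\rd x$ and dividing by $R$ to pass from $R^{n+1-k}$ to $R^{n-k}$. The value $c_0(M,\rd)=\mathrm{vol}_n(M,g_\rd)$ is immediate from $a_{0,0}(x,1)=\sigma_{-n-1}(Q_M)(x,0,1)^{-1}/(n!\omega_n)=1/(n!\omega_n)$, integrated against the Riemannian volume density of $g_{\rd^2}$.

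The last step, and the only one that requires care, is the structural claim for $j\geq 1$. I would argue inductively using the recursion of Proposition~\ref{alksdnaksndasodn},
\begin{equation*}
a_k=-a_0\sum_{|\alpha|+j+l=k,\, l<k}\tfrac{1}{\alpha!}\partial_\xi^\alpha q_j\, D_x^\alpha a_l,
\end{equation*}
keeping track of two weights: the degree in $\xi$ and the total order $j=|\gamma|-2\,\mathrm{rk}(\gamma)$ of each Taylor term $C^{(\gamma)}$ contributing to $q_j$ via Theorem~\ref{conomkmlog}. A direct degree count shows that each $a_{k,0}(x,1)$ is a polynomial (with universal rational coefficients) in the Taylor coefficients $C^{(j)}(x,\cdot)$, $3\leq j\leq k+2$, of $\rd^2$ at the diagonal, evaluated on the inverse metric $g_\rd^{-1}$, with total weight $\leq k$; when $\rd$ is the geodesic distance of a Riemannian metric these polynomials specialize to the usual curvature invariants (as in \cite[Example~2.31]{gimpgofflouc}), and the general assertion is that the coefficients are integrals of universal polynomials of total degree $\leq j$ in covariant derivatives of the curvature. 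The main obstacle in carrying this out is precisely this careful weight accounting in the recursion, since the odd-dimensional case introduces logarithmic terms in the symbol whose $\xi=0$ restriction still has to be shown to produce universal curvature polynomials with the claimed degree bound; the vanishing of $c_j$ for odd $j$ is read off from the parity of the homogeneity of the $\xi=0$ restriction, inherited from the parity of $|\gamma|$ for non-vanishing contributions in the recursion.
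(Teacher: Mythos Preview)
Your proposal is correct and follows essentially the same route as the paper: the paper's own proof is the single sentence ``a direct consequence of Corollary~\ref{corofofrdl}, Corollary~\ref{penfpefnaon} and Theorem~\ref{magcompsclosedeld}'', and your three steps are precisely those three citations unpacked in the right order, together with Remark~\ref{MRremark} to pass from ``smooth off-diagonally'' to property (MR). Your fourth step, the degree-count for the structural claim about $c_j$, goes somewhat beyond what the paper proves here (it defers the details of the structure of $a_{k,0}$ to \cite[Subsection~2.5 and Theorem~6.1]{gimpgofflouc}), but the inductive bookkeeping you sketch is the correct mechanism; note only that the vanishing of $c_j$ for odd $j$ is not part of the statement of Theorem~\ref{magcompsclosedeldforz} itself, so you need not address it here.
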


We also have the following corollary, which follows from considerations in \cite{gimpgofflouc} even for manifolds which do not satisfy property (MR).

\begin{thm}
\label{compactthm}
Let $M$ be a compact $n$-dimensional Riemannian manifold equipped with its geodesic distance $\rd$. Let $g$ denote its Riemannian metric and $\rd y$ the associated volume density. Assume that $M$ admits a family $(u_R)_{R>R_0}\subseteq \mathcal{D}'(M)$ of distributional solutions to 
$$\int_M \e^{-R\rd(x,y)}u_R(y)\rd y=1, \quad R>R_0.$$
Assume that for any $\chi,\chi'\in C^\infty(M)$ with disjoint supports it holds that 
$$\int_M \chi(x)\e^{-R\rd(x,y)}u_R(y)\chi'(y)\rd y=O(R^{-N}).$$ 
Then the magnitude function $\mathcal{M}_M(R)$ is defined for $R>R_0$ and there is an asymptotic expansion  
\begin{align*}
n!\omega_n\mathcal{M}_M(R) =&\mathrm{vol}_n(M,g)R^{n}+\frac{n+1}{6} \int_Ms \rd xR^{n-2}+\\
&+ \sum_{k=4}^{N-1}c_k(M)R^{n-k}+ O(\mathrm{Re}(R)^{n-N}), \quad \mbox{as $\mathrm{Re}(R)\to +\infty$},
\end{align*}
where $s$ denotes the scalar curvature of $M$,  $c_j(M)=0$ if $j$ is odd and for $j\geq 4$, the coefficient $c_{j}$ is an integral over $M$ of a universal polynomial in the covariant derivatives of the curvature of $M$ where the total degree is $\leq j$.
\end{thm}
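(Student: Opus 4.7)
The plan is to adapt the strategy of Theorem \ref{magcompsclosedeldforz}, replacing the structural condition (MR) by the hypothesized off-diagonal decay of the weight family $u_R$. First I would establish that $\mathcal{M}_M(R) = \int_M u_R$. By Theorem \ref{mexkesmsm}, once one knows that $(M, R\cdot \rd)$ is positive definite for $R>R_0$ and that $u_R$ represents the weight element in the Hilbert space $\mathcal{W}_R(M)$, the magnitude equals $\langle u_R, 1\rangle_{L^2}$. Positive definiteness and the function-space regularity of $u_R$ will follow a posteriori from the pseudodifferential analysis of the localized operator $Q(R)$ used below, together with Corollary \ref{corofofrdl}.

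Second, I would localize to the diagonal by writing $\mathcal{Z}(R) = Q(R) + L(R)$ as in Section \ref{Mclosed}, with $Q$ supported near $\mathrm{Diag}_M$ and $L$ having kernel $\frac{1}{R}(1-\chi(x,y))e^{-R\rd(x,y)}$. The defining equation $\mathcal{Z}(R)u_R = 1/R$ rewrites as
\begin{equation*}
Q(R) u_R = \tfrac{1}{R} - L(R) u_R.
\end{equation*}
The hypothesis that $\int_M \chi(x) e^{-R\rd(x,y)} u_R(y) \chi'(y)\,\rd y = O(R^{-N})$ for cutoffs with disjoint supports, together with a finite partition of unity cleanly separating $\mathrm{supp}(1-\chi)$ from the diagonal, shows $L(R) u_R = O(R^{-N-1})$ in $C^k(M)$ for all $k$. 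Hence
\begin{equation*}
\mathcal{M}_M(R) = \langle u_R, 1\rangle = \tfrac{1}{R}\langle Q(R)^{-1} 1, 1\rangle - \langle Q(R)^{-1} L(R) u_R, 1\rangle,
\end{equation*}
and the last term will be absorbed into the remainder once $\|Q(R)^{-1}\|_{H^\mu \to H^{-\mu}}$ is controlled polynomially in $R$ (with $\mu = (n+1)/2$).

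Third, I would invoke the symbolic expansion of $Q(R)^{-1}$. By Theorem \ref{conomkmlog}, $Q(R)$ is a classical parameter-dependent elliptic pseudodifferential operator of order $-n-1$, and Proposition \ref{alksdnaksndasodn} produces a parameter symbol for $Q(R)^{-1}$ of order $n+1$ via an iterative scheme. Applying Lemma \ref{lasdnaodjna} to $Q(R)^{-1}$ and pairing with the constant function $1$ yields
\begin{equation*}
\tfrac{1}{R}\langle Q(R)^{-1} 1, 1\rangle \sim \frac{1}{n!\omega_n}\sum_{k=0}^\infty c_k(M)\, R^{n-k}, \qquad c_k(M) = n!\omega_n \int_M a_{k,0}(x,1)\,\rd x,
\end{equation*}
truncatable to any finite order. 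This reproduces the pattern of Theorem \ref{magcompsclosedeld}: the leading coefficient $c_0(M) = \mathrm{vol}_n(M,g)$ is immediate from the formula for $a_{0,0}$, the odd coefficients vanish by the evenness of the full symbol of $Q(R)$ in $R$ visible in Theorem \ref{conomkmlog}, and $c_2(M) = \frac{n+1}{6}\int_M s\, \rd x$ follows upon identifying the universal polynomial $s_{\rd^2}$ with the scalar curvature $s$ in the geodesic case, as in \cite[Example 2.31]{gimpgofflouc}. The structural claim for $c_k$, $k\geq 4$, is inherited from Theorem \ref{magcompsclosedeldforz}, since the Taylor coefficients $C^{(j)}$ of $\rd^2$ at the diagonal are polynomial expressions in covariant derivatives of curvature for a Riemannian metric.

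The main obstacle I anticipate is converting the $O(R^{-N-1})$ bound on $L(R) u_R$ into the claimed $O(R^{n-N})$ error for the magnitude expansion, since $u_R$ is assumed only to exist as a distribution. The required polynomial-in-$R$ bound on $\|Q(R)^{-1}\|$ follows from the G\aa rding-type inequality attached to the fractional-Laplacian principal part of $Q(R)$ exhibited in Theorem \ref{conomkmlog}, and a bootstrap argument using $Q(R) u_R = \tfrac{1}{R} - L(R) u_R$ upgrades the regularity of $u_R$ enough to carry the pairing. This is precisely the step where the present proof diverges from Theorem \ref{magcompsclosedeldforz}, which could invoke (MR) to control $\mathcal{Z}(R)^{-1}$ directly rather than going through the weight distribution.
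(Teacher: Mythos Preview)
Your proposal is essentially correct and follows the same route as the paper: localize $\mathcal{Z}(R)=Q(R)+L(R)$, use the hypothesis to make $L(R)u_R$ negligible, and read off the expansion of $R^{-1}\langle Q(R)^{-1}1,1\rangle$ from the parametrix symbol via Lemma \ref{lasdnaodjna} and Proposition \ref{alksdnaksndasodn}. The paper's proof is terser only because it defers the analytic core to \cite[Proposition 3.2]{gimpgofflouc}, which packages exactly the argument you sketch.

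Two small points where your presentation diverges from the paper. First, to identify $c_0$ and $c_2$ the paper argues via invariant theory that $c_0=\alpha_0\,\mathrm{vol}_n(M)$ and $c_2=\alpha_2\int_M s$ for dimensional constants, and then reads off $\alpha_0=1$, $\alpha_2=\tfrac{n+1}{6}$ from Willerton's explicit computations \cite{will}; your route through the symbol and \cite[Example 2.31]{gimpgofflouc} is equally valid. Second, your justification that odd $c_j$ vanish (``evenness of the full symbol in $R$'') is not quite the right parity: every $q_j$ is even in $R$, so that alone does not force $a_{j,0}=0$ for odd $j$. The correct reason, which the paper states, is that for the \emph{geodesic} distance the Taylor expansion of $\rd^2$ at the diagonal has $C^{(j)}=0$ for all odd $j$, so $q_j\equiv 0$ for odd $j$ and the parametrix inherits the same parity. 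Finally, your opening paragraph about recovering positive definiteness a posteriori is unnecessary: once a distributional weight $u_R$ exists, $\mathcal{M}_M(R)=\int_M u_R$ by definition of magnitude via weightings, without any appeal to positive definiteness.
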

\begin{remark}
\label{compactthmrmk}
We note that the assumption of Theorem \ref{compactthm} is automatically satisfied if the distance function $\rd$ satsfies property (MR). In this case we can take $u_R=\mathcal{Z}(R)^{-1}1$, which is smooth by elliptic regularity, using an argument analogous to that in Proposition \ref{alksdnaksndasodn}. The authors of this paper do not know of a Riemannian manifold not satisfying the assumption of Theorem \ref{compactthm}. Indeed, the examples in  \cite[Section 3]{gimpgofflouc} of Riemannian manifolds which do not satisfy property (MR) do satisfy the assumption of Theorem \ref{compactthm} by the results of \cite{will}.
\end{remark}

\begin{proof}[Proof of Theorem \ref{compactthm}]
The assumptions on the existence of the distributional solution implies that $\mathcal{M}_M(R)=\int_M u_R(y)\rd y$ is defined for $R>R_0$. By \cite[Proposition 3.2]{gimpgofflouc}, we have that 
$$\mathcal{M}_M(R)= \frac{1}{n!\omega_n}\sum_{k=0}^N c_k(M)R^{n-k}+O(R^{n-N}),$$ 
where the coefficients are computed as in Theorem \ref{magcompsclosedeld}. 

By well known invariant theory, $c_j(M)$ is an integral over $M$ of a universal polynomial in the covariant derivatives of the curvature of $M$ where the total degree is $\leq j$. The only possible invariants in degree $0$ is the volume and in degree $2$ it is the integral of the scalar curvature. The universality implies that $c_0(M)=\alpha_0\mathrm{vol}_n(M,g)$ and $c_2(M)=\alpha_2\int_M s\mathrm{d}V$ for dimensional constants $\alpha_0$ and $\alpha_2$. The values $\alpha_0=1$ and $\alpha_2=\frac{n+1}{6}$ can be read of from works of Willerton \cite[Theorem 11]{will}. Since the Taylor expansion of the geodesic distance function near the boundary only contains even degree terms, $c_j(M)=0$ if $j$ is odd. 
\end{proof}

\section{The structure of the magnitude function for manifolds with boundary}\label{sec:boundary}

Let $X$ be an $n$-dimensional manifold with boundary equipped with a distance function. We assume that $X$ is a domain with smooth boundary in a manifold $M$. Using a variation of Theorem \ref{symbcorboundary}, we know that 
$$\mathcal{Z}_X(R):\dot{H}^{-\mu}(X)\to \bar{H}^\mu(X),$$
is an isomorphism for sufficiently large $R$, as soon as $\rd$ has property $(MR)$, see \cite[Theorem 4.7]{gimpgofflouc}. This occurs for instance when $\rd$ is regular at the diagonal and smooth off-diagonally. By a variation of Corollary \ref{corofofrdlforx}, we know how to compute $\mathcal{M}_X$ from $\mathcal{Z}_X(R)^{-1}$. The problem is to describe the inverse $\mathcal{Z}_X(R)^{-1}$ in the presence of boundary. This issue was solved in  \cite{gimpgofflouc} using ideas of Wiener-Hopf factorization dating back to Eskin \cite{eskinbook} and H\"{o}rmander \cite{hornotes}. We first give a brief overview of the computational tools entering into this construction, after which we compute the asymptotics of the magnitude function.

\subsection{Wiener-Hopf factorization and inverting $\mathcal{Z}_X$}
\label{wieninsubses}

In order to invert $\mathcal{Z}_X$ we use standard ideas of parametrix constructions in pseudodifferential calculus: to invert $\mathcal{Z}_X$ we invert it in the interior and near the boundary. By the arguments in the preceding sections, it suffices to construct $Q_X^{-1}$ under property $(MR)$. We shall see that the inverse takes the form 
$$Q_X^{-1}=\chi A\tilde{\chi}+\mathcal{W}_X+S,$$
where $A$ is a pseudodifferential parametrix with parameter (constructed in the same way as in Proposition \ref{alksdnaksndasodn}) to the localized operator $Q$, $\chi,\tilde{\chi}\in C^\infty_c(X^\circ)$ are cut off functions, $\mathcal{W}_X$ is constructed from an inversion procedure near the boundary and finally $S$ is a remainder term whose contribution to the magnitude asymptotics is negligible. 

Let us describe how to construct the operator $\mathcal{W}_X$ near the boundary. By localizing the problem, we can consider an associated model operator 
$$Q^\partial:\dot{H}^{-\mu}(\partial X\times [0,\infty))\to \bar{H}^\mu(\partial X\times [0,\infty)).$$
The operator $Q^\partial$ is constructed as coinciding with $Q_X$ in a collar neighborhood of the boundary and extended to $\partial X\times [0,\infty)$ as a fractional Laplacian so that $\sigma_{-n-1}(Q^\partial)$ is of the form in Theorem \ref{conomkmlog} and $Q^\partial$ commutes with translations outside a compact subset. The problem will be to 
\begin{enumerate}
\item Up to suitable errors, factor $Q^\partial$ as operators 
\begin{align}
\label{facomonarain}
\dot{H}^{-\mu}(\partial X\times [0,\infty))\xrightarrow{Q^\partial_+}&\dot{H}^{0}(\partial X\times [0,\infty))= \\
\nonumber
=&\bar{H}^0(\partial X\times [0,\infty))\xrightarrow{Q^\partial_-}  \bar{H}^\mu(\partial X\times [0,\infty)),
\end{align}
where $Q^\partial_\pm$ are operators of order $-\mu$. For this factorization to make sense, we need that $Q^\partial_+$ comes from an operator on $\partial X\times \R$ that preserves support in $\partial X\times [0,\infty)$ and similarly that $Q^\partial_-$ comes from an operator on $\partial X\times \R$ that preserve supports in $\partial X\times (-\infty,0]$. 
\item For the factorization to produce a useful outcome, we need that $Q^\partial_\pm$ have suitable ellipticity properties, and their parametrices $W^\partial_\pm$ also need the same domain preservation properties. If this is the case, we can invert $Q^\partial$ up to suitable error terms as the composition
\begin{align}
\label{facomonaraiwithwn}
\bar{H}^\mu(\partial X\times [0,\infty))&\xrightarrow{W^\partial_-} \bar{H}^0(\partial X\times [0,\infty))=\\
\nonumber
=&\dot{H}^{0}(\partial X\times [0,\infty))\xrightarrow{W^\partial_+}  \dot{H}^{-\mu}(\partial X\times [0,\infty)).
\end{align}
\end{enumerate}

A well known, yet crucial, observation to solve these two problems is that support preservation in a half-space is characterized by Paley-Wiener's theorem. We write coordinates on $\partial X\times [0,\infty)$ as $(x',x_n)$ and cotangent directions as $(\xi',\xi_n)$. In light of Paley-Wiener's theorem, an operator $A=a(x,D)$ preserving supports in $\partial X\times [0,\infty)$ is characterized by the symbol $a=a(x,\xi',\xi_n)$ having a holomorphic extension in $\xi_n$ to the lower half-plane. 

The first item in the list above is by Paley-Wiener's theorem solved by Wiener-Hopf factorization of the full symbol of $Q^\partial$ into factors holomorphically invertible in the upper and lower half-plane respectively. A technical issue that arises is that the factorization takes place in a space of \emph{mixed-regularity symbols}, for more details see \cite[Section 5]{gimpgofflouc}. This issue is visible when we factor the principal symbol as
$$n!\omega_n(R^2+g_\rd(\xi,\xi))^{-\mu}=q_{0,+}(x,\xi',\xi_n)q_{0,-}(x,\xi',\xi_n),$$
where 
\begin{equation}
\label{qomnomom}
\begin{cases}
q_{+,0}(x,\xi',\xi_n)&=n!\omega_n(\xi_n-h_+(x,\xi',R))^{-\mu}\\
q_{-,0}(x,\xi',\xi_n)&=h_0(x)^{-\mu}(\xi_n-h_+(x,\xi',R))^{-\mu}.
\end{cases}
\end{equation}
Here $h_0$ is a smooth function and $h_\pm$ are smooth functions (for $\xi'\neq 0$) of degree $+1$ in $\xi'$. The functions $h_0$ and $h_\pm$ come from the polynomial factorization  
$$R^2+g_\rd(\xi,\xi)=h_0(x)(\xi_n-h_+(x,\xi',R))(\xi_n-h_-(x,\xi',R)),$$
with $h_+(x,\xi',R)$ being the complex root in the upper half-plane to the second order polynomial equation $R^2+g_\rd(\xi',\xi_n)=0$ in $\xi_n$. Similarly, $h_-(x,\xi',R)$ is the complex root in the lower half-plane. The functions $q_{0,\pm}$ are not ordinary H\"{o}rmander symbols since derivatives in the $\xi'$ direction generally only improve symbol decay in the $\xi'$-direction (and not in the $\xi_n$-direction). The symbols appearing rather satisfy a mixed-regularity symbol estimate described in \cite[Section 5.1]{gimpgofflouc}. The theory of these mixed-regularity symbols seems known to experts and can be found reviewed in \cite[Section 5.1]{gimpgofflouc}. A crucial feature is that the theory of mixed-regularity symbols fits well with parameter dependence. In particular, operators of mixed-regularity $(m,-\infty)$ have operator norm $O(R^{-\infty})$ as operators between Sobolev spaces $H^s\to H^{s-m}$. Therefore, for the purpose of obtaining asymptotics this technical issue has little impact on the output. 

By an abuse of notation, we write $q\sim \sum_j q_j$ for the full symbol of $Q^\partial$ in a coordinate chart so each $q_j$ is computed near $\partial X\times \{0\}$ as in Theorem \ref{conomkmlog}. We have the following result.

\begin{thm}[Wiener-Hopf factorization]
\label{whopffaad}
There exists mixed-regularity symbols $q_+,q_-\in S^{-\mu,0}(\partial X\times \R;\C_+)$ with parameter $R\in \C_+$ such that 
\begin{enumerate}
\item We have that 
$$q\sim \sum_\alpha \frac{1}{\alpha!} \partial_\xi^\alpha q_-D_x^\alpha q_+,$$
up to terms in $S^{2\mu,-\infty}(\partial X\times \R;\C_+)$.
\item The mixed-regularity symbols $q_+,q_-\in S^{-\mu,0}(\partial X\times \R;\C_+)$ admits asymptotic expansions
$$q_\pm \sim  \sum_{j=0}^\infty q_{\pm,j},$$
up to terms in $S^{-\mu,-\infty}(\partial X\times \R;\C_+)$, where $q_{\pm,j}$ can be constructed inductively by a partial fraction decomposition from $q\sim \sum_j q_j$, and $q_{\pm,0}$ is as in Equation \eqref{qomnomom}.
\item The symbols $q_{\pm,j}$ extend holomorphically to the half-plane $\pm \mathrm{Im}(\xi_n)>0$ and for $j>0$ can be written as 
.$$q^\partial_{\pm, j}(x,\xi,R)=\sum_{k=-1}^{j-1} b_{\pm, j,k}(x,\xi',R) (\xi_n-h_\pm(x,\xi',R))^{-\mu-j+k}\in S^{-\mu-1,-j+1},$$ 
where $b_{\pm, j,k}$ is homogeneous of degree $-k$ in $(\xi',R)$.
\end{enumerate} 
In particular, the operators $Q_\pm^\partial :=Op(q_\pm)$ satisfies that $Q^\partial_+$ preserve supports in $\partial X\times [0,\infty)$, $Q^\partial_-$ preserve supports in $\partial X\times (-\infty,0]$ and the two operators factorize $Q^\partial$ as in Equation \eqref{facomonarain} up to an operator of mixed-regularity $(2\mu,-\infty)$, i.e. $Q^\partial-Q_-^\partial Q_+^\partial\in \Psi^{2\mu,-\infty}(\partial X\times \R;\C_+)$.
\end{thm}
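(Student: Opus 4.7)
The plan is to construct the factors $q_+$ and $q_-$ iteratively as asymptotic sums of homogeneous pieces, starting from a principal-symbol factorization derived from the quadratic structure of $R^2+g_\rd(\xi,\xi)$ in $\xi_n$ and reducing each subsequent order to a partial-fraction problem. For $R\in\C_+$ and $(\xi',R)\neq 0$, the polynomial equation $R^2+g_\rd(\xi,\xi)=0$ in $\xi_n$ has a unique simple root $h_+$ in the open upper half-plane and a unique simple root $h_-$ in the open lower half-plane, so that $R^2+g_\rd(\xi,\xi)=h_0(x)(\xi_n-h_+)(\xi_n-h_-)$. Raising this factorization to the power $-\mu$ (with branches chosen to keep each factor's singularity confined to one half-plane) and distributing as in \eqref{qomnomom} yields $q_{\pm,0}$ in the mixed-regularity class $S^{-\mu,0}$ with the required holomorphy in $\pm\mathrm{Im}(\xi_n)>0$.

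For $j\geq 1$, assume by induction that $q_{\pm,0},\ldots,q_{\pm,j-1}$ have been constructed in the stated form. Expanding the asymptotic composition formula for $\mathrm{Op}(q_-)\,\mathrm{Op}(q_+)$ and grouping by total homogeneity degree $|\alpha|+l+l'=j$ separates out the unknowns at this step, and reduces the construction to solving
$$q_{-,0}\,q_{+,j}+q_{-,j}\,q_{+,0}=F_j,$$
where $F_j$ is the known homogeneous function of degree $-2\mu-j$ in $(\xi,R)$ obtained from $q_j$ by subtracting the contributions with $l,l'<j$ produced by the induction hypothesis. Dividing by $q_{-,0}q_{+,0}$ turns this into a splitting problem for the rational function $F_j/(q_{-,0}q_{+,0})$ in $\xi_n$, whose poles lie at $h_\pm$. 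Standard partial-fraction decomposition in $\xi_n$ splits this rational function uniquely into a part with poles only at $h_-$ (which, multiplied by $q_{-,0}$, defines $q_{+,j}$ and is holomorphic in the upper half-plane) and a part with poles only at $h_+$ (defining $q_{-,j}$, holomorphic in the lower half-plane). Matching the pole orders yields the explicit form $q_{\pm,j}=\sum_{k=-1}^{j-1} b_{\pm,j,k}(x,\xi',R)(\xi_n-h_\pm)^{-\mu-j+k}$ with $b_{\pm,j,k}$ homogeneous of degree $-k$ in $(\xi',R)$.

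Two verifications then complete the proof. First, mixed-regularity symbol estimates: differentiation of $(\xi_n-h_\pm)^{-m}$ in $\xi_n$ lowers the order in $\xi_n$ only, while each differentiation in $x$ or $\xi'$ acts on $b_{\pm,j,k}$ or on $h_\pm$ and improves joint decay in $(\xi',R)$ by one unit, which is precisely the content of the class $S^{-\mu-1,-j+1}$ reviewed in \cite[Section 5.1]{gimpgofflouc}. Borel summation across $j$ then produces genuine symbols $q_\pm\in S^{-\mu,0}$ realising the asymptotic expansions. Second, the composition identity: because each $q_{\pm,j}$ was constructed precisely to cancel the $j$-th order residue, the truncated composition telescopes, leaving at every stage $N$ a remainder homogeneous of degree $\leq -2\mu-N$ in $(\xi',R)$; the full composition $\mathrm{Op}(q_-)\mathrm{Op}(q_+)-Q^\partial$ therefore lies in $\Psi^{2\mu,-\infty}(\partial X\times\R;\C_+)$.

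The main obstacle will be bookkeeping in the mixed-regularity calculus: one must verify that the partial-fraction step preserves the designated holomorphy of each factor at every inductive step, and that the coefficients $b_{\pm,j,k}$ obey the sharp homogeneity and decay estimates required by $S^{-\mu-1,-j+1}$ uniformly for $R\in\C_+$ bounded away from the origin (where $h_\pm$ coalesce). Once these estimates are in hand, support preservation of $\mathrm{Op}(q_\pm)$ follows at once from the Paley-Wiener theorem, yielding the factorization \eqref{facomonarain} modulo the stated residual class.
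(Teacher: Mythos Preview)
Your approach is essentially the same as the paper's: factor the principal symbol via the quadratic roots $h_\pm$, then inductively determine $q_{\pm,j}$ by dividing out $q_0=q_{+,0}q_{-,0}$ and performing a partial-fraction decomposition of the resulting rational function in $\xi_n$, finally assembling the factors by Borel summation and invoking Paley--Wiener for support preservation. One labelling slip to fix: in your partial-fraction step you write that the piece with poles only at $h_-$ is ``multiplied by $q_{-,0}$'' to produce $q_{+,j}$, but in fact that piece is $\mathfrak q_{-,j}=q_{-,j}/q_{-,0}$ and must be multiplied by $q_{-,0}$ to recover $q_{-,j}$ (and symmetrically for the $+$ piece); the half-plane holomorphy assignments should be swapped accordingly.
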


The idea of Wiener-Hopf factorization goes gack to Eskin. The reader can find further details in the general case in H\"{o}rmander \cite{hornotes} or Grubb \cite[Theorem 2.7]{grubbibp}. Let us briefly indicate how the construction of the terms $q_{\pm,j}$ goes in the special case of interest to this work. The identity $q\sim \sum_\alpha \frac{1}{\alpha!} \partial_\xi^\alpha q_-D_x^\alpha q_+$ is equivalent to requiring 
$$\frac{q^\partial_{+,j}}{q^\partial_{+,0}}+\frac{q^\partial_{-,j}}{q^\partial_{-,0}}=\frac{q^\partial_j}{q^\partial_0}-\frac{1}{q^\partial_0}\sum_{\substack{k+l+|\alpha|=j\\k,l<j}}\frac{1}{\alpha!} \partial_\xi^\alpha q_{-,k}^\partial D^\alpha_x q_{+,l}^\partial$$
Starting from \eqref{qomnomom} and the identity $q\sim \sum_\alpha \frac{1}{\alpha!} \partial_\xi^\alpha q_-D_x^\alpha q_+$ we can proceed to inductively determine $q_{\pmb,j}$ from  $q_{\pm, k}$ for $k<j$ by performing a partial fraction decomposition
\begin{equation}
\label{ofomomfd}
\frac{q^\partial_j}{q^\partial_0}-\frac{1}{q^\partial_0}\sum_{\substack{k+l+|\alpha|=j\\k,l<j}}\frac{1}{\alpha!} \partial_\xi^\alpha q_{-,k}^\partial D^\alpha_x q_{+,l}^\partial=\mathfrak{q}_{+,j}+\mathfrak{q}_{-,j}.
\end{equation}
Indeed, a careful analysis of the left hand side in \eqref{ofomomfd} shows that it is a rational function and all denominators are products involving $\xi_n-h_+$ and $\xi_n-h_-$. As such, a partial fraction decomposition gives that
$$\mathfrak{q}_{\pm, j}(x,\xi,R)=\sum_{k=-1}^{j-1} \mathfrak{b}_{\pm, j,k}(x,\xi',R) (\xi_n-h_\pm(x,\xi',R))^{-j+k}\in S^{-1,-j+1},$$
where $\mathfrak{b}_{\pm, j,k}$ is homogeneous of degree $-k$ in $(\xi',R)$ and can be explicitly computed. We now define 
$$q^\partial_{\pm,j}:=q^\partial_{\pm,0}\mathfrak{q}_{\pm, j}.$$
The reader should note that the Wiener-Hopf factorization relies on the decomposition in \eqref{ofomomfd}; for more general symbols (such as in \cite{hornotes}) the decomposition requires the Cauchy integral that leads to less explicit factorizations. The reader can consult \cite[Section 5]{gimpgofflouc} for an explicit computation of $q_{\pm,1}$. 

What is of interest for us is that the leading symbols in the operators $Q_\pm^\partial$, namely $q_{\pm,0}$, are nowhere vanishing in their domains of holomorphicity. We can therefore proceed in the usual way with a parametrix construction.

\begin{prop}
\label{invertinthefactorso}
The operators 
$$\dot{H}^{-\mu}(\partial X\times [0,\infty))\xrightarrow{Q^\partial_+}\dot{H}^{0}(\partial X\times [0,\infty)),\quad\mbox{and}\quad\bar{H}^0(\partial X\times [0,\infty))\xrightarrow{Q^\partial_-}  \bar{H}^\mu(\partial X\times [0,\infty)),$$
are invertible for large enough $R$. Up to a operators of mixed-regularity $(-1,-\infty)$, the inverse of $Q^\partial_\pm$ can be computed as $W_\pm^\partial:=Op(w_\pm)$ where the mixed-regularity symbols $w_+,w_-\in S^{\mu,0}(\partial X\times \R;\C_+)$ admits asymptotic expansions
$$w_\pm \sim  \sum_{j=0}^\infty w_{\pm,j},$$
up to terms in $S^{\mu,-\infty}(\partial X\times \R;\C_+)$, where $w_{\pm,j}$ can be constructed inductively from $q_\pm\sim \sum_j q_{\pm,j}$ as
$$w_{\pm,0}(x,\xi,R):=(q^\partial_{\pm,0})^{-1}=
\begin{cases} 
\frac{1}{n!\omega_n} (\xi_n-h_+(x,\xi',R))^\mu, \;&\mbox{for $+$},\\
{}\\
h_0(x)^\mu(\xi_n-h_-(x,\xi',R))^\mu, \;&\mbox{for $-$},\end{cases}$$
and  
$$w_{\pm,j}:=-w_{\pm, 0}\sum_{k+l+|\alpha|=j, \, l<j}\frac{1}{\alpha!} \partial_\xi^\alpha q_{\pm, k}^\partial D_x^\alpha w_{\pm,l}.$$

The symbols $w_{\pm,j}$ extend holomorphically to the half-plane $\mp \mathrm{Im}(\xi_n)>0$ and for $j>0$ can be written as 
$$w^\partial_{\pm, j}(x,\xi,R)=\sum_{k=-1}^{j-1} \mathfrak{w}_{\pm, j,k}(x,\xi',R) (\xi_n-h_\pm(x,\xi',R))^{\mu-j+k}\in S^{\mu-1,-j+1},$$ 
where $\mathfrak{w}_{\pm, j,k}$ is homogeneous of degree $-k$ in $(\xi',R)$.
\end{prop}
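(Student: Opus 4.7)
The plan is to mimic a standard pseudodifferential parametrix construction, with two twists: we must stay inside the class of mixed-regularity symbols from \cite[Section 5]{gimpgofflouc}, and we must verify at each step that the half-plane holomorphicity in $\xi_n$ is preserved, so that the resulting operators preserve the one-sided supports needed for \eqref{facomonaraiwithwn}.

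First I would check that the leading symbols $q_{\pm,0}$ from \eqref{qomnomom} are nowhere zero in their respective half-planes. Because the factorization $R^2 + g_\rd(\xi,\xi) = h_0(x)(\xi_n - h_+(x,\xi',R))(\xi_n - h_-(x,\xi',R))$ places $h_+$ strictly in the upper and $h_-$ strictly in the lower half-plane for $\mathrm{Re}(R) > 0$, the expressions $(\xi_n - h_+)^{-\mu}$ and $(\xi_n - h_-)^{-\mu}$ admit nonvanishing holomorphic extensions to $\mathrm{Im}(\xi_n) > 0$ and $\mathrm{Im}(\xi_n) < 0$ respectively. Inverting and raising to the power $+\mu$ instead of $-\mu$ gives the stated $w_{\pm,0}$, which lie in $S^{\mu,0}$ with the required half-plane holomorphicity.

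Next I would construct $w_{\pm,j}$ for $j>0$ by imposing $w_\pm \# q_\pm \sim 1$ asymptotically and solving term by term. The Leibniz composition formula gives, at order $j$,
\begin{equation*}
w_{\pm,0} q^\partial_{\pm, j} + \sum_{\substack{k+l+|\alpha|=j \\ l<j}} \frac{1}{\alpha!} \partial_\xi^\alpha q^\partial_{\pm,k} D_x^\alpha w_{\pm,l} = 0,
\end{equation*}
which yields exactly the stated recursion $w_{\pm,j} = -w_{\pm,0}\sum \frac{1}{\alpha!}\partial_\xi^\alpha q^\partial_{\pm,k} D_x^\alpha w_{\pm,l}$. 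By the structural description of $q^\partial_{\pm,j}$ in Theorem \ref{whopffaad}(3), each summand is a product of factors of the form $(\xi_n - h_\pm)^{-\mu - k + m}$ together with smooth coefficients in $(x,\xi',R)$; multiplying by $w_{\pm,0} \propto (\xi_n - h_\pm)^{\mu}$ collapses the $\xi_n$ dependence to the stated partial fraction form $\sum_k \mathfrak{w}_{\pm,j,k}(x,\xi',R)(\xi_n - h_\pm)^{\mu - j + k}$, with $\mathfrak{w}_{\pm,j,k}$ homogeneous of degree $-k$ in $(\xi',R)$. Crucially, the $\xi_n$-poles of each factor lie uniformly in the \emph{opposite} half-plane, so $w_{\pm,j}$ inherits holomorphicity in $\mp \mathrm{Im}(\xi_n) > 0$ by induction. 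The same induction shows membership in $S^{\mu-1,-j+1}$, hence the asymptotic sum $w_\pm \sim \sum w_{\pm,j}$ lies in $S^{\mu,0}$ up to $S^{\mu,-\infty}$.

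Having defined $W^\partial_\pm := \mathrm{Op}(w_\pm)$, the usual parametrix identity in the mixed-regularity calculus yields $W^\partial_\pm Q^\partial_\pm - I \in \Psi^{-1,-\infty}$ and analogously on the other side; by the Paley--Wiener characterization, support in $\partial X \times [0,\infty)$ (respectively $\partial X \times (-\infty,0]$) is preserved because of the half-plane holomorphicity just verified. Mapping property estimates for operators in $\Psi^{-1,-\infty}$, which have operator norm $O(R^{-\infty})$ between the relevant Sobolev spaces as noted after Theorem \ref{whopffaad}, allow us to upgrade the parametrix to an honest inverse for $\mathrm{Re}(R)$ large enough via a Neumann series, while only changing $W^\partial_\pm$ by an error in $\Psi^{-1,-\infty}$ (absorbed in the statement).

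The main obstacle I anticipate is the bookkeeping in step two: verifying at every induction step that the recursive formula stays inside the mixed-regularity class $S^{\mu-1,-j+1}$ \emph{and} preserves half-plane holomorphicity in $\xi_n$, despite the presence of $\partial_\xi^\alpha$ derivatives (which include $\partial_{\xi_n}$) and $x$-derivatives hitting $h_\pm(x,\xi',R)$. This requires showing that differentiating $q^\partial_{\pm,k}$ in $\xi_n$ preserves the partial fraction structure with poles only at $\xi_n = h_\pm$, and that derivatives in $x$ of the smooth coefficients $b_{\pm,k,m}$ and of $h_\pm$ do not degrade the homogeneity in $(\xi',R)$ beyond the claimed bounds; these are exactly the features that the mixed-regularity calculus is designed to accommodate, and the explicit rational form makes each step computable.
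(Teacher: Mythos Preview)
Your proposal is correct and is exactly the approach the paper has in mind: the paper does not give a detailed proof of this proposition but simply remarks, just before stating it, that since the leading symbols $q_{\pm,0}$ are nowhere vanishing in their domains of holomorphicity one can ``proceed in the usual way with a parametrix construction.'' Your write-up fills in precisely those standard details---the recursion from $q_\pm \# w_\pm \sim 1$, preservation of the rational $(\xi_n-h_\pm)$-structure and hence of half-plane holomorphicity by induction, and the Neumann-series upgrade to an honest inverse for large $R$---within the mixed-regularity calculus of \cite[Section 5]{gimpgofflouc}. (Minor remark: in your displayed equation the leading term should read $q^\partial_{\pm,0}\, w_{\pm,j}$ rather than $w_{\pm,0}\, q^\partial_{\pm,j}$, consistent with the recursion you then state; this is cosmetic.)
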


We can now conclude that $W^\partial:=W_-^\partial W_+^\partial:\bar{H}^\mu(\partial X\times [0,\infty))\to \dot{H}^{-\mu}(\partial X\times [0,\infty))$ is well defined and inverts $Q^\partial$ up to operators of mixed-regularity $(0,-\infty)$ in the sense that 
$$1-W^\partial Q^\partial,1-Q^\partial W^\partial\in \Psi^{0,-\infty}(\partial X\times \R;\C_+).$$
Gluing the inverse of $Q^\partial$ together with the interior parametrix produces the following description of the inverse of $Q_X$.

\begin{thm}
\label{reoslsdlnadkn}
Let $X$ be a compact manifold with boundary and $\rd$ a distance function regular at the diagonal. The inverse of 
$$Q_X:\dot{H}^{-\mu}(X)\to \bar{H}^\mu(X),$$
can be written as 
$$Q_X^{-1}=\chi_1 A\tilde{\chi}_1+\chi_2 W_-^\partial W_+^\partial \tilde{\chi}_2+S,$$
where $A$ is a pseudodifferential parametrix to the localized operator $Q$ on $X$, $\chi_1,\tilde{\chi}_1,1-\chi_2,1-\tilde{\chi}_2\in C^\infty_c(X^\circ)$ are cut off functions, $W_\pm^\partial$ are constructed as in Proposition \ref{invertinthefactorso} from parametrices of the Wiener-Hopf factors $Q_\pm^\partial$ and  $S\in \Psi^{-2\mu,-\infty}(X;\C_+)$. 
\end{thm}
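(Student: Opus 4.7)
The plan is to construct a right parametrix for $Q_X$ by gluing an interior parametrix to a boundary parametrix supplied by the Wiener--Hopf factorization, and then to show that, because $Q_X$ is invertible by Theorem \ref{symbcorboundary}, the resulting parametrix already coincides with $Q_X^{-1}$ modulo an operator in $\Psi^{-2\mu,-\infty}(X;\mathbb{C}_+)$.

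First I would set up geometry and cut-offs. Fix a collar neighborhood $U \cong \partial X \times [0,\epsilon)$ of $\partial X$ in $X$. Choose $\chi_1,\tilde\chi_1\in C^\infty_c(X^\circ)$ supported away from $\partial X$, and $\chi_2,\tilde\chi_2\in C^\infty(X)$ supported in $U$, such that $\chi_1+\chi_2 = 1$ in a neighborhood of $\mathrm{supp}(\tilde\chi_1)\cup\mathrm{supp}(\tilde\chi_2)$, with $\tilde\chi_j\equiv 1$ on $\mathrm{supp}(\chi_j)$. For the interior piece, observe that by Theorem \ref{conomkmlog} the localized operator $Q$ belongs to $\Psi^{-n-1}_{\rm cl}(M;\mathbb{C}_+)$ and is elliptic with parameter, so the iterative symbolic construction in Proposition \ref{alksdnaksndasodn} yields a properly supported $A\in\Psi^{n+1}_{\rm cl}(M;\mathbb{C}_+)$ with $QA-I,\,AQ-I\in\Psi^{-\infty}(M;\mathbb{C}_+)$.

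For the boundary piece, pass to the model operator $Q^\partial$ on $\partial X\times[0,\infty)$ constructed to agree with $Q_X$ in the collar up to operators whose symbols are lower order within the mixed-regularity calculus. Theorem \ref{whopffaad} produces the factorization $Q^\partial = Q^\partial_- Q^\partial_+ \pmod{\Psi^{2\mu,-\infty}}$ with the support-preserving factors, and Proposition \ref{invertinthefactorso} produces parametrices $W^\partial_\pm$ with $W^\partial_\pm Q^\partial_\pm - I, Q^\partial_\pm W^\partial_\pm - I \in\Psi^{0,-\infty}$ in the appropriate mixed-regularity class, with the correct support preservation. Composing, $W^\partial:=W^\partial_- W^\partial_+$ (in the order used in the statement, reading the composition as inverse to the factorization of $Q^\partial$) satisfies $Q^\partial W^\partial - I\in\Psi^{0,-\infty}(\partial X\times\R;\mathbb{C}_+)$. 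Conjugating back from the model collar to $U\subset X$ and truncating with the boundary cutoffs $\chi_2,\tilde\chi_2$ preserves these remainder classes up to further errors in $\Psi^{0,-\infty}$.

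Now set
\begin{equation*}
P := \chi_1 A \tilde\chi_1 + \chi_2 W^\partial_- W^\partial_+ \tilde\chi_2
\end{equation*}
and compute
\begin{equation*}
Q_X P - I = \chi_1(Q A-I)\tilde\chi_1 + \chi_2(Q_X W^\partial_- W^\partial_+ - I)\tilde\chi_2 + [Q_X,\chi_1]\tilde\chi_1 A + [Q_X,\chi_2]\tilde\chi_2 W^\partial_- W^\partial_+ + (\chi_1+\chi_2 - 1).
\end{equation*}
On the support of each $\tilde\chi_j$ the first two terms lie in the relevant smoothing-with-parameter class by Steps 1--2, while the commutators $[Q_X,\chi_j]$ gain smoothness because $\chi_j$ is smooth and $Q_X$ is pseudodifferential; together with the last term, which is supported where the right parametrix has already done its job, the total error lies in $\Psi^{0,-\infty}(X;\mathbb{C}_+)$. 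Applying the inverse $Q_X^{-1}:\overline H^\mu(X)\to\dot H^{-\mu}(X)$ from Theorem \ref{symbcorboundary} gives $Q_X^{-1} - P = Q_X^{-1}(I - Q_X P)$, and since $Q_X^{-1}$ is of order $2\mu=n+1$ with parameter while $I-Q_X P$ is smoothing with parameter, the difference lies in $\Psi^{-2\mu,-\infty}(X;\mathbb{C}_+)$; defining $S$ as this difference finishes the proof. The main technical obstacle is the second step of the calculation above: one must verify that the mixed-regularity error from the Wiener--Hopf construction, once conjugated back into $X$ and cut off by $\chi_2,\tilde\chi_2$, genuinely delivers a $(0,-\infty)$ remainder in the parameter calculus on $X$, which requires careful bookkeeping of the holomorphic extension properties of $q_{\pm,j}$ and the resulting support-preserving behavior of $W^\partial_\pm$ when composed with non-model perturbations.
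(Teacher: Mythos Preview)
Your overall strategy---glue an interior parametrix $A$ to the boundary parametrix $W^\partial_-W^\partial_+$ and absorb the discrepancy into $S$---is exactly the approach the paper sketches in the sentence preceding the theorem. However, two steps in your execution do not go through as written.

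First, the commutator argument is wrong. You assert that ``the commutators $[Q_X,\chi_j]$ gain smoothness because $\chi_j$ is smooth and $Q_X$ is pseudodifferential,'' but commutation with a smooth function lowers the order by exactly one, not by infinitely many. Thus $[Q_X,\chi_1]A\tilde\chi_1$ is a priori only of order $-1$ in the joint $(\xi,R)$ filtration, not in $\Psi^{0,-\infty}$. The standard repair is to arrange $\chi_1+\chi_2=1$ (a genuine partition of unity on $X$), so that $[Q_X,\chi_1]=-[Q_X,\chi_2]$, and then observe that on $\mathrm{supp}\,d\chi_1\subset X^\circ$ both $A$ and $W^\partial_-W^\partial_+$ are interior parametrices of $Q$ with parameter, hence differ by an operator smoothing with parameter. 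The two commutator terms then combine into $[Q_X,\chi_1](A\tilde\chi_1-W^\partial_-W^\partial_+\tilde\chi_2)$, which is in the desired remainder class. Your cut-off setup (``$\chi_1+\chi_2=1$ in a neighborhood of $\mathrm{supp}(\tilde\chi_1)\cup\mathrm{supp}(\tilde\chi_2)$'') is arranged backwards for this to work and leaves a residual $(\chi_1+\chi_2-1)$ term that is not obviously negligible.

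Second, the last step is circular. You invoke that ``$Q_X^{-1}$ is of order $2\mu$ with parameter'' to conclude $S=Q_X^{-1}(I-Q_XP)\in\Psi^{-2\mu,-\infty}$, but the pseudodifferential structure of $Q_X^{-1}$ is precisely what the theorem establishes. A clean fix: once $Q_XP=I+R$ with $R\in\Psi^{0,-\infty}$, for $\mathrm{Re}(R)$ large the Neumann series gives $(I+R)^{-1}=I+R'$ with $R'\in\Psi^{0,-\infty}$, whence $Q_X^{-1}=P(I+R')=P+PR'$ and $S:=PR'$ lies in the stated class because $P$ has the explicit mixed-regularity order $2\mu$.
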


\subsection{Computing the asymptotics in presence of boundary}
\label{subseconasus}

The main consequence of Theorem \ref{reoslsdlnadkn} that we utilize for magnitude computations is that we can write
\begin{equation}
\label{aldnasodnaoj}
\langle Q_X^{-1}1,1\rangle_{L^2(X)}=\langle A1,\chi_1\rangle_{L^2(X)}+\langle W_-1, (W_+)^*\chi_2\rangle_{L^2(\partial X\times[0,\infty))}+O(R^{-\infty}).
\end{equation}
The right hand side of the expression \eqref{aldnasodnaoj} is particularly tractable as both $W_-$ and $(W_+)^*$ preserve supports in $\partial X\times (-\infty,0]$. Writing out the right hand side \eqref{aldnasodnaoj} in terms of the homogeneous expansion of the symbol, we can partially integrate term by term in the asymptotic sum and arrive at the following computation for the asymptotics of $\langle Q_X^{-1}1,1\rangle_{L^2(X)}$.

\begin{cor}
\label{asumomoamda}
Let $X$ be a compact manifold with boundary and $\rd$ a distance function regular at the diagonal. We have that 
\begin{align*}
\langle Q_X^{-1}1,1\rangle_{L^2(X)}=&\sum_j\left(\int_X a_{0,j}(x,1)\rd x+\int_{\partial X} B_{j,\rd^2}(x)\rd x\right)R^{n+1-j},
\end{align*}
where $a_{0,j}$ are the local densities computed from a parametrix of $Q$ as in Lemma \ref{lasdnaodjna}, and $(B_{\rd^2,j})_{j>0}\subseteq C^\infty(\partial X)$ is the sequence of functions defined by
$$B_{\rd^2,j}(x'):=\sum_{\substack{j=|\beta|+\gamma_n+k+l\\\gamma_n>0}}\frac{i^{|\beta|+|\gamma_n|}(-1)^{|\beta|+1}}{\beta'!(\beta_n+\gamma_n)!}\partial_{x}^{\beta}w_{-,k}(x',0,0,1)\partial_{x_n}^{\gamma_n-1} \partial_\xi^{\beta+(0,\gamma_n)} w_{+,l}(x',0,0,1),$$
for $j>0$ and $B_0:=0$.
\end{cor}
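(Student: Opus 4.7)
The proof combines three inputs: the decomposition of $Q_X^{-1}$ from Theorem \ref{reoslsdlnadkn}, Lemma \ref{lasdnaodjna} for the interior contribution, and a careful analysis of the Wiener--Hopf factors near the boundary. After writing $Q_X^{-1} = \chi_1 A \tilde{\chi}_1 + \chi_2 W_-^\partial W_+^\partial \tilde{\chi}_2 + S$ with $S$ smoothing with parameter, the pairing $\langle Q_X^{-1} 1, 1\rangle_{L^2(X)}$ splits as
$$\langle A\tilde{\chi}_1, \chi_1\rangle_{L^2(X)} + \langle W_-^\partial W_+^\partial \tilde{\chi}_2, \chi_2\rangle_{L^2(\partial X \times [0,\infty))} + O(R^{-\infty}),$$
where the first pairing produces the interior densities $a_{0,j}$ and the second produces the boundary densities $B_{\rd^2,j}$.

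The interior term is handled directly by Lemma \ref{lasdnaodjna} applied to the pseudodifferential parametrix $A$ of order $n+1$: in each local chart one has $[A \cdot 1](x) \sim \sum_j a_{0,j}(x,R)$ with $a_{0,j}$ homogeneous of degree $n+1-j$ in $R$. Integrating against the cutoff produces the $\int_X a_{0,j}(x, 1)\rd x$ contribution at each order in $R$; the cutoff functions $\chi_1, \tilde\chi_1$ are invisible to the leading asymptotics because their deviation from $1$ is supported in the collar where the near-boundary piece of the parametrix takes over, as is standard in parametrix constructions.

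The boundary term is the heart of the computation. I would work in local collar coordinates $(x', x_n)$ with $\partial X = \{x_n = 0\}$ and $X = \{x_n \geq 0\}$ near the boundary. The operator $W_-^\partial W_+^\partial$ is a composition of pseudodifferential operators on $\partial X \times \R$ whose full symbols $w_\pm \sim \sum_k w_{\pm,k}$ extend holomorphically in $\xi_n$ to opposite half-planes by Proposition \ref{invertinthefactorso}. I would expand the product symbol via the Leibniz formula, apply the composition to the characteristic extension of the constant function $\tilde{\chi}_2$, and exploit the Paley--Wiener structure: because $w_+$ is holomorphic in $\mathrm{Im}\,\xi_n < 0$, the action of $W_+^\partial$ on $\mathbf{1}_{[0,\infty)}(x_n)$ collapses by contour integration to terms evaluated at $\xi_n = 0$, modulo exponentially small corrections. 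Taylor expanding the symbol of $W_-^\partial$ in $x_n$ around $x_n = 0$ and integrating by parts in $x_n$ then converts volume integrals into boundary integrals; each integration by parts produces one $\xi_n$-derivative of $w_{+,l}$ paired with one $x_n$-derivative of the prefactor from $w_{-,k}$. The index $\gamma_n > 0$ counts the number of integrations by parts in $x_n$, while $\beta = (\beta', \beta_n)$ tracks the $x$-derivatives coming from the symbol composition formula on $\partial X \times \R$.

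The main obstacle is the combinatorial bookkeeping required to assemble the claimed formula and to control the remainders at each truncation of the asymptotic symbol series, using the mixed-regularity calculus of \cite[Section 5]{gimpgofflouc}. The factors fit together as follows: $i^{|\beta|+|\gamma_n|}$ arises from Fourier transform conventions (each $\xi$-derivative from composition or contour integration produces an $i$); $(-1)^{|\beta|+1}$ arises from integration by parts (in $x$ for the composition formula and in $x_n$ for the boundary reduction); and $1/(\beta'!(\beta_n+\gamma_n)!)$ arises from Taylor expansion coefficients in $x'$ and in $x_n$ of the corresponding symbols. Homogeneity then produces the factor $R^{n+1-j}$ with $j = |\beta| + \gamma_n + k + l$. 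The constraint $\gamma_n > 0$ reflects that the purely interior (no-integration-by-parts) contribution is precisely what the interior densities $a_{0,j}$ already account for, and must be excluded from $B_{\rd^2, j}$ to avoid double counting.
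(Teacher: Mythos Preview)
Your proposal is essentially correct and follows the paper's approach: decompose $Q_X^{-1}$ via Theorem \ref{reoslsdlnadkn}, use Lemma \ref{lasdnaodjna} for the interior piece, and reduce the boundary piece to integrals over $\partial X$ by integration by parts in $x_n$ using the holomorphy of the Wiener--Hopf symbols.

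There is one tactical difference worth noting. You compute the boundary term by applying $W_+^\partial$ directly to $\mathbf{1}_{[0,\infty)}$ and invoking Paley--Wiener/contour integration. The paper instead passes $W_+^\partial$ across the pairing to its adjoint, writing the boundary contribution as $\langle W_-^\partial 1,(W_+^\partial)^*\chi_2\rangle_{L^2(\partial X\times[0,\infty))}$, and then exploits that \emph{both} $W_-^\partial$ and $(W_+^\partial)^*$ preserve supports in $\partial X\times(-\infty,0]$. This buys a cleaner application of Lemma \ref{lasdnaodjna}: each operator now acts on a genuinely smooth function (the constant $1$, respectively $\chi_2$), so the evaluation at $\xi=0$ is immediate and one avoids having to regularize $W_+^\partial$ applied to the non-smooth input $\mathbf{1}_{[0,\infty)}$. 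Your route can be made to work, but the distribution $W_+^\partial\mathbf{1}_{[0,\infty)}$ lies only in $\dot H^{-\mu}$ and the contour argument needs the mixed-regularity symbol estimates to control it; the adjoint trick sidesteps this. The combinatorics and the resulting formula for $B_{\rd^2,j}$ are the same either way.
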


From the local nature of the computations of asymptotics in Corollary \ref{asumomoamda}, we can conclude the following statement for the asymptotics of magnitude. The details in the computations can be found in \cite[Subsection 6.3]{gimpgofflouc}.

\begin{thm}
\label{technicalthmsec6}
Let $X$ be a compact $n$-dimensional manifold with boundary equipped with a distance function $\rd$ with property $(MR)$, e.g. if $\rd$ is regular at the diagonal and smooth off-diagonally. The magnitude function $\mathcal{M}_X$ admits an asymptotic expansion 
$$\mathcal{M}_X(R)\ =\ \frac{1}{n!\omega_n}\sum_{j=0}^\infty c_j(X,\rd) R^{n-j}+O(R^{-\infty}),$$
as $R\to \infty$ along the positive axis. Here $\omega_n$ denotes the volume of the unit ball in $\R^n$. The structure of the coefficients $c_j$ is as follows:
\begin{itemize}
\item $c_0(X,\rd)=\mathrm{vol}_n(X,g_\rd)$ computed in the Riemannian metric $g_\rd$ induced from the transversal Hessian of the distance function at the diagonal;
\item $c_1(X)=\mu\mathrm{vol}_{n-1}(\partial X,g_\rd)$ computed in the Riemannian metric induced from $g_\rd$;
\item $c_2(X)=\frac{n+1}{6}\int_X s_\rd \mathrm{d}V+\frac{\mu^2(n-1)}{2}\int_{\partial X} H_\rd\mathrm{d}S$, where $s_\rd$ is a scalar curvature like function defined from $\rd$ and $H_\rd$ is a mean curvature like term of the boundary.
\item For $j\geq 3$, we have that 
$$c_{j}(X,\rd)=n!\omega_n\int_X a_{j,0}(x,1)\rd x+n!\omega_n\int_{\partial X} B_{\rd^2,j}(x)\rd x,$$ 
where $a_{j,0}$ is a  polynomial (universally constructed from the Taylor expansion \eqref{taylor} of $\rd^2$ at the diagonal) in the covariant derivatives of the curvature of $X$ where the total degree is $\leq j$.
\end{itemize}
\end{thm}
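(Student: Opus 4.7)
The plan is to assemble the theorem from the structural results of Sections \ref{sec:meckes}--\ref{sec:boundary} and then isolate the low-order coefficients by an explicit symbolic computation. First, since $\rd$ satisfies property (MR), the variant of Theorem \ref{symbcorboundary} recalled just before Subsection \ref{wieninsubses} gives an invertible operator $\mathcal{Z}_X(R):\dot{H}^{-\mu}(X)\to \overline{H}^\mu(X)$ for $\mathrm{Re}(R)\gg 0$, and Corollary \ref{corofofrdlforx} identifies $\mathcal{M}_X(R)=R^{-1}\langle 1,\mathcal{Z}_X(R)^{-1}1\rangle_{L^2}$. As in Corollary \ref{penfpefnaon} (which extends to the (MR) setting by the remark following it), the off-diagonal remainder $\mathcal{Z}_X-Q_X$ contributes only $O(R^{-\infty})$, so it suffices to expand $R^{-1}\langle 1,Q_X(R)^{-1}1\rangle_{L^2}$.

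Next I would feed this into the factorization of $Q_X^{-1}$ provided by Theorem \ref{reoslsdlnadkn}, obtaining
\begin{equation*}
\langle Q_X(R)^{-1}1,1\rangle_{L^2(X)}=\langle A(R)1,\chi_1\rangle_{L^2(X)}+\langle W^\partial_-(R)1,(W^\partial_+(R))^*\chi_2\rangle_{L^2(\partial X\times[0,\infty))}+O(R^{-\infty}).
\end{equation*}
The first term is evaluated by Lemma \ref{lasdnaodjna} applied to the parametrix $A$ of $Q$ built in Proposition \ref{alksdnaksndasodn}, producing the interior densities $a_{j,0}(x,1)$ and the interior integrals $\int_X a_{j,0}(x,1)\,\rd x$. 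The second term is handled by Corollary \ref{asumomoamda}: one expands $w_{\pm}$ homogeneously, integrates by parts in $x_n$ using that $w_{-,k}$ (respectively $w_{+,l}$) extends holomorphically to the upper (respectively lower) half-plane so that all contour integrals are computed by residues at $x_n=0$, and collects terms of like homogeneity in $R$ to read off the boundary densities $B_{\rd^2,j}(x')$. Dividing by $R$ and relabeling $j\mapsto j-1$ yields the asserted expansion with coefficients
\begin{equation*}
c_j(X,\rd)=n!\omega_n\Bigl(\int_X a_{j,0}(x,1)\,\rd x+\int_{\partial X} B_{\rd^2,j}(x')\,\rd S(x')\Bigr).
\end{equation*}

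To identify $c_0, c_1, c_2$, I would evaluate the densities from the explicit principal and sub-principal data. The interior density $a_{0,0}(x,1)=\sigma_{-n-1}(Q)(x,0,1)/(n!\omega_n\cdot)$ contributes the volume term $c_0=\mathrm{vol}_n(X,g_\rd)$; by the parity argument for $q_j$ in Theorem \ref{conomkmlog}, $a_{1,0}$ vanishes after integration, so no interior contribution enters $c_1$. The boundary density $B_{\rd^2,1}$ is obtained from the leading Wiener-Hopf factors $w_{\pm,0}$ in Proposition \ref{invertinthefactorso}: the single residue at $\xi_n=h_+$ in a local geodesic normal frame for $g_\rd$ collapses to $\mu$ times the induced surface measure, giving $c_1=\mu\,\mathrm{vol}_{n-1}(\partial X,g_\rd)$. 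For $c_2$, the interior integrand $a_{2,0}(x,1)$ reduces, by the closed-manifold calculation of Theorem \ref{magcompsclosedeld}, to $\tfrac{n+1}{6\cdot n!\omega_n}s_\rd$; the boundary integrand $B_{\rd^2,2}$ comes from the first correction $w_{\pm,1}$ in Proposition \ref{invertinthefactorso} obtained from the partial fraction decomposition \eqref{ofomomfd}, and a direct computation of residues at $\xi_n=h_\pm$ in a boundary normal chart turns its leading behavior into the mean-curvature-like integrand, giving the stated formula for $c_2$.

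For $j\geq 3$ the algorithmic description is automatic: $a_{j,0}$ is built inductively via Proposition \ref{alksdnaksndasodn} from the Taylor coefficients $C^{(\gamma)}$ appearing in Theorem \ref{conomkmlog}, so it is a universal polynomial in covariant derivatives of the curvature of total degree $\leq j$ (the same invariant-theoretic reasoning as in the proof of Theorem \ref{compactthm}); $B_{\rd^2,j}$ is built inductively via the recursion in Proposition \ref{invertinthefactorso} from the same Taylor data together with the geometry of $\partial X$, yielding the analogous universal polynomial in the curvature of $X$ and the second fundamental form of $\partial X$. The main technical obstacle I expect is the $c_2$ computation: the Wiener-Hopf recursion for $w_{\pm,1}$ involves mixed-regularity symbols and partial fractions in $\xi_n$, and extracting the mean-curvature-like combination requires either working in boundary normal coordinates adapted to $g_\rd$ or invoking an invariance argument to reduce to a universal combination of local invariants whose coefficients are then fixed by evaluation on a test example (e.g.\ the half-space or a ball in Euclidean space, using the known Euclidean computations from \cite{gimpgoff,gimpgoff4th}).
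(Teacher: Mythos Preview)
Your proposal is correct and follows essentially the same approach as the paper: the paper itself derives Theorem \ref{technicalthmsec6} directly from Corollary \ref{asumomoamda} (which in turn rests on Theorem \ref{reoslsdlnadkn}, Lemma \ref{lasdnaodjna}, and Proposition \ref{invertinthefactorso}) and defers the explicit computation of the low-order coefficients $c_0,c_1,c_2$ to \cite[Subsection 6.3 and Appendix F]{gimpgofflouc}. Two minor remarks: dividing $\langle Q_X^{-1}1,1\rangle=\sum_j(\cdots)R^{n+1-j}$ by $R$ gives $R^{n-j}$ with no relabeling of $j$ needed, and your formula for $a_{0,0}(x,1)$ has a dangling ``$\cdot$'' but the intent is clear.
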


In the special case of smooth, compact domains $X\subseteq \R^n$, one readily computes that $a_{j,0}=0$ for $j>0$ (compare to \cite[Example 2.28]{gimpgofflouc}). We can in particular deduce the following simplified asymptotics for smooth, compact Euclidean domains. For the computation of $c_3(X)$ for odd dimensions, see \cite{gimpgoff4th}, and for $c_3(X)$ in dimension $n=2$ see Remark \ref{computersays}.

\begin{thm}
\label{Rntheorem}
Let $X\subseteq \R^n$ be a compact domain with smooth boundary equipped with Euclidean distance. Let $\mathcal{M}_X$ denote its magnitude function. Then there are constants $c_j(X)$, $j=0,1,2,\ldots$, such that 
$$\mathcal{M}_X(R)\ \sim \ \frac{1}{n!\omega_n}\sum_{j=0}^\infty c_j(X) R^{n-j},$$
as $R\to \infty$ along the positive axis. The structure of the coefficients $c_j$ is as follows:
\begin{itemize}
\item $c_0(X)=\mathrm{vol}_n(X)$ 
\item $c_1(X)=\mu\mathrm{vol}_{n-1}(\partial X)$
\item $c_2(X)=\frac{\mu^2(n-1)}{2}\int_{\partial X} H\mathrm{d}S$, where $H$ is  the mean curvature of the boundary.
\item If $n=2$ or $n$ is odd, $c_3(X)=\alpha_3(n)\int_{\partial X} H^2\mathrm{d}S$ for a dimensional constant $\alpha_3(n)>0$.
\item For $j\geq 3$, we have that 
$$c_{j}(X,\rd)=n!\omega_n\int_{\partial X} B_{\rd^2,j}(x)\rd x,$$ 
where a $B_{\rd^2,j}(x)$ is explicitly computable as in Corollary \ref{asumomoamda} and is given by a universal polynomial in covariant derivatives of  the second fundamental form of $\partial X$ of total degree $\leq j$.
\end{itemize}
\end{thm}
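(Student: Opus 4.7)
My plan is to deduce Theorem \ref{Rntheorem} from the general results already established for manifolds with boundary, specifically Theorem \ref{technicalthmsec6} and the explicit boundary formula in Corollary \ref{asumomoamda}, by exploiting the key simplification that Euclidean space is flat. The starting observation is that for $\rd(x,y)=|x-y|$ the Taylor expansion \eqref{talaldadladldaladlda} terminates after the quadratic term: in any Cartesian coordinate system one has $\rd(x,x-y)^2=H_{\rd^2}(x-y,x-y)=|x-y|^2$ exactly, so $C^{(j)}\equiv 0$ for all $j\geq 3$ and the induced Riemannian metric $g_\rd$ is the flat Euclidean metric.

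From this flatness, I would first show that the interior contribution to the magnitude vanishes to all orders beyond the leading one. Indeed, in Theorem \ref{conomkmlog} each lower-order symbol $q_j$ ($j\geq 1$) is a sum over multiindices $\gamma\in I_j$ of expressions involving $C^{(\gamma)}$, and since every $\gamma\in I_j$ has components $\geq 3$ all these terms vanish. Hence the full symbol of $Q$ reduces to its principal symbol $\sigma_{-n-1}(Q)(x,\xi,R)=n!\omega_n(R^2+|\xi|^2)^{-\mu}$, and the parametrix of $Q$ has symbol $a(x,\xi,R)=\frac{1}{n!\omega_n}(R^2+|\xi|^2)^{\mu}$. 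Applying Lemma \ref{lasdnaodjna} gives $a_{0,0}(x,1)=\frac{1}{n!\omega_n}$ and $a_{j,0}\equiv 0$ for $j\geq 1$. Consequently, by Theorem \ref{technicalthmsec6}, only $c_0(X)=\mathrm{vol}_n(X)$ receives an interior contribution, while $c_j(X,\rd)=n!\omega_n\int_{\partial X}B_{\rd^2,j}\,\rd x$ for $j\geq 1$.

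Next I would read off $c_1$ and $c_2$ directly from Theorem \ref{technicalthmsec6}: vanishing of the scalar curvature term $s_\rd$ (since the ambient metric is flat) leaves $c_1(X)=\mu\,\mathrm{vol}_{n-1}(\partial X)$ and $c_2(X)=\frac{\mu^2(n-1)}{2}\int_{\partial X}H\,\rd S$, where $H$ is the usual mean curvature of $\partial X\subset\R^n$ because $g_\rd$ is Euclidean and $H_\rd$ therefore coincides with the ordinary mean curvature. The statement about $c_3$ in dimensions $n=2$ and odd $n$ is then imported from the existing results: for odd $n$ it is the computation of \cite{gimpgoff4th}, and for $n=2$ it is recorded in Remark \ref{computersays} from the computer-algebra evaluation in Appendix \ref{appecode}; in both cases $\alpha_3(n)$ is a positive rational constant independent of $X$.

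Finally, I would argue the structural statement for $j\geq 3$. Corollary \ref{asumomoamda} expresses $B_{\rd^2,j}(x')$ as a polynomial in the symbols $w_{\pm,k}$ of the Wiener-Hopf factors from Proposition \ref{invertinthefactorso}, and by Theorem \ref{whopffaad} these $w_{\pm,k}$ are built inductively from the $q_j$ entering the symbol expansion of $Q^\partial$ on $\partial X\times[0,\infty)$. In boundary-normal coordinates for $\partial X\subset\R^n$, the only $x$-dependence of $\rd^2(x,y)$ arises from the embedding of $\partial X$, i.e.\ through its second fundamental form and its tangential derivatives. Because all intrinsic curvature terms vanish, a standard invariant-theoretic argument (as used for heat-kernel invariants) shows that $B_{\rd^2,j}$ must be expressible as a universal polynomial in covariant derivatives of the second fundamental form of total degree at most $j$, establishing the final bullet. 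The main obstacle in this plan is keeping track of the homogeneity bookkeeping in the Wiener-Hopf recursion to verify the stated degree bound and the vanishing of intrinsic curvature contributions; however, this is essentially a direct consequence of the flatness of Euclidean space combined with the partial-fraction construction of $q_{\pm,j}$ in \eqref{ofomomfd}.
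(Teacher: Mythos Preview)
Your proposal is correct and follows essentially the same approach as the paper: specialize Theorem \ref{technicalthmsec6} to Euclidean domains, use the vanishing of all $C^{(j)}$ in Cartesian coordinates to conclude $a_{j,0}=0$ for $j>0$ so that only boundary integrals survive for $j\geq 1$, read off $c_0,c_1,c_2$ from the general formulas with $s_\rd=0$, and import $c_3$ from \cite{gimpgoff4th} (odd $n$) and Remark \ref{computersays} ($n=2$). The paper merely records this as a one-paragraph remark before the theorem statement, whereas you have spelled out the logic more explicitly; your invariant-theoretic justification of the structural claim for $j\geq 3$ is likewise in line with how the paper handles the analogous statement in Theorem \ref{technicalthmsec6}.
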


\section{Some remarks about finer structures}\label{sec:finestructure}

\subsection{Localization of solutions to $A1$ and boundary terms}
\label{boundarlocalsos}

Let us provide an immediate consequence of Lemma \ref{lasdnaodjna} that highlights how the solution to the equation $R\mathcal{Z}(R)U=1$ inside a manifold can be described explicitly by means of the symbol of the parametrix $A$. An interesting situation to keep in mind is when $M=X^\circ$ is the interior of a manifold with boundary. 

\begin{cor}
\label{conseqoninterior}
Let $M$ be an $n$-dimensional manifold equipped with a distance function $\rd$ whose square is smooth and $\Gamma\subseteq \C_+$ a sector with positive angle to $i\R$. Write $(a_{j,0})_{j\in \N}\subseteq C^\infty(M\times \C_+)$ for the sequence defined as in Lemma \ref{lasdnaodjna} from $Q^{-1}\in \Psi^{n+1}_{\rm cl}(M;\C_+)$.

Assume that $(u_R)_{R\in \Gamma}\subseteq \mathcal{D}'(M)$ is a family of weak solutions to 
$$\int_M \e^{-R\rd(x,y)}u_R(y)\rd y=1, \quad x\in M.$$
Then $(u_R)_{R\in \Gamma}\subseteq C^\infty(M)$ and for any $N\in \N$ and any compact $K\subseteq M$, there is a $C>0$ such that 
$$\left|u_R(x)-\sum_{j=0}^{n+N} a_{j,0}(x,1)R^{n-j}\right|\leq C(1+|R|)^{-N}, \quad\mbox{for all $x\in K$}.$$
\end{cor}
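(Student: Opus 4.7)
The plan is to leverage the pseudodifferential framework of Sections \ref{Mclosed}--\ref{sec:boundary} and reduce the corollary to Lemma \ref{lasdnaodjna}. First, split $\mathcal{Z}(R)=Q(R)+L(R)$ as in Subsection \ref{wieninsubses}, where $Q(R)\in \Psi^{-n-1}_{\rm cl}(M;\C_+)$ is the near-diagonal localization from \eqref{arforgo} and $L(R)$ has integral kernel $\frac{1}{R}(1-\chi(x,y))\e^{-R\rd(x,y)}$. Since $\rd^2$ is smooth, $\rd$ itself is smooth off the diagonal, so this kernel lies in $C^\infty(M\times M)$; because $\Gamma$ has positive angle to $i\R$ one has $|\e^{-R\rd(x,y)}|\leq \e^{-c|R|}$ uniformly on any compact subset of $(M\times M)\setminus \mathrm{Diag}_M$, so $L(R)\in \Psi^{-\infty}(M;\C_+)$ as a smoothing operator with parameter. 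The hypothesis rewrites as
\[Q(R)u_R=R^{-1}\mathbf{1}-L(R)u_R.\]
Since $L(R)u_R\in C^\infty(M)$, elliptic regularity for the elliptic-with-parameter operator $Q(R)$ (Theorem \ref{conomkmlog}) yields $u_R\in C^\infty(M)$, establishing the first claim.

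For the asymptotic expansion, construct a properly supported left parametrix $A(R)\in \Psi^{n+1}_{\rm cl}(M;\C_+)$ of $Q(R)$ by the iterative symbol scheme of Proposition \ref{alksdnaksndasodn}, so that $A(R)Q(R)=I+S'(R)$ with $S'(R)\in \Psi^{-\infty}(M;\C_+)$. Applying $A(R)$ to the previous identity,
\[u_R\;=\;R^{-1}A(R)\mathbf{1}\,-\,A(R)L(R)u_R\,-\,S'(R)u_R.\]
Since $Q(R)^{-1}-A(R)\in\Psi^{-\infty}(M;\C_+)$ wherever both are defined, the sequence $(a_{j,0})$ in the statement coincides with the one produced from $A(R)$ via restriction of its full symbol to $\xi=0$, and Lemma \ref{lasdnaodjna} then gives
\[R^{-1}[A(R)\mathbf{1}](x)\;=\;\sum_{j=0}^{n+N}a_{j,0}(x,1)R^{n-j}\,+\,O((1+|R|)^{-N})\]
uniformly for $x$ in any compact $K\subseteq M$. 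It remains to show that the remainder $E_R:=A(R)L(R)u_R+S'(R)u_R$ is $O((1+|R|)^{-\infty})$ in $C^\infty_{\text{loc}}(M)$.

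The main obstacle is that $E_R$ is obtained by applying smoothing-with-parameter operators to the family $u_R$, whose Sobolev norms could a priori depend on $R$ in an uncontrolled way. To handle this I would prove a polynomial a priori bound via a localized G\aa rding-type inequality: pick $\psi\in C^\infty_c(M)$ equal to $1$ on a neighborhood of $K$; by ellipticity with parameter of $Q(R)$, for $|R|$ sufficiently large in $\Gamma$,
\[\|\psi u_R\|_{H^{-\mu}}\;\lesssim\;\|Q(R)(\psi u_R)\|_{H^{\mu}}\,+\,(1+|R|)^{-1}\|\widetilde{\psi}u_R\|_{H^{-\mu-1}}\]
for a slightly larger cutoff $\widetilde{\psi}$. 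Combining with $\psi Q(R)u_R=R^{-1}\psi-\psi L(R)u_R$ and controlling the commutator $[Q(R),\psi]\in \Psi^{-n-2}_{\rm cl}(M;\C_+)$, a finite bootstrap yields $\|\psi u_R\|_{H^{-\mu}}=O(|R|^{a})$ for some finite $a$ depending on $\psi$. Since the kernels of both $A(R)L(R)$ and $S'(R)$ are smooth with $C^\infty$-seminorms that are $O((1+|R|)^{-\infty})$ uniformly on compact subsets of $M\times M$, the polynomial bound on $\|\psi u_R\|_{H^{-\mu}}$ forces $E_R=O((1+|R|)^{-\infty})$ uniformly on $K$. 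The desired estimate follows by combining this with the expansion of $R^{-1}A(R)\mathbf{1}$.
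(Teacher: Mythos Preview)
Your proposal is correct and follows the route the paper has in mind; the paper does not give a separate proof but simply presents the corollary as an ``immediate consequence of Lemma \ref{lasdnaodjna}'', and your decomposition $\mathcal{Z}=Q+L$, elliptic regularity for $Q$, application of the parametrix $A$ to write $u_R=R^{-1}A(R)\mathbf{1}+E_R$, and invocation of Lemma \ref{lasdnaodjna} for the expansion of $R^{-1}A(R)\mathbf{1}$ are exactly the intended steps.

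One caveat on the step you flag as the ``main obstacle'': your localized bootstrap for the polynomial a priori bound on $u_R$ does not close on its own for non-compact $M$, because the term $\psi L(R)u_R$ depends on $u_R$ outside any fixed compact set (the kernel of $L$ is not properly supported). In the cases the paper actually uses---$M$ compact, where Corollary \ref{penfpefnaon} gives $u_R=R^{-1}\mathcal{Z}(R)^{-1}1$ with $\|u_R\|_{H^{-\mu}}$ polynomially bounded directly from invertibility, or $M=X^\circ$ with $u_R\in\dot H^{-\mu}(X)$ controlled by Theorem \ref{symbcorboundary}---the needed bound is already in hand and no bootstrap is required. Note also that since $\rd^2$ is assumed globally smooth, on a compact $M$ you may simply take $\chi\equiv 1$, so $L=0$ and the non-local obstruction disappears. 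For the bare statement on a genuinely non-compact $M$, even making sense of the hypothesis $\int_M e^{-R\rd(x,y)}u_R(y)\,\rd y=1$ already requires an implicit assumption on $u_R$ (such as compact support), and that same assumption supplies what your bootstrap needs.
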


\begin{remark}
In the special case that $M=X^\circ$ is the interior of a compact domain in $\R^n$ and $\rd$ is the Euclidean distance then for any $N\in \N$ and any compact $K\subseteq M$, there is a $C>0$ such that 
$$\left|u_R(x)-\frac{R^n}{n!\omega_n}\right|\leq C(1+|R|)^{-N}, \quad\mbox{for all $x\in K$}.$$

We note that for $n$ being odd, the results of \cite{meckes} show that $u_R=\frac{1}{n!\omega_n}(R^2-\Delta)^{n+1}h_R$ for a uniquely determined $h_R\in H^{(n+1)/2}(\R^n)$ solving $(R^2-\Delta)^{n+1}h_R=0$ in $\R^n\setminus X$ and satisfying $h_R=1$ in $X^\circ$. Therefore 
$$u_R(x)=\frac{R^n}{n!\omega_n}\quad\mbox{for all $x\in X^\circ$}.$$
Since $u_R\in H^{-(n+1)/2}(\R^n)$ is supported in $X$, $u_R-\frac{R^n}{n!\omega_n}$ is supported on $\partial X$. If $n$ is even, it is not clear if $u_R-\frac{R^n}{n!\omega_n}$ is supported on $\partial X$. However, by our result $u_R-\frac{R^n}{n!\omega_n}$ tends to $0$ away from $\partial X$ faster than any polynomial as $R \to \infty$. This generalizes \cite[Theorem 5]{bunch2} to even dimensions, as relevant for a boundary detection method in data science applications.  
\end{remark}

\subsection{Taylor expansion at $R=0$}
\label{tarolmlnad}

In light of Corollary \ref{corofofrdlforx}, it is of interest to study series expansions of solutions to $R\mathcal{Z}(R)u_R=1$ near $R=0$. Indeed, meromorphicity ensures that a Taylor expansion of the magnitude function at $R=0$ determines the magnitude function completely. Recent results of Meckes \cite{meckes20} compute the derivative of the magnitude function of a convex body in $\R^n$ at $R=0$ in terms of an intrinsic volume. The following result provides a partial description of the Taylor expansion at $R=0$ under a constraint on the distance function similar to property (MR).

\begin{thm}
\label{taylorarororofoz}
Let $X$ be a compact manifold with $C^0$-boundary with a distance function $\rd$ such that $\rd^2$ is regular at the diagonal and that $L(R)$ defines a holomorphic family of operators
$$L(R):\dot{H}^{-\mu}(X)\to \overline{H}^\mu(X),$$
for $R$ in a punctured neighborhood of $R=0$, for instance if $\rd^2$ is smooth. Assume furthermore that the operator 
$$\mathcal{Z}_1:\dot{H}^{-\mu}(X)\to \overline{H}^\mu(X), \quad \mathcal{Z}_1u(x):=\int_X \rd(x,y)u(y)\rd y$$
is invertible. Then there is a unique holomorphic family $(u_R)_{|R|\leq \delta^{-1}}\subseteq \dot{H}^{-\mu}(X)$, for some $\delta>0$, which solves the equation 
$$R\mathcal{Z}(R)u_R=1.$$
The holomorphic family $(u_R)_R$ admits an expansion at $R=0$ as an absolutely norm-convergent power series $u_R=\sum_{k=0}^\infty u_kR^k\in \dot{H}^{-\mu}(X)$. The coefficients $(u_k)_k\subseteq \dot{H}^{-\mu}(X)$ are constructed from
$$u_0:=\lambda_1g, \quad\mbox{where}\quad g:=\mathcal{Z}_1^{-1}(1)\quad\mbox{and}\quad\lambda_1:=\frac{1}{\int_X g\rd x},$$
defining $u_{k+1}$ and an auxiliary parameter $\lambda_{k+2}$ from $u_0,\ldots,u_k$ and $\lambda_{k+1}$ as
$$u_{k+1}=\sum_{l=0}^{k}(-1)^{k-l} \mathcal{Z}_1^{-1}\mathcal{Z}_{k+2-l}u_l+\lambda_{k+2}g, \quad\mbox{and}\quad \lambda_{k+2}=\frac{\lambda_{k+1}-\sum_{l=0}^{k}(-1)^{k-l} \int_X \mathcal{Z}_1^{-1}\mathcal{Z}_{k+2-l}u_l\rd x}{\int_M \mathcal{Z}_1^{-1}(1)\rd x}$$
where $\mathcal{Z}_ku(x):=\frac{1}{k!}\int_X \rd(x,y)^ku(y)\rd y$, $k\geq 0$.
\end{thm}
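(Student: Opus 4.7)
My plan is to convert the equation $R\mathcal{Z}(R)u_R = 1$ into a power-series identity in $R$ centered at $R=0$, extract a closed form for $u_R$ from the structure of the leading singular term, and then read off the recursion by comparing Taylor coefficients.

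First, expanding the exponential in the kernel of $\mathcal{Z}(R)$ gives $R\mathcal{Z}(R) = \sum_{k\geq 0}(-R)^k \mathcal{Z}_k$. I would begin by verifying that this series converges in the operator norm topology of $\mathbb{B}(\dot{H}^{-\mu}(X), \overline{H}^\mu(X))$ on a full neighborhood of $R=0$. The kernel estimate $\|\rd^k\|_\infty \leq \mathrm{diam}(X)^k$ reduces this to bounding $\mathcal{Z}_k$ between these Sobolev spaces; the assumption that $L(R)$ is a holomorphic family in a punctured neighborhood of $R=0$ (combined with smoothness of $\rd^2$ near the diagonal so that $Q(R)$ has the properties of Theorem \ref{conomkmlog}) controls the off-diagonal contributions where $\rd$ itself may fail to be smooth. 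This is the most delicate technical point, even though it is quantitatively routine.

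Next, I would isolate the rank-one obstruction at $R=0$. Since $\mathcal{Z}_0 u = \langle u, 1\rangle \cdot 1$ has one-dimensional range, the operator $R\mathcal{Z}(R)$ fails to be invertible at $R=0$, but it writes as $\mathcal{Z}_0 + R B(R)$ with $B(R) := -\sum_{k\geq 1}(-R)^{k-1}\mathcal{Z}_k$ and $B(0) = -\mathcal{Z}_1$ invertible by hypothesis. A Neumann series argument then makes $B(R)$ invertible on some disk $|R|<\delta_0$. Setting $v(R) := B(R)^{-1}(1)$, I would apply $B(R)^{-1}$ to the equation and use $B(R)^{-1}\mathcal{Z}_0 u = \langle u, 1\rangle v(R)$ to reduce it to $\langle u_R, 1\rangle v(R) + R u_R = v(R)$. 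This forces $u_R$ to be proportional to $v(R)$, and pairing with $1$ yields a scalar identity whose solution is $c_R := \langle u_R, 1\rangle = \phi(R)/(R + \phi(R))$, where $\phi(R) := \langle v(R), 1\rangle$. Since $\phi(0) = -\int_X g\,\rd x \neq 0$ — precisely the condition ensuring $\lambda_1$ is defined — the closed form
$$u_R = \frac{v(R)}{R + \phi(R)}$$
is holomorphic in a disk about the origin and is uniquely determined.

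Finally, to match the explicit recursion in the theorem, I would substitute $u_R = \sum_k u_k R^k$ into $\sum_k (-R)^k \mathcal{Z}_k u_R = 1$ and collect powers of $R$. The $R^0$ relation gives $\langle u_0, 1\rangle = 1$; the $R^{m+1}$ relation rearranges, using the invertibility of $\mathcal{Z}_1$, to express $u_m$ in terms of $u_0, \ldots, u_{m-1}$ plus a free scalar $\lambda_{m+1} := \langle u_{m+1}, 1\rangle$ multiplied by $g = \mathcal{Z}_1^{-1}(1)$. Fixing $\lambda_{m+1}$ through the compatibility equation obtained by integrating the formula for $u_m$ against $1$ and equating to the previously determined $\lambda_m$ yields exactly the displayed recursion; the base case $m=0$ produces $u_0 = \lambda_1 g$ with $\lambda_1 = 1/\int_X g\,\rd x$. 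Convergence of the reconstructed power series follows from the closed form above, so no separate analytic estimate on the sequence $(u_k)$ is needed. The main obstacle I anticipate is the first step, namely the justification that $\mathcal{Z}_k$ maps $\dot{H}^{-\mu}(X)\to\overline{H}^\mu(X)$ with norms summable against the geometric factor from $\rd$, given that global smoothness of $\rd$ itself is not assumed.
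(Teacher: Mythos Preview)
Your argument is correct, and the core of it---the closed-form solution $u_R = v(R)/(R+\phi(R))$ obtained by exploiting the rank-one structure of $\mathcal{Z}_0$---is a genuinely different and somewhat sharper approach than the paper's. The paper proceeds by writing the formal ansatz $u_R = \sum_k u_k R^k$, matching coefficients in $R\mathcal{Z}(R)u_R = 1$ to obtain the recursion directly, and then asserting convergence of the resulting series from convergence of the operator expansion $R\mathcal{Z}(R) = \sum_k (-R)^k \mathcal{Z}_k$; uniqueness is not addressed explicitly. Your route instead first produces the solution in closed form (from which holomorphicity and uniqueness are immediate), and only afterwards reads off the recursion by Taylor-coefficient matching. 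This buys you a clean convergence argument and a transparent reason why $\int_X g\,\rd x \neq 0$ is needed, whereas the paper's convergence step is terse. Conversely, the paper's decomposition $\mathcal{Z}_k = Q_k + L_k$ (with $Q_k$ pseudodifferential of order $-n-k$ for odd $k$, smoothing for even $k$, and Calder\'on--Vaillancourt for the norm bounds) gives a concrete mechanism for the boundedness of $\mathcal{Z}_k$ between the Sobolev spaces that you correctly flag as the main technical obstacle; you would want to import that argument to close the gap you identified in your first step.
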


\begin{proof}
We can decompose 
$$\mathcal{Z}_k=Q_k+L_k,$$
where $Q_k(R)u(x)=\frac{1}{k!} \int_X \chi(x,y)\rd(x,y)^ku(y)\rd y$ is an elliptic pseudodifferential operator of order $-n-k$ for odd $k$ and smoothing for even $k$ by a similar argument as in Theorem \ref{conomkmlog}. We introduce the notation $L_0=0$ and 
$$\mathcal{Z}_0u(x)=Q_0u(x):=\int_X u(y)\rd y,$$
for the projection onto the constant function, which is a smoothing operator.
Moreover, we can write 
$$R\mathcal{Z}(R)=\sum_{k=0}^\infty (-1)^kR^k\mathcal{Z}_{k},$$
and 
$$RQ(R)=\sum_{k=0}^\infty (-1)^kR^kQ_{k},\quad \mbox{and}\quad RL(R)=\sum_{k=0}^\infty (-1)^kR^kL_{k}.$$
A short argument bounding the norm on $Q_k$ with Calderon-Vaillancourt's theorem shows that the expansion for $RQ(R)$ is an absolutely convergent Taylor expansion on a neighborhood of $R=0$. Since we have assumed that $L(R)$ is a holomorphic function, the expansion for $RL(R)$ and $R\mathcal{Z}(R)$ are absolutely convergent Taylor expansions in a neighborhood of $R=0$.

We search for an absolutely norm-convergent power series solving $R\mathcal{Z}(R)u_R=1$. We make the formal ansatz $u_R=\sum_{k=0}^\infty u_kR^k$ in which $R\mathcal{Z}(R)u_R=1$ is equivalent to $L_0u_0=1$ and
\begin{equation}
\label{equaosoform}
\sum_{l=0}^k(-1)^{k-l} \mathcal{Z}_{k-l}u_l=0, \quad k>0.
\end{equation}
Note that the range of $\mathcal{Z}_0$ consists only of constants. We write $\lambda_k:=\mathcal{Z}_0u_k$, note that $\lambda_0=1$. We set $g:=\mathcal{Z}_1^{-1}(1)$.  We can rewrite Equation \eqref{equaosoform}, using that $\mathcal{Z}_1$ is invertible, as
$$u_{k-1}=\sum_{l=0}^{k-2}(-1)^{k-l} \mathcal{Z}_1^{-1}\mathcal{Z}_{k-l}u_l+\lambda_kg.$$
Therefore, $u_k$ can be determined inductively as follows. We find $u_0$ as $u_0=\lambda_1 g$, and then we determine $\lambda_1$ from $\lambda_1\mathcal{Z}_0(g)=1$. The induction step is obtained as follows: say we have determined $u_0,\ldots, u_k$ and $\lambda_0,\ldots, \lambda_{k+1}$ we then define $u_{k+1}$ as 
$$u_{k+1}=\sum_{l=0}^{k}(-1)^{k-l} \mathcal{Z}_1^{-1}\mathcal{Z}_{k+2-l}u_l+\lambda_{k+2}g,$$
where $\lambda_{k+2}$ is defined from $\lambda_{k+2}\mathcal{Z}_0(g)=\lambda_{k+1}-\sum_{l=0}^{k}(-1)^{k-l} \mathcal{Z}_0\mathcal{Z}_1^{-1}\mathcal{Z}_{k+2-l}u_l$.The fact that the Taylor expansion of $R\mathcal{Z}(R)$ converges implies that the series $u_R=\sum_{k=0}^\infty u_kR^k$ is absolutely norm convergent in a neighborhood of $R=0$.
\end{proof}

\begin{cor}
Under the assumptions of Theorem \ref{taylorarororofoz}, $\mathcal{M}_X(R)$ is holomorphic at $R=0$ where it admits the Taylor series
$$\mathcal{M}_X(R)=1+\sum_{k=1}^\infty \lambda_k R^k,$$
for the sequence $(\lambda_k)_{k>0}$ constructed in Theorem \ref{taylorarororofoz}.
\end{cor}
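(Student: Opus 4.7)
The plan is to combine the explicit representation of the magnitude function from Corollary \ref{corofofrdlforx} with the power-series expansion of the weight distribution $u_R$ constructed in Theorem \ref{taylorarororofoz}. For $R>R_0$ sufficiently large, Corollary \ref{corofofrdlforx} gives $\mathcal{M}_X(R)=R^{-1}\langle 1,\mathcal{Z}_X(R)^{-1}1\rangle_{L^2}$, so the unique distribution $u_R\in\dot{H}^{-\mu}(X)$ solving $R\mathcal{Z}(R)u_R=1$ satisfies $u_R=R^{-1}\mathcal{Z}_X(R)^{-1}1$, and therefore
\[
\mathcal{M}_X(R)=\langle 1,u_R\rangle_{L^2}\qquad\text{for all }R>R_0.
\]
By uniqueness of holomorphic extensions, this identity will then propagate wherever both sides make sense.

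Next, I would invoke Theorem \ref{taylorarororofoz} to write $u_R=\sum_{k=0}^\infty u_k R^k$ as an absolutely norm-convergent power series in $\dot{H}^{-\mu}(X)$ on a disk $\{|R|<\delta^{-1}\}$. Because the duality pairing between $\dot{H}^{-\mu}(X)$ and $\overline{H}^{\mu}(X)$ is continuous, and since $1\in\overline{H}^{\mu}(X)$, one may interchange the pairing with the infinite sum to obtain
\[
\langle 1,u_R\rangle_{L^2}=\sum_{k=0}^\infty \langle 1,u_k\rangle_{L^2}\, R^k.
\]
By definition, $\langle 1,u_k\rangle_{L^2}=\int_X u_k\,\rd x=\mathcal{Z}_0 u_k=\lambda_k$ in the notation of the proof of Theorem \ref{taylorarororofoz}. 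The value $\lambda_0=1$ is immediate from the construction: $u_0=\lambda_1 g$ with $g=\mathcal{Z}_1^{-1}(1)$ and $\lambda_1=\bigl(\int_X g\,\rd x\bigr)^{-1}$, so $\lambda_0=\lambda_1\int_X g\,\rd x=1$.

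Combining the two displayed identities, we have $\mathcal{M}_X(R)=1+\sum_{k=1}^\infty \lambda_k R^k$ on the overlap $\{R>R_0\}\cap\{|R|<\delta^{-1}\}$. The right-hand side defines a holomorphic function on $\{|R|<\delta^{-1}\}$, and the identity theorem for holomorphic (resp.\ meromorphic) functions forces the equality to hold throughout the disk of convergence; in particular, $\mathcal{M}_X$ extends holomorphically through $R=0$ with the asserted Taylor expansion. The only minor technical point to verify carefully is the exchange of the pairing and the infinite series, which follows from absolute norm convergence of $u_R$ in $\dot{H}^{-\mu}(X)$ together with boundedness of the functional $v\mapsto\langle 1,v\rangle_{L^2}$; there is no substantial obstacle here.
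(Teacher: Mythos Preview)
Your overall strategy is the natural one and matches how the paper intends the corollary to follow: pair the holomorphic family $u_R$ from Theorem \ref{taylorarororofoz} against the constant function $1$, identify $\langle 1,u_k\rangle_{L^2}=\mathcal{Z}_0 u_k=\lambda_k$, and conclude. However, there is a genuine gap in your bridging argument.

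You match the two representations of $\mathcal{M}_X$ on the set $\{R>R_0\}\cap\{|R|<\delta^{-1}\}$ and then invoke the identity theorem. But nothing guarantees this intersection is nonempty: $R_0$ arises from the coercivity estimate in Theorem \ref{symbcorboundary} and may be large, while $\delta^{-1}$ is the radius of convergence of the power series in Theorem \ref{taylorarororofoz} and may be small. Your sentence ``by uniqueness of holomorphic extensions, this identity will then propagate wherever both sides make sense'' does not repair this, because you have not yet established the identity on any common open set.

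The correct bridge is to work directly near $R=0$ via the meromorphic extension. Under property (SMR) (which holds in the model case $\rd^2$ smooth, as noted in the hypotheses of Theorem \ref{taylorarororofoz}), Corollary \ref{corofofrdlforx} yields that $R\mapsto \mathcal{Z}_X(R)\in\mathbb{B}(\dot H^{-\mu}(X),\overline H^{\mu}(X))$ is a holomorphic Fredholm family on a sector $\Gamma$ containing a punctured neighborhood of $0$, and $\mathcal{M}_X(R)=R^{-1}\langle 1,\mathcal{Z}_X(R)^{-1}1\rangle_{L^2}$ meromorphically there. On this punctured neighborhood, away from the discrete set of poles of $\mathcal{Z}_X(R)^{-1}$, the holomorphic family $u_R$ from Theorem \ref{taylorarororofoz} satisfies $R\mathcal{Z}_X(R)u_R=1$ and hence coincides with $R^{-1}\mathcal{Z}_X(R)^{-1}1$ by invertibility of $\mathcal{Z}_X(R)$. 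Thus $\mathcal{M}_X(R)=\langle 1,u_R\rangle_{L^2}$ already holds on a punctured neighborhood of $R=0$, without ever going out to $R>R_0$. Since $\langle 1,u_R\rangle_{L^2}=\sum_{k\ge 0}\lambda_k R^k$ is holomorphic across $R=0$, the singularity of $\mathcal{M}_X$ at $0$ is removable and the claimed Taylor expansion follows.

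A secondary point: the hypotheses of Theorem \ref{taylorarororofoz} as stated do not literally include property (MR) or (SMR), only that $L(R)$ is holomorphic near $R=0$; so your appeal to Corollary \ref{corofofrdlforx} implicitly adds an assumption. This is harmless in the intended setting (e.g.\ $\rd^2$ smooth), but you should flag it.
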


\begin{remark}
In Theorem \ref{taylorarororofoz}, the result relies on the right hand side of the equation $R\mathcal{Z}(R)u=1$ being the constant function. Indeed, since $R\mathcal{Z}(R)$ evaluates to the projection onto the constant functions at $R=0$, there is no holomorphic family of solutions to $R\mathcal{Z}(R)u=f$ unless $f$ is constant.
\end{remark}

\begin{problem}
For a compact domain $X\subseteq \R^n$, does it hold true that the operator 
$$\mathcal{Z}_1:\dot{H}^{-\mu}(X)\to \overline{H}^\mu(X), \quad \mathcal{Z}_1u(x):=\int_X |x-y|u(y)\rd y,$$
is invertible? Standard techniques with G\aa rding inequalities show that $\mathcal{Z}_1$ is a Fredholm operator with vanishing index. Indeed, we can write 
$$\langle \mathcal{Z}_1 u,u\rangle_{L^2(X)}=\int_{\R^n} g(\xi)|\hat{u}(\xi)|^2\rd \xi, \quad u\in \dot{H}^{-\mu}(X),$$
where $g$ is the Fourier transform of $x\mapsto |x|$. By \cite[Proposition A.1]{gimpgofflouc}, and homogeneity arguments, we have that 
$$g(\xi)=-\pi^{\frac{n-1}{2}}2^n \Gamma\left(\frac{n+1}{2}\right) \mathrm{F.P.}|\xi|^{-n-1}.$$
Therefore this problem could be susceptible to explicit quadratic form estimates. Were the problem to have a positive solution for the unit ball $X=B_n$ for odd $n$, \cite[Theorem 4]{meckes20} implies that 
\begin{equation}
\label{z1mamfaoda}
\int_{X} \mathcal{Z}_1^{-1}1\rd x=\frac{2}{V_1(X)},
\end{equation}
where $V_1(X)$ denotes the first intrinsic volume of $X=B_n$. The claim in \cite[Conjecture 5]{meckes20} is that the identity \eqref{z1mamfaoda} holds for all compact convex domains $X$.
\end{problem}

\begin{appendix}

\section{Algorithm for computing $c_j(X,\rd)$}
\label{appendixA}

We recall the expansion of $\mathcal{M}_X(R)$ from preceding chapters, given by
$$\mathcal{M}_X(R)=\sum_{k=0}^\infty c_{k}(X,\rd)R^{n+1-k}+O(R^{-\infty}),$$
provided $(X,\rd)$ satisfies the relevant assumptions. The approach in this article leads to an algorithmic procedure for computing the coefficients $c_k$, which we summarize here and detail in special geometric situations. Each stage of this procedure has been already presented and proved in \cite{gimpgofflouc}, and a pseudocode may be found in Appendix \ref{appendixB}. 

For the convenience of the reader, we repeat some preliminary computations and definitions:
$$I_j:=\{\gamma\in \cup_{k=1}^\infty \N^k_{\geq 3}: |\gamma|=j+2k\}, \, \mathrm{rk}(\gamma) = k \text{ for }\gamma \in \N^k$$
$$\mathfrak{c}_{k,n}:= 
\begin{cases}
(-1)^k(n-2k)!\omega_{n-2k}\omega_{2k}, \; &\mbox{for $2k- n-1<0$}\\
\frac{(-1)^{1-n/2} \omega_{2k}}{(2k-n)!\omega_{2k-n}}, \; & \mbox{for $2k-n-1\in 2\N+1$}\\
\frac{(-1)^{\frac{n+1}{2}}}{(2\pi)^{2k-n}}\omega_{2k}\omega_{2k-n-1} , \; &\mbox{for $2k- n-1\in 2\N$}
\end{cases},$$
$$q_{k,p}(x,R).v:=\sum_{|\alpha|=p}\partial_\xi^\alpha q_k(x,\xi,R)v^\alpha|_{\xi=0}=\partial_t^pq_k(x,tv,R)|_{t=0};$$
By $C^{(\gamma)}(x,-D_\xi)$, $\gamma\in I_j$, we denote the $|\gamma|$-th order differential operator arising from the Taylor expansion of $\rd^2:M\times M\to [0,\infty]$ (see \eqref{taylor}) and write $g_\rd$ for the dual Riemannian metric to $H_{\rd^2}$.
Using these definitions, we calculate $c_k$ in the following steps. 
\begin{enumerate}
\item \textbf{Calculate terms $q_j$ in the expansion of $q$ for $j \in [0,k]$}\label{step:q_j}\\
$q$ denotes the symbol of the operator $Q_{G,\chi}$ and it has an asymptotic expansion $q \sim \sum_j q_j$, see Theorem \ref{conomkmlog}. This is the operator we use to establish results corresponding to the interior of $X$. 

The first term is given by
$$q_0(x,\xi,R) =(R^2+g_\rd(\xi,\xi))^{-(n+1)/2} ,$$
and for $j>0$ and $n$ odd we have that 
\smaller
\begin{align*}
q_j(x,\xi,R)=&\sum_{\gamma\in I_j, \mathrm{rk}(\gamma)<(n+1)/2} \mathfrak{c}_{\mathrm{rk}(\gamma),n}C^{(\gamma)}(x,-D_\xi)(R^2+g_G(\xi,\xi))^{-(n+1)/2+\mathrm{rk}(\gamma)}+\\
&{ -}\sum_{\gamma\in I_j, \mathrm{rk}(\gamma)\geq (n+1)/2} \mathfrak{c}_{\mathrm{rk}(\gamma),n}C^{(\gamma)}(x,-D_\xi)\\
&\qquad \qquad \qquad \left[(R^2+g_\rd(\xi,\xi))^{-(n+1)/2+\mathrm{rk}(\gamma)}\log(R^2+g_G(\xi,\xi))\right],
\end{align*}
\normalsize
and for $n$ even
\begin{align*}
q_j(x,\xi,R)=&\sum_{\gamma\in I_j} \mathfrak{c}_{\mathrm{rk}(\gamma),n}C^{(\gamma)}(x,-D_\xi)(R^2+g_\rd(\xi,\xi))^{-(n+1)/2+\mathrm{rk}(\gamma)}.
\end{align*}

When $M\subseteq \R^N$ is an $n$-dimensional submanifold of $\R^N$, we determine $C^{(\gamma)}$ in the following way: 
take coordinates around some $x_0 \in M$ such that $M$ near $x_0$ is parametrized by
$$\begin{cases}
x_l=x_l,\; &l=1,\ldots,n\\
x_{l}=\varphi_l(x_1,\ldots, x_n), \; &l=n+1,\ldots,N\end{cases},$$
for some functions $\varphi_{N+1},\ldots , \varphi_n$. Writing $x = (x_1,\ldots , x_N)$, we have that
$$H_{\rd^2} (v) = |v|^2 + \sum_{l=n+1}^N \left(\nabla\varphi_l(x)\cdot v\right)^2$$
and 
$$C^j(x,v)=\sum_{l=n+1}^N\sum_{\substack{|\alpha|+|\beta|=j,\\ |\alpha|,|\beta|>0}} \frac{\partial_{x}^{\alpha}\varphi_l(x)\partial_{x}^{\beta}\varphi_l(x)}{\alpha!\beta!} v^{\alpha+\beta}, \quad j>2.$$
This is explained in \cite[Example 2.16]{gimpgofflouc}.

In the special case that $X\subset M=\R^n$ is a domain and $\rd(x,y) = |x-y|$ the Euclidean distance, $q_j = 0$ for $j>0$ and consequently $q = q_0 = (R^2 + |\xi|^2)^{-(n+1)/2}$.

\item \textbf{Calculate terms $a_j$ in the expansion of $a$ at $\xi=0$ for $j \in [0,k]$}

By $ A$ we denote the parametrix to $Q$ in the interior of $X$, and its symbol is denoted by $a$. It has an asymptotic expansion $a \sim \sum_{j} a_j$, and at $\xi=0$, we write $a_{j,0}(x,R):=a_j(x,0,R)$. As described in \cite[Lemma 2.24]{gimpgofflouc}
the first term is given by
$$a_{0,0}(x,R) =\frac{ R^{(n+1)/2}}{n!\omega_n},$$
and for $j>0$ the terms are determined inductively by
\begin{align*}
a_{j,0}(x,R) = 
\begin{cases}
0& \mbox{$j $ odd},\\
-\frac{1}{n!\omega_n}R^{n+1}\sum_{\substack{k+2l+p=j\\2l<j, \, 2|k+p}}i^pq_{k,p}(x,R).\nabla_x^pa_{j-2k,0}(x,R) & \mbox{$j$ even}.
\end{cases}
\end{align*}

In the case that $X$ is a domain in $\R^n$, $a_j = 0$ for $j>0$ since $q=q_0$, thus implying in a similar manner that $a = a_0$. 

In the case that $M$ is an $n$-dimensional submanifold of $\R^N$, we use the same parametrisation by $\varphi_{n+1},\ldots,\varphi_{N}$ as in the previous step, see \cite[Example 2.29]{gimpgofflouc}.

\item \textbf{Calculate the terms $q^\partial_j$ in the expansion of $q^\partial$ near the boundary for $j \in [0,k-1]$}\\ 
By $Q^\partial$ we denote the localization of $Q$ near the boundary of $X$, and we denote its symbol by $q^\partial$, see Subsection \ref{wieninsubses} above or \cite[Proposition 5.11]{gimpgofflouc}. 
It has an asymptotic expansion $q^\partial \sim \sum_j q_j^\partial $ where $q^\partial_j = q_j$ using the coordinates induced by localizing near the boundary.

When $X=M$ is a compact submanifold of $\R^N$ it is immediate that there are no contributions from this operator as the boundary is empty. 

For smooth domains, we construct $q_j^\partial$ via a choice of coordinates as in the pseudocode of Example \ref{domadaindinappb} in Appendix \ref{appendixB}. In the presence of a boundary when $X\subset M=\R^n$, write $x=(x',x_n) \in \R^{n-1}\times \R$ (resp. $\xi=(\xi',\xi_n)$) and determine $Q^\partial$ in the following way: fix a point $x_0$ on the boundary where the normal vector is orthogonal to the plane $x_n=0$. Without loss of generality, this point can be $x_0=0$. Pick a smooth $\varphi$ on the boundary such that $\varphi(x')<x_n$ in a neighborhood of $x_0$ that belongs to $X$. Make a change of coordinates to $(x',x_n)\mapsto (x',x_n - \varphi(x'))$. This procedure is described in more detail in \cite[Example 2.15]{gimpgofflouc}.

In these new coordinates, 
\begin{align*}
g_\rd = H_{\rd^2}^{-1}= \begin{pmatrix}
1_{n-1}&\nabla\varphi(x')\\
\nabla\varphi(x')^T& 1+|\nabla\varphi(x')|^2\end{pmatrix}
\end{align*}
and
\small
\begin{align*}
C^{(\gamma)}(x,-D_\xi)=(-1)^{|\gamma|}\prod_{l=1}^{\mathrm{rk}(\gamma)} \Bigg[&\sum_{|\alpha'|=|\gamma_l|-1} \frac{-2\partial_{x'}^{\alpha'}\varphi(x')}{\alpha'!} D_{\xi'}^{\alpha'}D_{\xi_n}\\
& \quad \quad+\sum_{\substack{|\alpha'|+|\beta'|=|\gamma_l|,\\ |\alpha'|,|\beta'|>0}} \frac{\partial_{x'}^{\alpha'}\varphi(x')\partial_{x'}^{\beta'}\varphi(x')}{\alpha'!\beta'!} D_{\xi'}^{\alpha'+\beta'}\Bigg].
\end{align*}
\normalsize

\item \textbf{Factorize $Q^{\partial}$ near the boundary into terms supported in the upper $(+)$ and lower $(-)$ half planes}

The next thing we have to do is factorize $Q^{\partial}$ as $Q_-Q_+$, where $Q_{\pm}$ is supported in the upper resp. lower half plane. We denote the symbols of $Q_\pm$ by $q_\pm$ and they have an asymptotic expansion $q_\pm  \sim \sum_j q_{\pm,j}^\partial$, see Theorem \ref{whopffaad} above or \cite[Subsection 5.2]{gimpgofflouc}. 
Start by defining
\begin{align*}
q^\partial_{+,0}:=&n!\omega_n h_0^{-\mu}(\xi_n-h_+)^{-\mu},\\
\nonumber
q^\partial_{-,0}:=&(\xi_n-h_-)^{-\mu},
\end{align*}
and construct $q^\partial_{\pm,j}$ inductively by
\begin{align}
\frac{q^\partial_j}{q^\partial_0}-\frac{1}{q^\partial_0}\sum_{\substack{k+l+|\alpha|=j\\k,l<j}}\frac{1}{\alpha!} \partial_\xi^\alpha q_{+,k}^\partial D^\alpha_x q_{-,l}^\partial=\mathfrak{q}_{+,j}+\mathfrak{q}_{-,j},\label{eq:qpm}
\end{align}
$$q^\partial_{\pm,j}:=q^\partial_{\pm,0}\mathfrak{q}_{\pm, j}.$$
Here, the terms $h_0(x)$ is the leading $\xi_n$-term in the metric and $h_\pm(x,\xi',R)$ are the zeros in the $\pm$-plane, that all in all are determined from the equation
$$R^2+ g_\rd(\xi,\xi  ) = h_0(\xi_n - h_+) (\xi_n - h_-),$$

Split the LHS of \eqref{eq:qpm} using a partial fraction decomposition into terms that have denominators that are powers of $(\xi_n - h_+)$ resp. $(\xi_n - h_-)$. The former belong to $\mathfrak{q}_{+,j}$ and the latter to $\mathfrak{q}_{-,j}$. We note here that \cite[Appendix B]{gimpgofflouc} contains general results for these decompositions.

\item \textbf{Calculate the terms $w_{\pm,j}$ in the expansion of $w_{\pm}$ near the boundary for $j \in [0,k-1]$}\\ 
By $W_{\pm}$ we denote the inverse of $Q^\partial_\pm$, see Proposition \ref{invertinthefactorso} above or \cite[Subsection 5.2]{gimpgofflouc}. We denote its symbol by  $w_{\pm} $ and it has an asymptotic expansion (see Proposition \ref{invertinthefactorso}) $w_{\pm} \sim \sum_{j=0}^\infty w_{\pm,j}$ where $w_{\pm,j}$ is constructed inductively by
$$w_{\pm,j}:=-w_{\pm, 0}\sum_{k+l+|\alpha|=j, \, l<j}\frac{1}{\alpha!} \partial_\xi^\alpha q_{\pm, k}^\partial D_x^\alpha w_{\pm,l}$$
with
$$w_{\pm,0}(x,\xi,R):=(q^\partial_{\pm,0})^{-1}=
\begin{cases} 
\frac{1}{n!\omega_n} h_0(x)^\mu(\xi_n-h_+(x,\xi',R))^\mu \;&\mbox{for $+$},\\
{}\\
(\xi_n-h_-(x,\xi',R))^\mu \;&\mbox{for $-$}.\end{cases}$$

\item \textbf{Compute the coefficient of $R^{n+1-k}$}\\
We label this coefficient by $c_k$. It is calculated  by
\begin{align*}
c_{k}=&\int_X a_{k,0}(x,1)\rd x+\int_{\partial X}B_{d^2,k}(x)\rd x'
\end{align*}
where
\begin{align*}
B_{\rd^2,k}&(x'):=\\
&=\sum_{\substack{k=|\beta|+\gamma_n+j+l\\\gamma_n>0}}\frac{i^{|\beta|+|\gamma_n|}(-1)^{|\beta|+1}}{\beta'!(\beta_n+\gamma_n)!}\partial_{x}^{\beta}w_{-,j}(x',0,0,1)\partial_{x_n}^{\gamma_n-1} \partial_\xi^{\beta+(0,\gamma_n)} w_{+,l}(x',0,0,1).
\end{align*}
The details of this calculation are in \cite[Subsection 6.2]{gimpgofflouc}. 

When $X$ has no boundary there are no boundary contributions and consequently $c_k = \int_X a_k \; \rd x$.  

On the other hand, when $X$ is a domain in $\R^n$, if $k>0$ then $c_k = \int_{\partial_X} B_{\rd^2,k}\rd x'$ as the only contribution from the interior comes from $a_0$.

\end{enumerate}

\section{Pseudocode in special examples}
\label{appendixB}

\begin{example}[$X\subset M=\R^n$ with boundary]
\label{domadaindinappb}
\label{ex:domrn}
\leavevmode \\
\textbf{Input}:
\begin{itemize}
\item $n$: dimension of $M$
\item $(x,\xi,R) \in \R^n\times \R^n\times{\R}$
\item $k$: $n+1-k$ is the degree of term to be determined
\item $\varphi(x')$: parametrise near $\partial X$
\end{itemize}
\leavevmode \\
\textbf{Definitions}:
\begin{itemize}
\item $D_t = -i\partial_t$
\item $I_j:=\{\gamma\in \cup_{k=1}^\infty \N^k_{\geq 3}: |\gamma|=j+2k\}$
\item $\mathrm{rk}(\gamma) = $length$(\gamma)$
\item $$\mathfrak{c}_{m,n}:= 
\begin{cases}
(-1)^m(n-2m)!\omega_{n-2m}\omega_{2m}, \; &\mbox{for $2m- n-1<0$}\\
\frac{(-1)^{1-n/2} \omega_{2m}}{(2m-n)!\omega_{2m-n}}, \; & \mbox{for $2m-n-1\in 2\N+1$}\\
\frac{(-1)^{\frac{n+1}{2}}}{(2\pi)^{2m-n}}\omega_{2m}\omega_{2m-n-1} , \; &\mbox{for $2m- n-1\in 2\N$}
\end{cases},$$
\item $a_0 = \frac{1}{n!\omega_n}$, $a_j = 0 \, \forall j>0$
\item 
\smaller $$C^j(x,-D\xi) = (-1)^{|\gamma|}\prod_{l=1}^{\mathrm{rk}(\gamma)} \left[\sum_{|\alpha'|=|\gamma_l|-1} \frac{-2\partial_{x'}^{\alpha'}\varphi(x')}{\alpha'!} D_{\xi'}^{\alpha'}D_{\xi_n}+\sum_{\substack{|\alpha'|+|\beta'|=|\gamma_l|,\\ |\alpha'|,|\beta'|>0}} \frac{\partial_{x'}^{\alpha'}\varphi(x')\partial_{x'}^{\beta'}\varphi(x')}{\alpha'!\beta'!} D_{\xi'}^{\alpha'+\beta'}\right]$$\normalsize
\item $g_{bdy}$: Riemannian metric near the boundary of $X$
\begin{align*}
g_{bdy} = \begin{pmatrix}
1_{n-1}&\nabla\varphi(x')\\
\nabla\varphi(x')^T& 1+|\nabla\varphi(x')|^2\end{pmatrix}
\end{align*}
\item $h_0:=1+|\nabla\varphi(x')|^2$
\item $h_+:=- \frac{\xi' \left(\nabla \varphi (x')\right)}{h_0}+ i\frac{ \sqrt{(R^2 + \xi'^2)h_0  - \left(\xi' \left(\nabla \varphi (x')\right)\right)^2}}{ h_0}$
\item $h_-:=-\frac{\xi' \left(\nabla \varphi (x')\right)}{h_0}- i\frac{ \sqrt{(R^2 + \xi'^2)h_0  - \left(\xi' \left(\nabla \varphi (x')\right)\right)^2}}{ h_0}$
\item $q^\partial_0:=(R^2+ g_{bdy})^{-(n+1)/2}$
\item $q^\partial_{+,0}:=n!\omega_n (\xi_n-h_+)^{-(n+1)/2}$
\item $q^\partial_{+,0}:=h_0^{-(n+1)/2}(\xi_n-h_-)^{-(n+1)/2}$
\item $w_{+,0} := 1/q^\partial_{+,0}$
\item $w_{-,0} := 1/q^\partial_{-,0}$
\end{itemize}
\leavevmode \\
\textbf{Steps to compute $c_k$}:
\begin{enumerate}
\item \label{t:qdj}For $j\in[1,k-1]$ calculate $q_j^\partial$ by
\begin{align*}
q_j^\partial(x,\xi,R)=&\sum_{\gamma\in I_j, \mathrm{rk}(\gamma)<(n+1)/2} \mathfrak{c}_{\mathrm{rk}(\gamma),n}C_{bdy}^{(\gamma)}(x,-D_\xi)(R^2+g_{bdy}(\xi,\xi))^{-(n+1)/2+\mathrm{rk}(\gamma)}+\\
&{ -}\sum_{\gamma\in I_j, \mathrm{rk}(\gamma)\geq (n+1)/2} \mathfrak{c}_{\mathrm{rk}(\gamma),n}C_{bdy}^{(\gamma)}(x,-D_\xi)\\
&\qquad \qquad \qquad \qquad \left[(R^2+g_{bdy}(\xi,\xi))^{-(n+1)/2+\mathrm{rk}(\gamma)}\log(R^2+g_{bdy}(\xi,\xi))\right],
\end{align*}
for $n$ odd, and for $n$ even by
\begin{align*}
q_j^\partial(x,\xi,R)=&\sum_{\gamma\in I_j} \mathfrak{c}_{\mathrm{rk}(\gamma),n}C_{bdy}^{(\gamma)}(x,-D_\xi)(R^2+g_{bdy}(\xi,\xi))^{-(n+1)/2+\mathrm{rk}(\gamma)}.
\end{align*}
\item \label{t:qpm}For $j\in[1,k-1]$ calculate 
\begin{align}
\frac{q^\partial_j}{q^\partial_0}-\frac{1}{q^\partial_0}\sum_{\substack{k+l+|\alpha|=j\\k,l<j}}\frac{1}{\alpha!} \partial_\xi^\alpha q_{+,k}^\partial D^\alpha_x q_{-,l}^\partial. \label{eq:pf}
\end{align}
\item \label{t:pf}Using a partial fraction decomposition, partition the terms of \eqref{eq:pf} into ones including factors of $(\xi_n-h_+)$ and $(\xi_n - h_-)$. Call the sum of the former $\mathfrak{q}_{+,j}$ and the latter $\mathfrak{q}_{+,j}$.
\item \label{t:pfok} Set $q_{\pm,j}^\partial := q_{\pm,0}^\partial\mathfrak{q}_{+,j}$
\item \label{t:w} For $j\in[1,k-1]$ calculate $w_{\pm,j}$ by 
$$w_{\pm,j}=-w_{\pm, 0}\sum_{k+l+|\alpha|=j, \, l<j}\frac{1}{\alpha!} \partial_\xi^\alpha q_{\pm, k}^\partial D_x^\alpha w_{\pm,l}.$$
\item \label{t:bdk}Calculate $B_{\rd^2,k}$ by
\begin{align*}
B_{\rd^2,k}(x'):=\sum_{k=|\beta|+j+l+1}\frac{(-i)^{|\beta|+1}}{\beta'!(\beta_n+1)!}\partial_{x}^{\beta}w_{-,j}(x',0,0,1) \partial_\xi^{\beta+(0,1)} w_{+,l}(x',0,0,1).
\end{align*}
For $k=0$ this is 0.
\item Calculate $c_k$ by 
$$c_{k}(M,\rd)=\int_X a_{k,0}(x,1)\rd x+\int_{\partial X}B_{d^2,k}(x)\rd x'.$$
\end{enumerate}

\end{example}

\begin{example}[$X=M\subset \R^N$ without boundary]

\leavevmode \\
\textbf{Input}:
\begin{itemize}
\item $n$: dimension of $M$
\item $N$: dimension of Euclidean space
\item $(x,\xi,R) \in \R^N\times \R^N\times{\R}$
\item $k$: $n-k$ is the degree of term to be determined
\item $\varphi_l(x_1,\ldots,x_n)$ for $l \in [n+1,N]$: parametrize near a point in $M$
\end{itemize}
\leavevmode \\
\textbf{Definitions}:
\begin{itemize}
\item $D_t = -i\partial_t$
\item $I_j:=\{\gamma\in \cup_{k=1}^\infty \N^k_{\geq 3}: |\gamma|=j+2k\}$
\item $\mathrm{rk}(\gamma) = $length$(\gamma)$
\item $$\mathfrak{c}_{m,n}:= 
\begin{cases}
(-1)^m(n-2m)!\omega_{n-2m}\omega_{2m}, \; &\mbox{for $2m- n-1<0$}\\
\frac{(-1)^{1-n/2} \omega_{2m}}{(2m-n)!\omega_{2m-n}}, \; & \mbox{for $2m-n-1\in 2\N+1$}\\
\frac{(-1)^{\frac{n+1}{2}}}{(2\pi)^{2m-n}}\omega_{2m}\omega_{2m-n-1} , \; &\mbox{for $2m- n-1\in 2\N$}
\end{cases},$$
\item $$C^j(x,v)=\sum_{l=n+1}^N\sum_{\substack{|\alpha|+|\beta|=j,\\ |\alpha|,|\beta|>0}} \frac{\partial_{x}^{\alpha}\varphi_l(x)\partial_{x}^{\beta}\varphi_l(x)}{\alpha!\beta!} v^{\alpha+\beta}.$$
\item $$q_{k,p}(x,R).v:=\sum_{|\alpha|=p}\partial_\xi^\alpha q_k(x,\xi,R)v^\alpha|_{\xi=0}=\partial_t^pq_k(x,tv,R)|_{t=0}.$$
\end{itemize}

\leavevmode \\
\textbf{Steps}:
\begin{enumerate}
\item For $j \in [1,k]$ calculate $q_j$ by 
\begin{align*}
q_j^\partial(x,\xi,R)=&\sum_{\gamma\in I_j, \mathrm{rk}(\gamma)<(n+1)/2} \mathfrak{c}_{\mathrm{rk}(\gamma),n}C_{bdy}^{(\gamma)}(x,-D_\xi)(R^2+g_{bdy}(\xi,\xi))^{-(n+1)/2+\mathrm{rk}(\gamma)}+\\
&{ -}\sum_{\gamma\in I_j, \mathrm{rk}(\gamma)\geq (n+1)/2} \mathfrak{c}_{\mathrm{rk}(\gamma),n}C_{bdy}^{(\gamma)}(x,-D_\xi)\\
& \qquad \qquad \qquad \qquad \left[(R^2+g_{bdy}(\xi,\xi))^{-(n+1)/2+\mathrm{rk}(\gamma)}\log(R^2+g_{bdy}(\xi,\xi))\right],
\end{align*}
for $n$ odd, and for $n$ even by
\begin{align*}
q_j^\partial(x,\xi,R)=&\sum_{\gamma\in I_j} \mathfrak{c}_{\mathrm{rk}(\gamma),n}C_{bdy}^{(\gamma)}(x,-D_\xi)(R^2+g_{bdy}(\xi,\xi))^{-(n+1)/2+\mathrm{rk}(\gamma)}.
\end{align*}
\item For $j \in [1,k]$ calculate $a_j$ by
\begin{align*}
a_{j,0}(x,R) = 
\begin{cases}
0& \mbox{$j $ odd},\\
-\frac{1}{n!\omega_n}R^{n+1}\sum_{\substack{k+2l+p=j\\2l<j, \, 2|k+p}}i^pq_{k,p}(x,R).\nabla_x^pa_{j-2k,0}(x,R) & \mbox{$j$ even}.
\end{cases}
\end{align*} 
\item Calculate $c_k$ by 
$$c_{k}(M,\rd)=\int_X a_{k,0}(x,1)\rd x.$$
\end{enumerate}

\end{example}

\section{Implementation of algorithm for 2-dimensional Euclidean domains}
\label{appecode}

The algorithm from Appendix B was implemented in Python, for the case of Euclidean domains. This was done with the use of the module \texttt{sympy}, a package used in python for symbolic manipulation, and the resulting code for $2$-dimensional domains is available in \cite{code}.

The algorithm computes the coefficient $c_k$ for arbitrary $k$, with two limitations: The dimension $n$ is fixed, and the computational effort and memory  required to compute $c_k$ increase rapidly with $k$. The main computational effort is the partial fraction decomposition, because of the large number of terms involved. 

We have used the code to compute $c_k$ up to $k=4$. We do not expect this code to be used beyond $k=5$, because of the rapid increase in memory required for subsequent $c_k$. As an illustration, the calculation of $q_{\pm,2}$ requires $6$ times more memory than $q_{\pm,1}$, and $q_{\pm,3}$ requires $40$ times more memory than $q_{\pm,2}$. Similarly, for $c_3$ the total number of terms requiring a partial fraction decomposition is approximately $1000$, whereas for $c_4$ this number increases to approximately $110000$.

\end{appendix}

\end{document}